\newtheorem{theorem}{Theorem}[section]
\newtheorem{lemma}[theorem]{Lemma}
\newtheorem{claim}[theorem]{Claim}
\newtheorem{proposition}[theorem]{Proposition}
\newtheorem{corollary}[theorem]{Corollary}
\theoremstyle{definition}
\newtheorem{example}[theorem]{Example}
\newtheorem{remark}[theorem]{Remark}
\def\en{\mathbb N}
\def\er{\mathbb R}
\def\C{\mathcal C}
\newcommand{\uv}[1]{\textquotedblleft #1\textquotedblright}
\begin{document}

\title[Notes on the trace problem]{Notes on the trace problem for separately convex functions}
\author{Ond\v rej Kurka \and Du\v san Pokorn\'y}
\thanks{The authors are junior researchers in the University Centre for Mathematical Modelling, Applied Analysis and Computational Mathematics (MathMAC). The first-named author is a member of the Ne\v{c}as Center for Mathematical Modeling.}
\address{Charles University, Faculty of Mathematics and Physics, Sokolovsk\'a 83, 186 75 Pra\-ha~8, Czech Republic}
\email{kurka.ondrej@seznam.cz, dpokorny@karlin.mff.cuni.cz}

\begin{abstract}
We discuss the following question: For a function $ f $ of two or more variables
which is convex in the directions of coordinate axes, how can its trace
$ g(x) = f(x, x, \dots , x) $ look like? In the two-dimensional case,
we provide some necessary and sufficient conditions,
as well as some examples illustrating that our approach does not seem
to be appropriate for finding a characterization in full generality.
For a concave function $ g $, however, a characterization
in the two-dimensional case is established.
\end{abstract}

\keywords{separately convex function, trace problem}
\subjclass[2010]{26B25}
\maketitle

\section{Introduction}\label{sec1}

We say that a real function $f:\er^d\to\er$ is {\it separately convex} if it is convex on every line parallel to a coordinate axis.
The notion of separate convexity is investigated due to its relationship with the concept of rank-one convexity
(a function $ f $ on the matrix space $ \er ^{d \times d} $ is said to be \emph{rank-one convex} if it is convex on every line with a rank-one direction).
In order to understand rank-one convexity, one can restrict attention to particular subspaces of $ \er ^{d \times d} $ (e.g. the diagonal or the symmetric matrices).
For example, this approach has been employed by S.~Conti, D.~Faraco, F.~Maggi and S.~M\"uller \cite{CFM, CFMM} in the study of the (still open) question whether
the Hessian of a rank-one convex function is a bounded measure. When the rank-one convexity is considered on the subspace of diagonal matrices, the separate convexity naturally appears.

The study of separately convex functions in the theory of non-linear elasticity goes back to L.~Tartar \cite{T} (see also \cite{KMS,LMM,M}).
In the two-dimensional case, the notion of separate convexity coincides with the notion of bi-convexity used in optimization (see e.g. \cite{GPK}), when considered in $ \er \times \er $.
For aspects of the separate convexity in the theory of martingales, see \cite{AH}, and for recent applications in the studies of removable sets for convex functions, see \cite{PR}.

A real function $g$ on $\er$ is said to be the \emph{trace of $f$} on the diagonal if $g(t)=f(t,t,\dots ,t)$ for every $t$.
In \cite{T}, L.~Tartar asks for the precise class of functions $g$ which can be the trace of a separately convex function on $\er^2$.
The same problem is posed in \cite[Question 11]{KMS}.
He remarks that every $\C^2$ function (or even every function semi-convex in the classical sense) can be a trace, at least locally, and also mentions some unpublished observations by V.~\v Sver\'ak and D.~Preiss.
In \cite{CFM}, S.~Conti, D.~Faraco and F.~Maggi construct a function which can be a trace but its second derivative is not a bounded measure.
They also mention another unpublished result by B.~Kirchheim and A.~Lorent, namely that every $\C^{1,\alpha}$ function can be a trace and that not every $\C^1$ function is a trace.

In fact, it turned out during the preparation of this manuscript that many of the included results were obtained but unpublished more than 10 years ago
by B.~Kirchheim, A.~Lorent and L.~Sz\'ekelyhidi.

In the present paper, we prove some partial results on this topic, mostly in the dimension $2$, but also some observations in the general dimension.
The only case in which we were able to obtain a full characterization is the case of concave functions.
We proved the following results.
\begin{theorem} \label{concave}
Let $ g : \mathbb{R} \rightarrow \mathbb{R} $ be a concave function. Then the following assertions are equivalent:

{\rm (i)} There exists a separately convex function $ f : \mathbb{R}^{2} \rightarrow \mathbb{R} $ such that $ f(u, u) = g(u) $ for each $ u \in \mathbb{R} $.

{\rm (ii)} The function
$$ x \; \mapsto \; \int _{0}^{1} \frac{g(x + t) + g(x - t) - 2g(x)}{t^{2}} \, dt $$
is locally bounded from below.
\end{theorem}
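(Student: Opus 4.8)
The plan is to prove the two implications separately, with the measure-theoretic content of (ii) playing the central role in both directions.

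For the direction (i) $\Rightarrow$ (ii), suppose $f$ is separately convex with $f(u,u) = g(u)$. The idea is to extract information about $g$ by comparing values of $f$ along the diagonal with values along horizontal and vertical segments. Fix $x$ and $t > 0$, and consider the four points $(x+t, x-t)$, $(x-t, x+t)$, $(x+t, x+t)$, $(x-t, x-t)$. Separate convexity along horizontal lines gives $f(x, x+t) \le \tfrac{1}{2} f(x-t, x+t) + \tfrac{1}{2} f(x+t, x+t)$ and $f(x, x-t) \le \tfrac{1}{2} f(x-t, x-t) + \tfrac{1}{2} f(x+t, x-t)$; adding these and using convexity along the vertical line through $x$, namely $f(x,x) \le \tfrac12 f(x,x+t) + \tfrac12 f(x,x-t)$, one obtains after rearranging
$$
g(x+t) + g(x-t) - 2g(x) \ge -\bigl[\,f(x+t, x-t) - g(x+t)\,\bigr] - \bigl[\,f(x-t, x+t) - g(x-t)\,\bigr] + (\text{nonneg. terms}).
$$
The off-diagonal deficits $f(x+t,x-t) - g(x+t)$ need to be controlled: here one uses that a separately convex function is locally Lipschitz (a standard fact, following from separate convexity plus local boundedness, which one gets from convexity in each variable), so these deficits are $O(t)$ locally uniformly in $x$. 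Dividing by $t^2$ and integrating over $t \in (0,1)$, the $O(t)/t^2 = O(1/t)$ bound is not integrable, so this crude estimate must be refined: the correct bound is that the off-diagonal deficit is $o(t)$ or, more precisely, one should telescope along a dyadic scale or integrate by parts so that only a bound of the form $\int_0^1 (\text{deficit})/t^2\,dt \ge -C$ survives. I expect the honest argument to use that along the horizontal segment from $(x, x\pm t)$ the one-sided derivatives of $f$ are monotone, giving an integrable comparison. The conclusion is that the displayed integral is bounded below on compact sets by a constant depending only on the Lipschitz constant of $f$ there.

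For the direction (ii) $\Rightarrow$ (i), one must construct a separately convex $f$ with the given diagonal trace. The natural ansatz, going back to the observations attributed to Preiss and \v{S}ver\'ak, is to build $f$ on $\er^2$ by a formula that is manifestly convex in each variable and whose restriction to the diagonal reconstructs $g$. A promising candidate is something like
$$
f(x, y) = g\!\left(\tfrac{x+y}{2}\right) + h(x,y),
$$
where $h$ is chosen to compensate for the fact that $g(\tfrac{x+y}{2})$ need not be separately convex, using a convex correction whose size is governed exactly by the quantity $\int_0^1 \frac{g(x+t)+g(x-t)-2g(x)}{t^2}\,dt$. Concretely one expects an integral representation such as
$$
f(x,y) = \sup_{\text{affine } \ell \le g}\bigl\{ \ell(\text{diagonal part}) \bigr\} + \int (\text{correction kernel})(x,y,t)\, dt,
$$
chosen so that convexity in $x$ and in $y$ can be checked directly (e.g. the second difference in $x$ of $f$ is a nonnegative combination of the integrand in (ii) and the concavity defect $2g(\cdot) - g(\cdot+t) - g(\cdot-t) \ge 0$, which is nonnegative because $g$ is concave). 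The hypothesis that the integral is locally bounded below is precisely what makes this correction term finite-valued.

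The main obstacle, and the step I would expect to occupy most of the work, is the construction in (ii) $\Rightarrow$ (i): finding the right explicit formula for $f$ and verifying separate convexity rigorously. The concavity of $g$ is essential and should be exploited — it makes the second differences $g(x+t)+g(x-t)-2g(x)$ have a fixed sign, so that the integral in (ii) is a (signed but sign-definite) measure of how far $g$ is from being "twice differentiable," and a construction matching this exactly is plausible. Checking (i) $\Rightarrow$ (ii) is more routine once the local Lipschitz property of separately convex functions is invoked, but getting the integrability of the error term right (rather than the naive $O(1/t)$ bound) will require a little care with monotonicity of one-sided derivatives.
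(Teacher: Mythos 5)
Your plan identifies the right two-step structure, but both halves have real gaps.

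For $\mathrm{(i)} \Rightarrow \mathrm{(ii)}$: the three inequalities you combine give, after rearranging,
$$
g(x+t) + g(x-t) - 2g(x) \;\geq\; -\bigl[f(x+t,x-t) - g(x)\bigr] - \bigl[f(x-t,x+t) - g(x)\bigr],
$$
and you correctly observe that the local Lipschitz bound on the anti-diagonal deficits is only $O(t)$, which is not integrable against $dt/t^{2}$. The refinement you gesture at (``telescope along a dyadic scale or integrate by parts'') is indeed the missing step, but it is not a cosmetic fix: the paper introduces the anti-diagonal function $\beta(t) = f(x+t,x-t)+f(x-t,x+t)-2g(x)$, establishes a \emph{differential inequality} $\delta'(r) \geq -\gamma(r)/r$ relating $\delta(t) = \beta(t)/t$ to $\gamma(t) = \omega_{g}(x,t)/t$ (via the chain of inequalities (\ref{vypocet1})--(\ref{interval})), and closes the argument with the nontrivial boundary condition $\liminf_{t\to 0+} \delta(t) \geq 0$ from Lemma~\ref{sverak}. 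Without Lemma~\ref{sverak} the integrated inequality gives nothing, and nothing in your sketch supplies an analogue of it.

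For $\mathrm{(ii)} \Rightarrow \mathrm{(i)}$: your ansatz $f(x,y) = g\bigl(\tfrac{x+y}{2}\bigr) + h(x,y)$ is not what the paper does and is unlikely to work as stated. The function $g\bigl(\tfrac{x+y}{2}\bigr)$ is concave in each variable separately, so the entire burden of separate convexity falls on $h$, which must in addition vanish on the diagonal; you give no concrete formula and no mechanism by which hypothesis (ii) controls $h$. The paper's construction is of a genuinely different kind: at each differentiability point $u$ of $g$ in a dense set, it builds a parabolic lower envelope $\alpha_{u}$ of $t \mapsto g(u+t)-g(u)-g'(u)t$ (Lemma~\ref{envelope}), extends $\alpha_{u}$ to a separately convex function $F_{u}$ on $\mathbb{R}^{2}$ with explicitly controlled growth using the formula of Lemma~\ref{extension} (where $\beta_{u}(x) = -x\int_{0}^{x}\alpha_{u}(t)t^{-2}\,dt$ prescribes the anti-diagonal), translates these to touch $g$ from below at $u$, and takes the supremum over $u$. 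Finiteness of the supremum is exactly what (ii) delivers, through the delicate quantitative estimate of Lemma~\ref{envelope} comparing $\int \alpha_{u}/t^{2}$ with $\int \varphi_{u}/t^{2}$. Concavity enters only at the very end (Corollary~\ref{semiconcave}), to bound the one-sided quantity $\varphi_{u}(x)$ below by the symmetric second difference $\omega_{g}(u,x)$. Your plan does not contain the touching-and-supremum mechanism, the explicit extension lemma, or the envelope estimate that makes the supremum finite, and these are the substantive content of the proof; the single sentence about a ``correction kernel'' does not replace them.
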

The argument is based on a general extension result (see Theorem~\ref{sufficientthm}) which provides us moreover with a sufficient condition in the framework of semi-convex functions (with a general modulus).
\begin{theorem} \label{semiconvex}
Let a function $ g : \mathbb{R} \rightarrow \mathbb{R} $ be semi-convex with a modulus $ \omega $ such that
$$ \int _{0}^{1} \frac{\omega (t)}{t} \; dt < \infty. $$ 
Then there exists a separately convex function $ f : \mathbb{R}^{2} \rightarrow \mathbb{R} $ such that $ f(u, u) = g(u) $ for each $ u \in \mathbb{R} $.
\end{theorem}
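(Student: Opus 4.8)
The plan, following the strategy announced above, is to deduce Theorem~\ref{semiconvex} from the general extension result Theorem~\ref{sufficientthm}; the only point that needs to be checked is that a semi-convex $ g $ satisfying the Dini-type condition $ \int_{0}^{1}\omega(t)/t\,dt<\infty $ meets the hypotheses of that theorem, and this is where the assumption on $ \omega $ is used. Recall that $ g $ being semi-convex with modulus $ \omega $ (nondecreasing, $ \omega(0+)=0 $) means
$$ g(\lambda x + (1-\lambda)y)\ \le\ \lambda g(x) + (1-\lambda)g(y) + \lambda(1-\lambda)\,|x-y|\,\omega(|x-y|) $$
for all $ x,y\in\mathbb{R} $ and $ \lambda\in[0,1] $; in particular $ g $ is continuous (in fact locally Lipschitz, as is standard for semi-convex functions). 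Since the natural hypothesis of an extension theorem of this kind — as Theorem~\ref{concave}(ii) strongly suggests — is that the \emph{potential}
$$ P(x)\ :=\ \int_{0}^{1}\frac{g(x+t)+g(x-t)-2g(x)}{t^{2}}\,dt $$
be locally bounded from below, the whole proof comes down to bounding $ P $ below in terms of $ \omega $.

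Here is the estimate I would carry out. Putting $ \lambda=\tfrac12 $, $ x=z+t $, $ y=z-t $ (so that $ \lambda x+(1-\lambda)y=z $ and $ |x-y|=2t $) into the semi-convexity inequality and rearranging gives the centred second-difference bound
$$ g(z+t)+g(z-t)-2g(z)\ \ge\ -\,t\,\omega(2t),\qquad z\in\mathbb{R},\ t>0. $$
Dividing by $ t^{2} $ and integrating over $ (0,1) $ yields, for every $ z\in\mathbb{R} $,
$$ P(z)\ \ge\ -\int_{0}^{1}\frac{\omega(2t)}{t}\,dt\ =\ -\int_{0}^{2}\frac{\omega(s)}{s}\,ds\ >\ -\infty, $$
the finiteness of the last integral being immediate from the hypothesis (the contribution of $ [1,2] $ is at most $ \omega(2)\log 2 $ since $ \omega $ is nondecreasing). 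Hence $ P $ is bounded from below on all of $ \mathbb{R} $, a fortiori locally, so $ g $ — continuous, with $ P $ locally bounded below — is admissible for Theorem~\ref{sufficientthm}, which then produces a separately convex $ f\colon\mathbb{R}^{2}\to\mathbb{R} $ with $ f(u,u)=g(u) $. That would complete the argument.

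The real work is thus hidden in Theorem~\ref{sufficientthm}, and that is where I would expect the genuine obstacle to be. For a self-contained proof one would construct $ f $ directly by a multi-scale (dyadic) argument: after subtracting off an automatically admissible convex term $ g_{\mathrm{cvx}}\!\left(\tfrac{x+y}{2}\right) $, decompose the remainder into layers living at scales $ 2^{-k} $, each of which is semi-convex with modulus comparable to $ \omega(2^{-k}) $ at its own scale and hence can be extended, up to a bounded error of order $ \omega(2^{-k}) $, by a bilinear-type term such as $ -c_{k}\,xy $; then sum, the series converging exactly because $ \sum_{k}\omega(2^{-k})\asymp\int_{0}^{1}\omega(t)/t\,dt<\infty $. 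The delicate points would be arranging that the traces of the layers telescope precisely to $ g $, keeping every partial construction separately convex on the whole plane and not merely near the diagonal, and controlling the accumulated errors; compared with those, the reduction carried out above is routine.
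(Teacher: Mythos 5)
The reduction you carry out is not quite correct: you guess that the hypothesis of the general extension theorem (Theorem~\ref{sufficientthm}) is that the symmetric potential $ P(x)=\int_{0}^{1}\omega_{g}(x,t)\,t^{-2}\,dt $ be locally bounded from below, and you then verify this via the symmetric estimate $ g(z+t)+g(z-t)-2g(z)\ge -t\,\omega(2t) $. But that is not the condition Theorem~\ref{sufficientthm} actually asks for. Its hypothesis is stated in terms of the \emph{one-sided} deviation from the tangent line,
$$ \varphi_{u}(x)=\min\bigl\{\,g(u+t)-g(u)-g'(u)t : |t|\le x\,\bigr\}, $$
and requires that $ u\mapsto\int_{0}^{\varepsilon}\varphi_{u}(x)\,x^{-2}\,dx $ be bounded on bounded sets of differentiability points. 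This is genuinely stronger than a lower bound on $ P $: the second difference $ \omega_{g}(u,t)=\bigl(g(u+t)-g(u)-g'(u)t\bigr)+\bigl(g(u-t)-g(u)+g'(u)t\bigr) $ can be small even when one of the two summands is very negative and the other is positive, so controlling $ P $ gives no control on $ \varphi_{u} $ in general. (Indeed, the paper's Example~\ref{example2} exhibits a function for which the symmetric condition holds, yet which is not a trace, so ``\,$ P $ locally bounded below\,'' cannot be the hypothesis of the general extension theorem.) The equivalence $ \varphi_{u}(x)\gtrsim\omega_{g}(u,x) $ that makes the $ P $--condition work is a special feature of the \emph{concave} case (Corollary~\ref{semiconcave}), because there the minimum defining $ \varphi_{u} $ is attained at $ t=\pm x $ and one can absorb the favourable side.

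The fix is to use semi-convexity to bound $ \varphi_{u} $ directly rather than $ P $. Writing $ \Omega(t)=\int_{0}^{t}\omega(s)\,ds $, one has (this is the content of \cite[Proposition~2.8]{DZ2}, and is what the paper invokes)
$$ g(z+t)-g(z)-t\,g'_{+}(z)\ \ge\ -2\,\Omega(|t|),\qquad z,t\in\mathbb{R}, $$
which immediately gives $ \varphi_{u}(x)\ge -2\Omega(x) $ at every differentiability point $ u $. An integration by parts shows that the Dini condition on $ \omega $ is exactly what makes $ \int_{0}^{1}\Omega(x)\,x^{-2}\,dx $ finite, so the hypothesis of Theorem~\ref{sufficientthm} is verified with a bound uniform in $ u $. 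This one-sided inequality is not a consequence of the symmetric estimate you derive (which is a two-point, $ \lambda=\tfrac12 $ statement); proving it from the definition requires a multi-point or telescoping use of semi-convexity at all scales, precisely the kind of argument you speculate about in your last paragraph. Your closing remark that ``the real work is hidden in Theorem~\ref{sufficientthm}'' is right in spirit, but it conceals a concrete gap in your own reduction: you never check the hypothesis that theorem actually imposes.
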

\noindent
These results are special cases of Corollaries~\ref{semiconcave} and \ref{locsemiconvex}.

In fact, the implication $ \mathrm{(i)} \Rightarrow \mathrm{(ii)} $ holds without the assumption for $g$ to be concave, as follows from Proposition~\ref{propdiff} and Remark~\ref{remint}.
A stronger necessary condition is formulated in Theorem~\ref{necess2}.
However, the implication $ \mathrm{(ii)} \Rightarrow \mathrm{(i)} $ does not hold in general and also the property of being a trace cannot be characterized solely in the terms of semi-convexity (cf. Example~\ref{example2} and Example~\ref{example3}).

The paper is structured as follows. After introducing notation and basic facts, we start with some simple results in general dimension. In Section~\ref{sec2}, we discuss the relationship between local and global extendibility, showing that they are essentially the same. Section~\ref{sec3} contains a modest necessary condition for a function to be the trace of a separately convex function on $ \er^{d} $.

A major part of the paper, however, is devoted to the two-dimensional case. In Section~\ref{sec4}, we begin by studying of necessary conditions, namely Proposition~\ref{propdiff}, Theorem~\ref{necess2} and Remark~\ref{remmostgeneral}. Basic tools for our constructions are provided in Section~\ref{sec5}. We formulate two extension Lemmata~\ref{extension} and~\ref{smoothcase2}, the first of which was inspired by the necessary conditions from the previous section. In fact, both of them turned out to be consequences of a general extension result (see Lemma~\ref{extensionasym}). Section~\ref{sec6} contains a technical envelope-like lemma allowing us to apply our extension lemmata to more general functions.

Finally, the main results, Theorems~\ref{concave} and~\ref{semiconvex}, are proven in Section~\ref{sec7}, both of them derived from a general extension theorem (see Theorem~\ref{sufficientthm}). In Section~\ref{sec007}, we present a method that allows us to find a separately convex extension for a very specific but in some sense broad class of traces which do not satisfy the assumptions of Theorem~\ref{sufficientthm}.

We conclude the paper with four examples illustrating the limitations of our methods. These are included in~Section~\ref{sec8}. In particular, we observe that there is a trace of a separately convex function which does not have a one-sided derivative at some point. Note that both main results above provide a separately convex extension only for functions with one-sided derivatives.

The authors are grateful to Bernd Kirchheim for fruitful discussions on the topic, especially on the background of studies of separately convex functions. The authors also thank to Lud\v{e}k Zaj\'i\v{c}ek for valuable discussions on semi-convex functions.

\begin{figure}[t]
\psset{unit=0.333mm}
\begin{pspicture}(-12,-12)(232,112)
\psset{linecolor=gray}
\psset{linewidth=0.6pt}
\psline(-10,0)(230,0)
\psline(0,-10)(0,100)
\psset{linecolor=black}
\psset{linewidth=1pt}
\psline(216,0)(230,0)
\parabola(216,0)(144,72)
\parabola(72,0)(48,24)
\parabola(24,0)(16,8)
\parabola(8,0)(5.33,2.67)
\parabola(2.67,0)(1.77,0.89)
\parabola(0.89,0)(0.59,0.30)
\psline(-10,0)(0.29,0)
\end{pspicture}
\caption{A picture of a trace without one-sided derivative at some point (Example~\ref{example3}).}
\label{nondiff}
\end{figure}
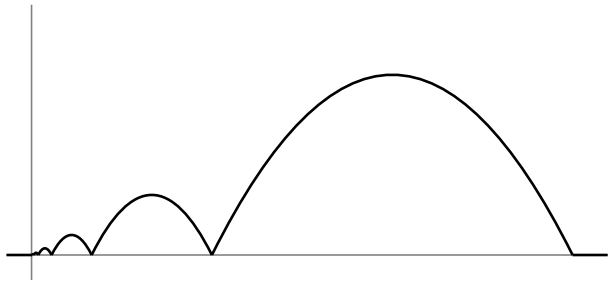

\section*{Notation and basic facts}

In many places, we use the fact that every separately convex function on $\er^d$ is necessarily locally Lipschitz.
In particular, its trace is also locally Lipschitz.

Let $M$ be a convex set and let $f$ be a real function defined on a super set of $M$. We say that the function $f$ is {\it semi-convex with modulus $\omega$ on $M$} if  
$$
f(\alpha x+(1-\alpha)y)\leq \alpha f(x)+(1-\alpha)f(y)+\alpha(1-\alpha)\Vert x-y \Vert \omega(\Vert x-y \Vert )
$$
for every $x,y\in M$ and every $\alpha\in(0,1)$.
A function semi-convex with a linear modulus will be called just semi-convex.
Semi-concave functions are then defined in a similar manner.

For a real function $f$ on $\er$, we define its {\it second order central difference at a point $x$} by $\omega_f(x,t):=f(x+t)+f(x-t)-2f(x)$.
Its modification $\omega_{f}^*(x,t)$ is defined for $ t \geq 0 $ so that $ \omega_{f}^*(x,\cdot) $ is the greatest non-increasing minorant of $ \omega_{f}(x,\cdot) $ on $ [0, \infty) $.

The following observation about traces of separately convex functions appeared in \cite{PR}.

\begin{lemma}\label{sverak}
Let $f:\er^2\to\er$ be a separately convex function. Define $g:\er\to\er$ by $g(t)=f(t,t).$
Then 
$$
 \liminf_{t\to 0+} \frac{g(x+t)+g(x-t)-2g(x)}{t}\geq 0
$$ 
for every $x.$
\end{lemma}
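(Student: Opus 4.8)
The plan is to exploit separate convexity of $f$ directly, without appealing to any structural description of $f$. For a fixed $x$ and small $t>0$, I would try to estimate $g(x+t)+g(x-t)-2g(x) = f(x+t,x+t)+f(x-t,x-t)-2f(x,x)$ by inserting intermediate points whose coordinates lie on lines parallel to the axes, so that convexity in each variable can be applied one coordinate at a time. The natural candidates are the "corner" points $(x+t,x-t)$ and $(x-t,x+t)$: moving from $(x,x)$ to $(x\pm t,x\pm t)$ can be decomposed through these corners. Concretely, convexity in the first variable on the horizontal line through height $x-t$ gives a bound relating $f(x,x-t)$ to $f(x+t,x-t)$ and $f(x-t,x-t)$, and similarly on the line through height $x+t$; then convexity in the second variable on the vertical lines through $x$ recombines these with $f(x,x)$.

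The key step is to show that
\[
g(x+t)+g(x-t)-2g(x) \;\geq\; -\bigl[f(x+t,x-t)+f(x-t,x+t)-2f(x,x)\bigr]
\]
(or an analogous inequality with an error term), and then to observe that the right-hand side is $o(t)$ as $t\to 0+$. The first point follows by writing, via midpoint convexity in each coordinate separately,
\[
2f(x,x)\le f(x+t,x)+f(x-t,x)\le \tfrac12\bigl[f(x+t,x+t)+f(x+t,x-t)+f(x-t,x+t)+f(x-t,x-t)\bigr],
\]
so that
\[
f(x+t,x+t)+f(x-t,x-t) - 2f(x,x) \;\ge\; -\bigl[f(x+t,x-t)+f(x-t,x+t)-2f(x,x)\bigr],
\]
which is exactly the displayed inequality. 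For the second point, I would use that $f$ is locally Lipschitz (stated in the Notation section as a basic fact about separately convex functions): near $(x,x)$ the function $f$ is $L$-Lipschitz, and both $(x+t,x-t)$ and $(x-t,x+t)$ lie within distance $O(t)$ of $(x,x)$, hence $f(x+t,x-t)+f(x-t,x+t)-2f(x,x) \le 2L\sqrt{2}\,t + 2L\sqrt{2}\,t = O(t)$. Dividing the chain of inequalities by $t$ and letting $t\to 0+$ then yields $\liminf_{t\to 0+} t^{-1}\bigl(g(x+t)+g(x-t)-2g(x)\bigr) \ge \liminf_{t\to 0+}\bigl(-O(t)/t\bigr)$, and since the error is genuinely $o(t)$ the right-hand side is $0$.

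I do not expect a serious obstacle here; the argument is short once the right four auxiliary points are chosen. The only mild subtlety is making sure the Lipschitz estimate for the error term is genuinely $o(t)$ rather than merely $O(t)$: a crude Lipschitz bound only gives $O(t)$, which would yield $\liminf \ge -C$ instead of $\ge 0$. To fix this, I would instead bound $f(x+t,x-t)+f(x-t,x+t)-2f(x,x)$ using the convexity of $f$ along the two diagonal-type broken paths joining $(x+t,x-t)$ and $(x-t,x+t)$ through $(x,x)$: moving from $(x,x)$ horizontally then vertically to each corner, one-variable convexity (in the form that a convex function's increments are monotone) shows this quantity is dominated by $\omega_{g}$-type expressions at nearby points that themselves tend to $0$, or alternatively one uses the existence of the one-sided partial derivatives of $f$ (a consequence of one-variable convexity) to get the second-order difference along each coordinate direction is $o(t)$. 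Thus the error is $o(t)$ and the claimed liminf inequality follows.
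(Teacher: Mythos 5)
Your first step is correct: convexity in each coordinate, applied through the two midpoints $(x\pm t,x)$, does give
\[
g(x+t)+g(x-t)-2g(x)\;\geq\;-\bigl[f(x+t,x-t)+f(x-t,x+t)-2f(x,x)\bigr].
\]
But the rest of the argument has a genuine gap, and it is precisely the one you half-suspect at the end. Writing $B(t)=f(x+t,x-t)+f(x-t,x+t)-2f(x,x)$, what you need is $\limsup_{t\to 0+}B(t)/t\leq 0$, and this is \emph{false} for separately convex functions. Take $f(a,b)=|a-b|$ (convex in each variable separately) and $x=0$: then $B(t)=4t$, so $B(t)/t\equiv 4$, and your inequality only yields $g(x+t)+g(x-t)-2g(x)\geq-4t$, hence a liminf bound of $-4$, not $0$. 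The same example explains why neither of your proposed fixes can work. The one-coordinate symmetric second difference of a convex function $\phi$ satisfies $\phi(a+t)+\phi(a-t)-2\phi(a)\sim t\bigl(\phi'_+(a)-\phi'_-(a)\bigr)$, which is $\Theta(t)$, not $o(t)$, at a kink; and the one-sided partial derivatives of a separately convex $f$ are not continuous in the base point (for $f(a,b)=|a-b|$, $D^{+}_{2}f(a,0)$ jumps from $-1$ to $+1$ at $a=0$), so the broken-path estimate gives $B(t)=O(t)$ and nothing better.

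The structural problem is that the single-scale inequality $A(t)+B(t)\geq 0$, where $A(t)=g(x+t)+g(x-t)-2g(x)$, tells you nothing about the sign of $A(t)$ alone. A correct proof must compare behaviour at two scales. One route, used in the proof of the (weaker) Proposition~\ref{cord} for $d=2$, replaces $f$ by its positively $1$-homogeneous blow-up at the point and derives a contradiction from convexity of the blow-up along the line $\{y=1\}$, comparing the midpoint value at $(0,1)$ with the values at $(\pm 1,1)$ together with the diagonal values at $(\pm 1,\pm 1)$. Another route is the two-scale relation \eqref{gammadelta} of Section~\ref{sec4}, which compares $\gamma(s)=\alpha(s)/s$ and $\delta(s)=\beta(s)/s$ at radii $s<r$ and is derived from separate convexity alone; a short bootstrap on that inequality yields $\liminf_{s\to 0+}\gamma(s)\geq 0$, which is the assertion of the lemma. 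Either way, some comparison across scales is needed, and that is the ingredient your argument is missing.
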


We will also use the following more or less standard notation. 
For a function $ f $ defined on (a subset of) $\er^d$, the symbol $D_{v}^+f(x)$ means the upper one-sided derivative in a direction $v\in S^{d-1}$ of $f$ at a point $x$.

\section{Coincidence of local and global extendibility}\label{sec2}

\begin{lemma} \label{smoothcase}
Let $ d \geq 2 $. Then, for every $ G \in \C^{2}(\mathbb{R}) $, there is a separately convex function $ F : \mathbb{R}^{d} \rightarrow \mathbb{R} $ such that $ F(t, t, \dots , t) = G(t) $ for each $ t \in \mathbb{R} $.
\end{lemma}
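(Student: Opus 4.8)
The plan is to reduce to two variables, split $G$ as a difference of two convex $\C^2$ functions, dispatch the convex part by a trivial formula, and realise the remaining concave part as a trace via the characterisation of concave traces.

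\textbf{Reduction to $d=2$.} If $F\colon\mathbb{R}^2\to\mathbb{R}$ is separately convex with $F(t,t)=G(t)$ for every $t$, then $\widetilde F(x_1,\dots,x_d):=F(x_1,x_2)$ is separately convex on $\mathbb{R}^d$ — convex in each of the first two variables, constant in the remaining $d-2$ — and satisfies $\widetilde F(t,\dots,t)=G(t)$. So it suffices to treat $d=2$. (For context, $G\in\C^2$ is locally semi-convex with a linear modulus, so the statement is ultimately subsumed by the results of Section~\ref{sec7}; the point of having it here is to have an elementary building block available earlier.)

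\textbf{The local model and a DC splitting.} For $C\ge 0$ the function $(x,y)\mapsto\tfrac12\bigl(G(x)+G(y)\bigr)+C(x-y)^2$ has trace $G$ and second $x$-derivative $\tfrac12 G''(x)+2C$, hence is separately convex wherever $|G''|\le 4C$; this settles the statement on bounded intervals but not globally when $G''$ is unbounded below. To globalise, I would write $G=P-Q$ with $P,Q\in\C^2(\mathbb{R})$ both convex, taking e.g. $Q(x):=\int_0^x(x-s)\max\{0,-G''(s)\}\,ds$, so that $Q''=\max\{0,-G''\}\ge 0$ and $P''=G''+Q''=\max\{G'',0\}\ge 0$. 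Then $F_1(x,y):=P\bigl(\tfrac{x+y}{2}\bigr)$ is separately convex, since $\partial_{xx}F_1=\tfrac14 P''\bigl(\tfrac{x+y}{2}\bigr)\ge 0$, and has trace $P$; so it remains to find a separately convex $F_2$ with $F_2(t,t)=-Q(t)$, i.e. to realise the concave $\C^2$ function $-Q$ as a trace.

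\textbf{Realising the concave part.} Since $-Q\in\C^2$, one has $\omega_{-Q}(x,t)=-\bigl(Q(x+t)+Q(x-t)-2Q(x)\bigr)\ge -t^2\sup_{[x-1,x+1]}|Q''|$ for $0\le t\le 1$, so the map $x\mapsto\int_0^1\omega_{-Q}(x,t)\,t^{-2}\,dt$ is locally bounded from below; by Theorem~\ref{concave} the required $F_2$ exists, and $F:=F_1+F_2$ is separately convex with trace $P-Q=G$.

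\textbf{Where the difficulty sits.} The substantive content has all been pushed into the concave case, and there the obvious corrections fail: a constant-coefficient term $C(x-y)^2$ compensates only a bounded deficit of $G''$, and a single power $|x-y|^{2-\delta}$ (resp. $|x-y|^{2+\delta}$) helps only near the diagonal (resp. only far from it). What is needed is a position-dependent, saddle-type correction growing fast enough off the diagonal to dominate $|G''(x)|$ while preserving separate convexity — precisely the mechanism behind the extension lemmata of Sections~\ref{sec5} and \ref{sec6}. If one prefers to avoid the forward reference to Theorem~\ref{concave}, the alternative is to construct, for each $t_0$, such a separately convex \uv{model} $P_{t_0}$ with $P_{t_0}(t_0,t_0)=G(t_0)$ and $P_{t_0}(t,t)\le G(t)$ for all $t$, and set $F:=\sup_{t_0}P_{t_0}$: a (locally bounded) supremum of separately convex functions is separately convex, and the two inequalities force $F(t,t)=G(t)$. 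Either way, building these corrections is the crux.
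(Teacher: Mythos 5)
Your reduction to $d=2$ is exactly the paper's, and the DC splitting $G=P-Q$ with $P,Q$ convex, disposing of the convex part via $F_1(x,y)=P\bigl(\tfrac{x+y}{2}\bigr)$, is a clean way to isolate the difficulty in the concave case. However, your main route is circular in the paper's logical structure. You invoke Theorem~\ref{concave} to handle $-Q$. But Theorem~\ref{concave} is a case of Corollary~\ref{semiconcave}, whose proof feeds through Theorem~\ref{sufficientthm}, which in turn globalises via Proposition~\ref{locglob} --- and Proposition~\ref{locglob} uses Lemma~\ref{smoothcase} (the very statement you are trying to prove) both in its auxiliary claim and in its main argument. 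Moreover Corollary~\ref{semiconcave} uses Lemma~\ref{smoothcase2}, which \emph{is} the $d=2$ content of Lemma~\ref{smoothcase}. So the forward reference does not merely run against expository order: it closes a genuine dependency loop.

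Your closing paragraph acknowledges this and proposes a supremum of ``model'' functions $P_{t_0}$, but you never construct them, and you candidly state that ``building these corrections is the crux.'' That is precisely the content of the paper's Lemma~\ref{smoothcase2}, which is self-contained: it builds the position-dependent off-diagonal correction through the auxiliary functions $\theta$, $\eta$, $\beta$ and then applies Lemma~\ref{extensionasym}, whose proof rests only on the elementary Claim~\ref{extensionasymclaim}. To repair your proposal you must either (i) carry out the model construction you sketch, reproducing in effect the work of Lemma~\ref{smoothcase2}, or (ii) redirect the reduction to appeal only to Lemma~\ref{extensionasym} and not to anything downstream of Proposition~\ref{locglob}. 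As written, the proof is incomplete.
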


\begin{proof}
We will see later how to find an extension $ F_{0} $ for $ d = 2 $ (Lemma~\ref{smoothcase2}). For a general $ d $, it is sufficient to consider the function $ F(x_{1}, x_{2}, \dots , x_{d}) = F_{0}(x_{1}, x_{2}) $.
\end{proof}

\begin{proposition} \label{locglob}
Let $ d \geq 2 $ and let $ g : \mathbb{R} \rightarrow \mathbb{R} $ be a locally Lipschitz function. Assume that, for every bounded interval $ [a, b] $, there is a separately convex function $ f_{[a, b]} : \mathbb{R}^{d} \rightarrow \mathbb{R} $ such that $ f_{[a, b]}(u, u, \dots , u) = g(u) $ for each $ u \in [a, b] $. Then there is a separately convex function $ f : \mathbb{R}^{d} \rightarrow \mathbb{R} $ such that $ f(u, u, \dots , u) = g(u) $ for each $ u \in \mathbb{R} $.
\end{proposition}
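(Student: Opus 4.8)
The plan is to glue together the local extensions $f_{[a,b]}$ on a sequence of nested intervals, correcting each new piece by an affine function (which preserves separate convexity) so that it agrees with the previously constructed function on the overlap, and then passing to a limit. Concretely, fix an increasing sequence of integers and set $I_n = [-n, n]$. Let $h_n = f_{I_n}$ be the given separately convex function with $h_n(u,\dots,u) = g(u)$ for $u \in I_n$. I would build inductively a sequence of separately convex functions $F_n : \mathbb{R}^d \to \mathbb{R}$ such that $F_n(u,\dots,u) = g(u)$ for $u \in I_n$ and such that $F_{n+1} = F_n$ on $I_n^d$ (or at least on a slightly smaller cube), so that the pointwise limit $f = \lim_n F_n$ is well-defined and separately convex.

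The key step is the inductive one: given $F_n$ (separately convex, matching $g$ on the diagonal over $I_n$), produce $F_{n+1}$. Here I would take the function $H = F_{n+1}$'s candidate to be built from $h_{n+1}$, but $h_{n+1}$ need not agree with $F_n$ away from the diagonal. The trick is a standard one for separately convex functions: the maximum of two separately convex functions is separately convex, and adding an affine function preserves separate convexity. I would first note that on the diagonal both $F_n$ and $h_{n+1}$ equal $g$ over $I_n$, and then choose a large constant $C_n$ (depending on the Lipschitz bounds and the diameter of the cubes involved) and define something like $F_{n+1} = \max\{F_n, \; h_{n+1} + \ell_n\}$ where $\ell_n$ is a suitable affine function, after first modifying $F_n$ outside $I_n^d$ to push it down. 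Actually the cleaner route: modify $F_n$ first. Since $F_n$ is separately convex and locally Lipschitz, for a suitable large affine-in-each-variable lower bound I can replace $F_n$ by $\widetilde F_n = \max\{F_n, A_n\}$ where $A_n$ is chosen so that $\widetilde F_n = F_n$ on $I_n^d$ but $\widetilde F_n$ is dominated by $h_{n+1} + (\text{large affine})$ outside; symmetrically push $h_{n+1}$ up by an affine function vanishing appropriately on the diagonal — but an affine function constant on the diagonal direction, i.e. of the form $x \mapsto \langle b, x\rangle$ with $\langle b, (1,\dots,1)\rangle = 0$, so that adding it does not disturb the trace. With such a $b$ of large norm, $h_{n+1}(x) + \langle b, x\rangle$ is large away from the diagonal (on the boundary of $I_n^d$) yet still equals $g$ on the diagonal; taking the max with $F_n$ then yields $F_{n+1}$ agreeing with $F_n$ on $I_{n-1}^d$ (say) and with $g$ on the diagonal over $I_{n+1}$. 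Iterating and shrinking the cube of agreement by one step each time still exhausts $\mathbb{R}^d$.

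The main obstacle I anticipate is making the "choose the affine correction large enough" step precise and uniform: one must quantify, using only the local Lipschitz constants of $g$ (hence of the traces) and a local Lipschitz/growth bound on $F_n$ and $h_{n+1}$ on the relevant compact cubes, how large $\|b\|$ and the constants must be so that the max locks in the old function on the inner cube while the new function controls the outer region, all without ever changing the diagonal values. This is a finite, compactness-based estimate at each stage, so no uniformity across all $n$ is needed — each $F_{n+1}$ is constructed from $F_n$ with constants allowed to depend on $n$ — which is why the argument goes through. Once the sequence $(F_n)$ is built with $F_{n+1} \equiv F_n$ on $I_{n-1}^d$, the limit $f$ exists pointwise, agrees locally with some $F_n$ near every point, is therefore separately convex, and satisfies $f(u,\dots,u) = g(u)$ for all $u \in \mathbb{R}$, completing the proof.
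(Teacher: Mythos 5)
Your plan has a genuine gap, and in fact two distinct ideas in it fail.

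The central issue is the value of $F_{n}$ on the diagonal \emph{outside} $I_{n}$. You set $F_{n+1} = \max\{F_{n}, h_{n+1} + \ell_{n}\}$ with $\ell_{n}$ affine and vanishing on the diagonal. On the diagonal $\ell_{n} \equiv 0$, so for $u \in I_{n+1}$ you get $F_{n+1}(u,\dots,u) = \max\{F_{n}(u,\dots,u),\, g(u)\}$. But you only know $F_{n}(u,\dots,u) = g(u)$ for $u \in I_{n}$; for $u \in I_{n+1}\setminus I_{n}$ the trace of $F_{n}$ is uncontrolled and may well exceed $g(u)$, in which case the max overshoots and $F_{n+1}$ has the wrong trace. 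Your construction never arranges $F_{n}(u,\dots,u) \leq g(u)$ off $I_{n}$, and this is precisely the point the paper's preliminary Claim is designed to handle: it replaces $f_{[a,b]}$ by a separately convex $f^{*}_{[a,b]}$ whose trace equals $g$ on $[a,b]$ and is $\leq g$ on all of $\mathbb{R}$, by \emph{subtracting} the separately convex extension (via Lemma~\ref{smoothcase}) of a nonpositive $\C^{2}$ bump that is zero on $[a,b]$ and very negative far away. Without such a step, any supremum or maximum of local extensions is liable to overshoot $g$ on the diagonal.

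Two auxiliary ideas are also not workable as stated. First, a single affine function $\ell(x)=\langle b,x\rangle$ with $\langle b,(1,\dots,1)\rangle=0$ does not become ``large away from the diagonal''; it is linear on the $(d-1)$-dimensional complement of the diagonal line and takes both signs there, so it cannot uniformly dominate anything off the diagonal. Second, you cannot ``modify $F_{n}$ outside $I_{n}^{d}$ to push it down'' while keeping it unchanged on $I_{n}^{d}$: taking $\max\{F_{n}, A_{n}\}$ only pushes values \emph{up}, and more fundamentally, separate convexity forces $F_{n}$ to lie above its supporting affine functions along every axis-parallel line, so there is no separately convex $\widetilde F_{n}$ that agrees with $F_{n}$ on a cube and is bounded above outside it (unless $F_{n}$ was affine along those lines to begin with). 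The paper avoids both difficulties by not gluing at all: having normalized each local extension so its trace never exceeds $g$, it forms a countable supremum of functions $f^{(k)}$, where each $f^{(k)}$ has an additional rescaled negative bump $\alpha_{k}F(x/2^{k})$ added so that $f^{(k)} \leq 0$ on the cube $[-2^{k}\varepsilon, 2^{k}\varepsilon]^{d}$; for any fixed $x$ only finitely many $f^{(k)}$ are positive, which makes the supremum finite and separately convex with the correct trace.
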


We prove a claim first.

\begin{claim}
For every bounded interval $ [a, b] $, there is a separately convex function $ f^{*}_{[a, b]} : \mathbb{R}^{d} \rightarrow \mathbb{R} $ such that $ f^{*}_{[a, b]}(u, u, \dots , u) = g(u) $ for each $ u \in [a, b] $ and $ f^{*}_{[a, b]}(u, u, \dots , u) \leq g(u) $ for each $ u \in \mathbb{R} \setminus [a, b] $.
\end{claim}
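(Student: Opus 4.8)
The plan is to modify the given extension $f_{[a,b]}$ so that its trace, which already equals $g$ on $[a,b]$, drops below $g$ outside $[a,b]$ while keeping separate convexity. First I would observe that $g$ is locally Lipschitz, hence on $[a,b]$ we have a Lipschitz constant, and by considering the interval $[a-1,b+1]$ with its extension $f_{[a-1,b+1]}$ we secure separate convexity on a neighbourhood. The idea is to cap off: outside $[a,b]$ the trace of $f_{[a-1,b+1]}$ may still agree with $g$, so we need to push it down. The natural device is to take a minimum with an affine (or, in dimension $d$, a separately affine, i.e.\ linear plus constant) function that touches the graph of $g$ at the endpoints $a$ and $b$ from above and falls away outside.

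Concretely, I would first arrange for a \emph{one-sided} version: produce $f^-$ whose trace equals $g$ on $[a,b]$ and is $\le g$ on $[b,\infty)$, then symmetrically handle $(-\infty,a]$, and finally take the minimum of the two (the minimum of two separately convex functions need not be separately convex, so this last combination step must be done with care, presumably by making the two functions agree with a common separately convex function on the overlap region $[a,b]$, or by performing the two truncations successively). For the one-sided construction near $b$: pick a slope $\lambda$ larger than the local Lipschitz constant of $g$ on a neighbourhood of $b$, so that the linear function $\ell(x_1,\dots,x_d) = g(b) + \tfrac{\lambda}{d}\sum_i (x_i - b)$ has trace $u \mapsto g(b) + \lambda(u-b)$, which lies above $g$ on a right neighbourhood of $b$ and tends to $+\infty$; then reflect, i.e.\ use $-\ell$ or rather a downward-opening separately concave barrier — here is where I must be more careful, since I want something \emph{below} $g$ that is still separately convex. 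A cleaner route: let $h$ be a separately convex function on $\er^d$ whose trace is a fixed convex function $\phi$ with $\phi \le g$ everywhere, $\phi = g$ impossible in general, so instead use that by Lemma~\ref{smoothcase} any $\C^2$ trace is achievable, pick a $\C^2$ function $G$ with $G \le g$, $G = g$ at $a$ and $b$, $G < g$ outside $[a,b]$, with extension $F_G$; then set $f^*_{[a,b]} := \min\{f_{[a-1,b+1]}, F_G + C\}$ for a large constant $C$ chosen so the min equals $f_{[a-1,b+1]}$ on the diagonal over $[a,b]$ — but a constant shift pushes $F_G$ up uniformly, which is wrong on the diagonal. The correct mechanism is rather to shift $F_G$ \emph{down}: since outside $[a,b]$ we only need $\le g$, and $g$ is locally Lipschitz so on $[a-1,b+1]$ bounded, one takes $f^*_{[a,b]} = \min\{f_{[a-1,b+1]},\, L_d\}$ where $L_d(x) = g(a) + M|x_1 - \text{(something)}|$ — this is getting into the routine optimization of the barrier and I will not grind it out here.

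The key structural point, and the main obstacle, is that the minimum of two separately convex functions is generally not separately convex, so the truncation must be arranged so that on each coordinate line the active function switches at most finitely often and in a convexity-preserving way. I expect the clean solution is: work on $\er^d$ with the separately convex $f_{[a-1,b+1]}$; choose, via Lemma~\ref{smoothcase}, a separately convex $F$ with diagonal trace a $\C^2$ function $G$ satisfying $G(a)=g(a)$, $G(b)=g(b)$, $G \le g$ on $[a,b]$ with equality only at the endpoints forced to fail — so instead one wants $G$ touching from \emph{above}: $G \ge g$ on $[a,b]$, $G = g$ at the endpoints, $G < g$ outside — no, the inequality directions keep fighting. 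The resolution the authors surely use: take $f^*_{[a,b]}(x) = \min\{ f_{[a-1,b+1]}(x),\ g(a) + N(a - \psi(x)),\ g(b) + N(\psi(x) - b)\}$ where $\psi(x) = \tfrac1d\sum x_i$ and $N$ exceeds the Lipschitz constant of $g$ on $[a-1,b+1]$; each of the three functions is separately convex, on the diagonal the first dominates on $[a,b]$ (because $N$ is large) and is dominated on the complement, and — crucially — for a \emph{fixed} coordinate line the three-fold minimum is still separately convex because the two affine barriers, restricted to that line, are affine, and the minimum of a convex function with two affine functions, taken in the configuration "affine, then convex, then affine", is convex precisely when the two affine pieces have the right monotonicity, which the choice of $N$ and the signs guarantees. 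Verifying that configuration on every coordinate line is the technical heart, and it is what I would devote the bulk of the proof to; once the Claim is established, Proposition~\ref{locglob} follows by a standard exhaustion argument, taking a locally finite-type combination (e.g.\ an infimum, or a max of shifted copies) of the $f^*_{[-n,n]}$.
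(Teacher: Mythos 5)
The key idea you never land on is the one the paper actually uses: \emph{add} a separately convex correction rather than take a \emph{minimum}. Sums of separately convex functions are separately convex; minima are not, and every variant you try founders precisely on this. The paper's proof is short: work with $f_{[a-1,b+1]}$, choose a $\C^2$ function $G$ with $G\le 0$ on $\mathbb{R}$, $G=0$ on $[a,b]$, and $G(u)\le g(u)-f_{[a-1,b+1]}(u,\dots,u)$ for $u\notin[a-1,b+1]$; extend $G$ to a separately convex $F$ on $\mathbb{R}^d$ via Lemma~\ref{smoothcase}; and set $f^*_{[a,b]}=f_{[a-1,b+1]}+F$. The trace is then $f_{[a-1,b+1]}(u,\dots,u)+G(u)$, which equals $g(u)$ on $[a,b]$ (there $G=0$ and the first term is $g$), is $\le g(u)$ on $[a-1,b+1]\setminus[a,b]$ because $G\le 0$ and the first term equals $g$, and is $\le g(u)$ outside $[a-1,b+1]$ by the third condition on $G$. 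You flirt with this when you mention a $\C^2$ minorant $G$ and its extension $F_G$, but then you again reach for a minimum.

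Your final concrete construction,
$$
f^*_{[a,b]}(x) = \min\Big\{ f_{[a-1,b+1]}(x),\ g(a) + N\big(a - \psi(x)\big),\ g(b) + N\big(\psi(x) - b\big)\Big\},
$$
with $\psi(x)=\tfrac1d\sum_i x_i$, is not separately convex and does not even have the correct trace. On a coordinate line $\psi$ is affine in $t$, so one barrier has slope $-N/d$ and the other $+N/d$; their pointwise minimum is increasing then decreasing, i.e.\ a concave tent, and interposing a convex $f_{[a-1,b+1]}$ cannot repair this: for the min to be convex you would need the active slope to go from $+N/d$ up through $f$'s derivative and down to $-N/d$, which contradicts the monotonicity of a convex function's derivative. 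The ``affine--convex--affine is convex'' picture you invoke is the one for a \emph{supremum} of affine minorants, and it pushes values \emph{up}, which is the wrong direction here. Worse, already at the diagonal point $(a,\dots,a)$ the third term equals $g(b)-N(b-a)$, which for large $N$ is far below $g(a)$, so the minimum fails to equal $g(a)$ there — the trace requirement on $[a,b]$ is violated. Replacing ``min with steep barriers'' by ``add a separately convex bump with non-positive $\C^2$ trace'' resolves both problems simultaneously and is the mechanism you were missing.
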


\begin{proof}
Let $ G : \mathbb{R} \rightarrow \mathbb{R} $ be a $ \C^{2} $-function such that
$$ G(u) \leq 0, \; u \in \mathbb{R}, \quad G(u) = 0, \; u \in [a, b], $$
$$ G(u) \leq g(u) - f_{[a-1, b+1]}(u, u, \dots , u), \; u \in \mathbb{R} \setminus [a-1, b+1], $$
and let $ F : \mathbb{R}^{d} \rightarrow \mathbb{R} $ be an extension given by Lemma~\ref{smoothcase}. Then the function
$$ f^{*}_{[a, b]} = f_{[a-1, b+1]} + F $$
works.
\end{proof}

\begin{proof}[Proof of Proposition \ref{locglob}]
Let $ G : \mathbb{R} \rightarrow \mathbb{R} $ be a $ \C^{2} $-function such that
$$ G(u) \leq 0, \; u \in \mathbb{R}, \quad G(u) = 0, \; |u| \geq 1, \quad G(0) < - 1, $$
and let $ F : \mathbb{R}^{d} \rightarrow \mathbb{R} $ be an extension given by Lemma~\ref{smoothcase}. We choose an $ \varepsilon \in (0, 1) $ so that
$$ F(x) \leq -1, \quad x \in [-\varepsilon , \varepsilon ]^{d}. $$
For $ k = 0, 1, 2, \dots $, there is a sufficiently large $ \alpha _{k} \geq 0 $ such that the function
$$ f^{(k)}(x) = \max \big\{ f^{*}_{[2^{k}, 2^{k+1}]}(x), f^{*}_{[-2^{k+1}, -2^{k}]}(x) \big\} + \alpha _{k} F \big( \frac{1}{2^{k}} x \big) $$
fulfils
$$ f^{(k)}(x) \leq 0, \quad x \in [-2^{k}\varepsilon , 2^{k}\varepsilon ]^{d}. $$
It follows that every $ x \in \mathbb{R}^{d} $ satisfies $ f^{(k)}(x) \leq 0 $ for all but finitely many $ k $'s. Thus,
$$ f = \sup \Big( \{ f^{*}_{[-1, 1]} \} \cup \big\{ f^{(k)} : k = 0, 1, 2, \dots \big\} \Big) $$
is a well-defined separately convex function. At the same time,
$$ f^{*}_{[-1, 1]}(u, \dots , u) \leq g(u), \; u \in \mathbb{R}, \quad f^{*}_{[-1, 1]}(u, \dots , u) = g(u), \; u \in [-1, 1], $$
$$ f^{(k)}(u, \dots , u) \leq g(u), \; u \in \mathbb{R}, \quad f^{(k)}(u, \dots , u) = g(u), \; u \in [2^{k}, 2^{k+1}] \cup [-2^{k+1}, -2^{k}]. $$
Consequently, we have $ f(u, \dots , u) = g(u) $ for each $ u \in \mathbb{R} $.
\end{proof}

\section{Necessary condition in general dimension}\label{sec3}
Unlike to the two-dimensional case, we were able to obtain only a very weak necessary condition for a function to be the trace of a separately convex function of three or more variables. The procedure we will use in Section~\ref{sec4} can be applied also in dimension three, at least for a concave $g$, but the resulting condition is not a very interesting one, because it is satisfied by every concave function.

\begin{proposition}\label{cord}
Let $d\geq 1$.
Suppose that $f:\er^d\to\er$ is a separately convex function.
Then
$$
D^+_{v}f(x)\geq -D^+_{-v}f(x)
$$ 
for every direction $v\in S^{d-1}$ and $x\in\er^d$.
\end{proposition}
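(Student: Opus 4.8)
The plan is to reduce the claim to a one-dimensional convexity statement by restricting $f$ to the line through $x$ in the direction $v$. Fix $x \in \er^d$ and $v \in S^{d-1}$, and consider the function $h(s) = f(x + sv)$ for $s \in \er$. The inequality to be proven, $D^+_v f(x) \geq -D^+_{-v} f(x)$, translates into $h'_+(0) \geq h'_-(0)$, where $h'_+$ and $h'_-$ denote the one-sided derivatives of $h$ at $0$ (note that $D^+_v f(x)$ is the right derivative $h'_+(0)$ while $-D^+_{-v} f(x) = -(-h'_-(0)) = h'_-(0)$, using that the upper one-sided derivative in direction $-v$ equals minus the left derivative of $h$). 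Since $f$ is separately convex, it is locally Lipschitz (as recorded in the Notation section), hence $h$ is locally Lipschitz, so the one-sided derivatives exist as genuine limits at least along sequences; the content of the proposition is the inequality $h'_+(0) \geq h'_-(0)$, which for a \emph{convex} $h$ would be immediate but which we must extract from mere separate convexity.

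The key step is a convexity-in-the-mean argument. First I would observe that it suffices to handle a direction $v$ with all coordinates nonzero after a permutation; directions with some zero coordinates can be treated by passing to the relevant coordinate subspace, where $f$ restricted to an affine subspace spanned by a subset of the axes is again separately convex, and in the extreme case $v$ is itself a coordinate direction and the statement is just one-sided convexity of a convex function of one variable. For a generic $v$, the main idea is to write the increment $h(s) + h(-s) - 2h(0) = f(x+sv) + f(x-sv) - 2f(x)$ as a telescoping sum over the coordinate directions: moving from $x - sv$ to $x + sv$ one coordinate at a time, and using separate convexity along each such coordinate segment together with a symmetric pairing of the forward and backward moves, one bounds $f(x+sv) + f(x-sv) - 2f(x)$ from below by a quantity that is $o(s)$, or more precisely one shows $\liminf_{s \to 0+} \frac{h(s) + h(-s) - 2h(0)}{s} \geq 0$. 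This is precisely the higher-dimensional analogue of Lemma~\ref{sverak}, and in fact the argument is modeled on that lemma. Once $\liminf_{s \to 0+} \frac{h(s)+h(-s)-2h(0)}{s} \geq 0$ is established, the conclusion $h'_+(0) \geq h'_-(0)$ follows because $\frac{h(s)+h(-s)-2h(0)}{s} = \frac{h(s)-h(0)}{s} - \frac{h(0)-h(-s)}{s} \to h'_+(0) - h'_-(0)$ as $s \to 0+$, using that $h$ is Lipschitz so the one-sided derivatives exist (here one may first have to pass to a subsequence along which both difference quotients converge, then use the $\liminf$ bound on the combination).

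The step I expect to be the main obstacle is making the telescoping/pairing argument along coordinate directions precise when the coordinates of $v$ have different signs, since then "moving one coordinate at a time" from $x-sv$ toward $x+sv$ does not stay on a nice monotone path, and the convexity estimates along the intermediate segments must be combined carefully so that the error terms genuinely cancel to first order. One clean way to organize this: group the coordinates into those where $v_i > 0$ and those where $v_i < 0$, apply separate convexity along each axis to compare the value at the midpoint-type configuration with the average of two endpoint-type configurations, and iterate; each application of the defining inequality of convexity along a segment of length $O(s)$ contributes a midpoint-convexity defect that is nonnegative, and summing these over the finitely many coordinates yields the desired lower bound with the leading-order terms arranged to vanish. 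Alternatively, if a direct path argument proves awkward, one can invoke Lemma~\ref{sverak} after an affine change of variables in two auxiliary coordinates and then reduce the general $d$ to $d=2$, mirroring the reduction used in Lemma~\ref{smoothcase}.
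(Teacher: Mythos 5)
Your overall strategy is genuinely different from the paper's, and it has a gap that you yourself flag but do not close. You propose to reduce to the one-dimensional function $h(s)=f(x+sv)$ and establish the strong \v{S}ver\'ak-type bound $\liminf_{s\to 0+}\frac{h(s)+h(-s)-2h(0)}{s}\ge 0$ for an \emph{arbitrary} direction $v$ in any dimension $d$. This would indeed imply the proposition (since $D_v^+f(x)+D_{-v}^+f(x)\ge\limsup_{s\to 0+}\frac{h(s)+h(-s)-2h(0)}{s}$), but it is strictly stronger than what is asserted, and neither of your two routes to it holds up. The telescoping idea you sketch, pairing one-coordinate moves from $x-sv$ to $x+sv$, naturally yields the $2^d$-corner inequality $\sum_{\epsilon\in\{\pm1\}^d} f(x+s\epsilon)\ge 2^d f(x)$ (for $v$ having $\pm 1$ coordinates after rescaling), i.e. it controls the \emph{sum} of the $2^{d-1}$ diagonal second differences, not the single one you need. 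Isolating the direction $v$ from that aggregate is exactly the non-trivial content, and you do not say how to do it; the cross terms are generically $O(s)$ for Lipschitz $f$, so they do not vanish to first order. Your fallback, reducing to $d=2$ by an affine change of variables and invoking Lemma~\ref{sverak}, does not work either: a direction with three or more nonzero coordinates does not lie in any $2$-dimensional coordinate subspace, and restricting a separately convex $f:\er^d\to\er$ to a $2$-dimensional affine subspace containing $v$ destroys separate convexity (the reduction in Lemma~\ref{smoothcase} goes the other way, from $\er^2$ up to $\er^d$ by adding dummy variables, which is fine for \emph{constructing} extensions but useless for a \emph{necessary} condition).

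The paper's proof is quite different and avoids the \v{S}ver\'ak inequality entirely except in a parenthetical remark for $d=2$. It proceeds by induction on $d$: assuming a strict failure $D_v^+f(x)<-D_{-v}^+f(x)$, one may take $v$ to have all nonzero coordinates (else restrict to a coordinate subspace and use the inductive hypothesis), normalize so that $x=0$, $f(0)=0$, $v=\frac{1}{\sqrt d}(1,\dots,1)$, and replace $f$ by its blow-up $\limsup_{r\to 0+}\frac1r f(rx)$, which is positively $1$-homogeneous and still separately convex with the same strict gap. One then looks at the points $u_t=(-t,\dots,-t,1)$: separate convexity in the last coordinate plus homogeneity show $f(u_0)\ge\frac12(f(y_1)+1)$, while homogeneity plus separate convexity show $f$ is convex on the segments $[u_0,u_{\pm1}]$, and a short computation shows the midpoint value $f(u_0)$ strictly exceeds the average $\frac12(f(u_1)+f(u_{-1}))$. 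That exhibits the restriction of $f$ to the hyperplane $x_d=1$ as a separately convex function on $\er^{d-1}$ violating the $(d-1)$-dimensional statement, contradicting the inductive hypothesis. You should look at this argument: the homogenization step is what lets one descend in dimension, and it is precisely what is missing from a direct estimate of the type you attempted.
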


\begin{proof}
The proposition will be proven by induction on $ d $.
For $d=1$ it is sufficient to use the fact that the corresponding inequality holds for every convex function.
(Although it is not needed here, we also note that the validity for $ d = 2 $ follows from Lemma~\ref{sverak}.)
Now, assume that the proposition is valid up to $d-1$ for some $d\geq 2$.
We will prove the validity for $d$.

Aiming for a contradiction suppose that
\begin{equation*}
D^+_{v}f(x)< -D^+_{-v}f(x)
\end{equation*}
for a separately convex function $f:\er^d\to\er$, $x\in\er^d$ and a direction $v\in S^{d-1}$.
Since $f$ is separately convex and by the induction procedure, we can suppose that $v$ is not in the linear hull of any $d-1$ coordinate directions.
This means that we can assume $v=\frac{1}{\sqrt{d}}(1,1,\dots,1)$.
We can also suppose that $x=(0,0,\dots,0)$, $f(x)=0$ and
\begin{equation}\label{derivatives}
D^+_{v}f(x)=-\frac{1}{\sqrt{d}}<\frac{1}{\sqrt{d}}=-D^+_{-v}f(x).
\end{equation}
Moreover, there is no loss in generality in assuming that
\begin{equation}\label{linear}
f(ta)=tf(a)
\end{equation}
for every $a\in\er^d$ and $t\geq 0$.
Indeed, if $f$ satisfies \eqref{derivatives}, so does the function
$$
x\mapsto\limsup_{r\to0+}\frac{1}{r}f(rx),
$$
which additionally satisfies \eqref{linear}.

For $t\in[-1,1]$ put
$$
u_t=(-t,-t,\dots,1),\quad x_t=(-t,-t,\dots,-t)\quad\text{and}\quad y_t=(-t,-t,\dots,t).
$$
Due to the separate convexity, \eqref{derivatives} and \eqref{linear}, we have
$$
\begin{aligned}
f(u_t)\geq f(y_t)+\frac{|u_t-y_t|}{|x_t-y_t|}\cdot (f(y_t)-f(x_t))=\frac{1+t}{2t}f(y_t)-\frac{1-t}{2t}f(x_t)\\
=\frac{1+t}{2}f(y_1)-\frac{1-t}{2}f(x_1)=\frac{1+t}{2}f(y_1)+\frac{1-t}{2}
\end{aligned}
$$
for $ t > 0 $, which gives us (letting $t\to 0+$)
\begin{equation}\label{middlepoint}
f(u_0)\geq \frac{f(y_1)+1}{2}.
\end{equation}

By separate convexity we have for $0<r<s<t<1$
$$
\left(1-\frac{r}{t}\right)f\left(\frac{r}{s}u_s\right)\leq \left(1-\frac{r}{s}\right)f\left(\frac{r}{t}u_t\right)
+\left(\frac{r}{s}-\frac{r}{t}\right)f\left(u_r\right).
$$
Using \eqref{linear} we obtain
$$
\begin{aligned}
\left(1-\frac{r}{t}\right)\frac{r}{s}f\left(u_s\right)\leq& \left(1-\frac{r}{s}\right)\frac{r}{t}f\left(u_t\right)
+\left(\frac{r}{s}-\frac{r}{t}\right)f\left(u_r\right),\\
\frac{r(t-r)}{st}f\left(u_s\right)\leq& \frac{r(s-r)}{st}f\left(u_t\right)+\frac{r(t-s)}{st}f\left(u_r\right),\\
(t-r)f\left(u_s\right)\leq& (s-r)f\left(u_t\right)+(t-s)f\left(u_r\right).
\end{aligned}
$$
This implies that $f$ is convex on the line connecting $u_1$ and $u_0$.
Similar way we obtain that $f$ is convex on the line connecting $u_{-1}$ and $u_0$.
On the other hand
$$
\frac{f(u_1)+f(u_{-1})}{2}=\frac{f(u_1)-1}{2}<\frac{f(y_1)+1}{2}\leq f(u_0)
$$
which tells us that $f$ restricted to $\er^{d-1}\times\{1\}$ is a separately convex function on (a copy of) $\er^{d-1}$ 
such that $D^+_w f(0)< -D^+_{-w}f(0)$ with $w=\frac{1}{\sqrt{d-1}}(1,1,\dots,1)\in S^{d-2}$.
But this is not possible due to the induction procedure, a contradiction.
\end{proof}

\section{Necessary conditions in two dimensions}\label{sec4}
The purpose of this section is to find criteria on a function $ g $ to be the trace on the diagonal of a separately convex function $ f $ of two variables.
We start with some investigation of the behaviour of $ f $ on the diagonals $ x = y $ and $ x = -y $.

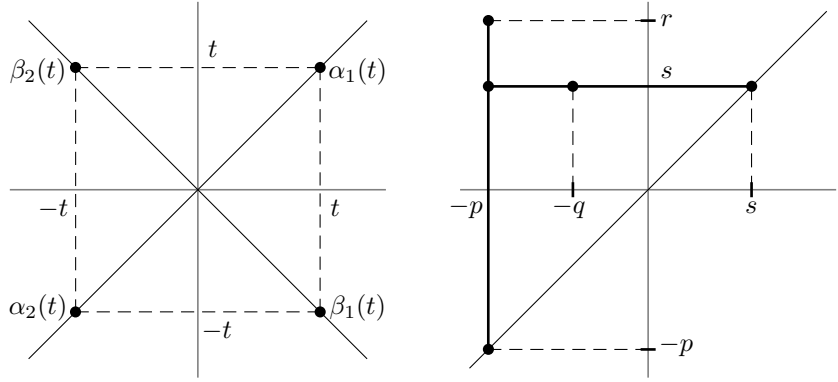
\begin{figure}[t]
\psset{unit=0.25mm}
\begin{pspicture}(-100,-100)(100,100)
\psset{linecolor=gray}
\psset{linewidth=0.6pt}
\psline(-100,0)(100,0)
\psline(0,-100)(0,100)
\psset{linecolor=black}
\psset{linewidth=0.4pt}
\psline(-90,-90)(90,90)
\psline(-90,90)(90,-90)
\psline[linestyle=dashed](65,-65)(65,65)
\psline[linestyle=dashed](65,65)(-65,65)
\psline[linestyle=dashed](-65,65)(-65,-65)
\psline[linestyle=dashed](-65,-65)(65,-65)
\psset{dotscale=1.5}
\psdots(65,65)(65,-65)(-65,-65)(-65,65)
\put(73,-10){\makebox(0,0){$ t $}}
\put(8,75){\makebox(0,0){$ t $}}
\put(-77,-10){\makebox(0,0){$ -t $}}
\put(10,-75){\makebox(0,0){$ -t $}}
\put(85,63){\makebox(0,0){$ \alpha_{1}(t) $}}
\put(-85,-63){\makebox(0,0){$ \alpha_{2}(t) $}}
\put(85,-63){\makebox(0,0){$ \beta_{1}(t) $}}
\put(-85,63){\makebox(0,0){$ \beta_{2}(t) $}}
\end{pspicture}
\hspace{7.5mm}
\psset{unit=0.125mm}
\begin{pspicture}(-200,-200)(200,200)
\psset{linecolor=gray}
\psset{linewidth=0.6pt}
\psline(-200,0)(200,0)
\psline(0,-200)(0,200)
\psset{linecolor=black}
\psset{linewidth=0.4pt}
\psline(-190,-190)(190,190)
\psset{linewidth=1pt}
\psline(-170,-170)(-170,180)
\psline(110,110)(-170,110)
\psline(110,-8)(110,8)
\psline(-80,-8)(-80,8)
\psline(-8,180)(8,180)
\psline(-8,-170)(8,-170)
\psset{linewidth=0.4pt}
\psline[linestyle=dashed](-170,-170)(0,-170)
\psline[linestyle=dashed](110,0)(110,110)
\psline[linestyle=dashed](0,180)(-170,180)
\psline[linestyle=dashed](-80,0)(-80,110)
\psset{dotscale=1.5}
\psdots(110,110)(-80,110)(-170,110)(-170,-170)(-170,180)
\put(-84,-19){\makebox(0,0){$ -q $}}
\put(-194,-19){\makebox(0,0){$ -p $}}
\put(110,-19){\makebox(0,0){$ s $}}
\put(30,-166){\makebox(0,0){$ -p $}}
\put(20,125){\makebox(0,0){$ s $}}
\put(20,180){\makebox(0,0){$ r $}}
\end{pspicture}
\caption{(a) Values of the functions $\alpha_i$, $\beta_i$, $i=1,2$. (b) An illustration concerning of inequalities in \eqref{vypocet1} and \eqref{vypocet2}.}
 \label{fig:alphabeta}
\end{figure}

Consider $f$ a separately convex function on $\er^2$. 
Define functions $\alpha_i$, $\beta_i$, $i=1,2$ on $[0,\infty)$ by (cf. Figure~\ref{fig:alphabeta}(a))
$$
\alpha_1(t):=f(t,t),\;\alpha_2(t):=f(-t,-t),\;\beta_1(t):=f(t,-t)\;\text{and}\;\beta_2(t):=f(-t,t).
$$
Pick $p,q,r,s>0$, $p>q$ and $r>s$. 
Using the separate convexity of $f$ we can obtain (see Figure~\ref{fig:alphabeta}(b))
\begin{equation} \label{vypocet1}
\begin{aligned}
f(-p,s)\geq& f(-q,s)+\frac{p-q}{q+s}\cdot\left(f(-q,s)-f(s,s)\right)\\
=& \frac{q+s+p-q}{q+s}f(-q,s)-\frac{p-q}{q+s}f(s,s)\\
=&\frac{s+p}{q+s}f(-q,s)-\frac{p-q}{q+s}f(s,s)
\end{aligned}
\end{equation}
and similarly
\begin{equation} \label{vypocet2}
\begin{aligned}
f(-p,r)\geq \frac{p+r}{p+s}f(-p,s)-\frac{r-s}{p+s}f(-p,-p).
\end{aligned}
\end{equation}
Putting these two together we then obtain
\begin{equation}
\begin{aligned}
f(-p,r)\geq& \frac{p+r}{p+s}\left(\frac{s+p}{q+s}f(-q,s)-\frac{p-q}{q+s}f(s,s)\right)-\frac{r-s}{p+s}f(-p,-p)\\
=& \frac{p+r}{q+s}\cdot f(-q,s)- \frac{p+r}{p+s}\cdot\frac{p-q}{q+s}\cdot f(s,s)-\frac{r-s}{p+s}f(-p,-p).
\end{aligned}
\end{equation}
This can be rewritten to the more symmetric form
\begin{equation}\label{mostgeneral}
\frac{f(-p,r)}{p+r}\geq \frac{f(-q,s)}{q+s} -\frac{p-q}{(p+s)(q+s)}\cdot f(s,s)-\frac{r-s}{(p+s)(p+r)}\cdot f(-p,-p).
\end{equation}
In the special case of $p=r$ and $q=s$ we then obtain
\begin{equation}\label{lessgeneral}
\frac{f(-r,r)}{r}\geq \frac{f(-s,s)}{s} -\frac{r-s}{s(r+s)}\cdot f(s,s)-\frac{r-s}{r(r+s)}\cdot f(-r,-r).
\end{equation}
Due to the symmetry this implies also
\begin{equation}\label{lessgeneralsym}
\frac{f(r,-r)}{r}\geq \frac{f(s,-s)}{s} -\frac{r-s}{s(r+s)}\cdot f(-s,-s)-\frac{r-s}{r(r+s)}\cdot f(r,r).
\end{equation}
The last two formulas can be rewritten as
\begin{equation}\label{lessgeneral2}
\frac{\beta_2(r)}{r}-\frac{\beta_2(s)}{s}\geq  -\frac{r-s}{s(r+s)}\cdot \alpha_1(s)-\frac{r-s}{r(r+s)}\cdot\alpha_2(r)
\end{equation}
and
\begin{equation}\label{lessgeneralsym2}
\frac{\beta_1(r)}{r}-\frac{\beta_1(s)}{s}\geq  -\frac{r-s}{s(r+s)}\cdot \alpha_2(s)-\frac{r-s}{r(r+s)}\cdot\alpha_1(r).
\end{equation}
Moreover, formulas \eqref{lessgeneral2} and \eqref{lessgeneralsym2} imply
\begin{equation}\label{bezicek}
\frac{\beta(r)}{r}-\frac{\beta(s)}{s}\geq  -\frac{r-s}{r+s}\cdot \left(\frac{\alpha(s)}{s}+\frac{\alpha(r)}{r}\right),
\end{equation}
where we denoted $\alpha=\alpha_1+\alpha_2$ and $\beta=\beta_1+\beta_2$.
It turns out that it is sometimes useful to work with functions $\gamma(t):=\frac{\alpha(t)}{t}$ and $\delta(t):=\frac{\beta(t)}{t}$.
If we rewrite \eqref{bezicek} using $\gamma$ and $\delta$ we obtain
\begin{equation}\label{gammadelta}
\frac{\delta(r)-\delta(s)}{r-s}\geq  -\frac{\gamma(r)+\gamma(s)}{r+s}.
\end{equation}
Suppose that $\delta'(r)$ exists (which is the case for almost every $r$, since $\delta$ is locally Lipschitz on $(0,\infty)$), then
\begin{equation}\label{derivative}
\delta'(r)=\lim_{s\to r-}\frac{\delta(r)-\delta(s)}{r-s}\geq \lim_{s\to r-} -\frac{\gamma(r)+\gamma(s)}{r+s}=-\frac{\gamma(r)}{r}.
\end{equation}
Using the local Lipschitzness of $\delta$ we thus obtain
\begin{equation}\label{interval}
\delta(a)-\delta(b)=\int\limits_{b}^{a}\delta'(t)\;dt\geq -\int\limits_{b}^{a}\frac{\gamma(t)}{t}\;dt
\end{equation}
for $a>b>0$.
This in particular means that
\begin{equation}
\delta(a)-\liminf_{b\to 0+}\delta(b)\geq-\liminf_{b\to 0+}\int\limits_{b}^{a}\frac{\gamma(t)}{t}\;dt.
\end{equation}
Assume from now on that $f(0,0)=0$, then, using Lemma~\ref{sverak}, we obtain that $\liminf_{b\to 0+}\delta(b)\geq 0$.
This gives us
\begin{equation}
\frac{\beta(a)}{a}=\delta(a)\geq-\liminf_{b\to 0+}\int\limits_{b}^{a}\frac{\gamma(t)}{t}\;dt=-\liminf_{b\to 0+}\int\limits_{b}^{a}\frac{\alpha(t)}{t^2}\;dt,
\end{equation}
which can be rewritten to the form
\begin{equation}\label{necess1}
\beta(a)\geq-a\liminf_{b\to 0+}\int\limits_{b}^{a}\frac{\alpha(t)}{t^2}\;dt.
\end{equation}
This gives us
\begin{proposition}\label{propdiff}
Let $g$ be a locally Lipschitz function on $\er$. Suppose that there is a separately convex function $h$ on $\er^2$ such that $g(t)=h(t,t)$ for every $t\in\er$. 
Then 
\begin{equation}\label{odhad}
h(x+u,x-u)+h(x-u,x+u)-2h(x,x)\geq -u\cdot\liminf_{v\to 0+}\int\limits_{v}^{u}\frac{\omega_g(x,t)}{t^2}\;dt,
\end{equation}
for every $x,u\in\er,u>0$.
In particular, for every bounded interval $I\subset\er$, there is a constant $C(g,I)$ such that
$$
-\liminf_{b\to 0+}\int_{b}^{1}\frac{\omega_g(x,t)}{t^2}\;dt<C(g,I)
$$
for every $x\in I$.
\end{proposition}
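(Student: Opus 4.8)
The plan is to obtain the proposition as an immediate consequence of the computation already carried out in this section, which established inequality \eqref{necess1} for an arbitrary separately convex function normalised to vanish at the origin. All that is needed is to run that argument at the translate of $h$ based at $x$ and to add one elementary boundedness remark at the end.

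First I would fix $x\in\er$ and set $\tilde h(a,b):=h(x+a,x+b)-h(x,x)$ for $(a,b)\in\er^2$. Translating the variables and subtracting a constant preserve separate convexity, so $\tilde h$ is separately convex, and by construction $\tilde h(0,0)=0$; its trace $\tilde g(t):=\tilde h(t,t)=g(x+t)-g(x)$ is locally Lipschitz because $g$ is. Thus $\tilde h$ satisfies exactly the hypotheses under which the chain \eqref{vypocet1}--\eqref{necess1} was derived, and that chain applies to $\tilde h$ verbatim; in particular the step $\liminf_{b\to 0+}\delta(b)\ge 0$ follows from Lemma~\ref{sverak} applied to $\tilde h$, or rather to its reflection $(a,b)\mapsto\tilde h(a,-b)$, which is again separately convex. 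It then remains to read off what the functions $\alpha=\alpha_1+\alpha_2$ and $\beta=\beta_1+\beta_2$ attached to $\tilde h$ are:
$$ \alpha(t)=\tilde h(t,t)+\tilde h(-t,-t)=g(x+t)+g(x-t)-2g(x)=\omega_g(x,t), $$
$$ \beta(t)=\tilde h(t,-t)+\tilde h(-t,t)=h(x+t,x-t)+h(x-t,x+t)-2h(x,x). $$
Substituting these into \eqref{necess1} for $\tilde h$, with the free parameter there taken equal to $u$, is precisely \eqref{odhad}.

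For the last assertion of the proposition I would specialise \eqref{odhad} to $u=1$, which gives
$$ -\liminf_{v\to 0+}\int_{v}^{1}\frac{\omega_g(x,t)}{t^{2}}\,dt \;\le\; h(x+1,x-1)+h(x-1,x+1)-2h(x,x). $$
Since $h$ is separately convex it is locally Lipschitz, hence bounded on bounded subsets of $\er^2$; as $I$ is bounded, the right-hand side is bounded above on $I$ by some finite $M$, and $C(g,I):=M+1$ works. I do not expect a genuine obstacle: the mathematical substance is entirely contained in the already-completed derivation of \eqref{necess1}, so the only things to be careful about are the bookkeeping of the translation (checking that each hypothesis used above transfers to $\tilde h$) and the elementary observation that a locally Lipschitz $h$ is bounded on the bounded set $\{(x\pm1,x\mp1):x\in I\}\cup\{(x,x):x\in I\}$.
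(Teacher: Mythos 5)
Your argument is exactly the paper's: the paper also derives (\ref{odhad}) by applying (\ref{necess1}) to the translated function $f(y,z)=h(x+y,x+z)-h(x,x)$, identifies $\alpha(t)=\omega_g(x,t)$ and $\beta(t)$ with the left-hand side of (\ref{odhad}), and deduces the final uniform bound from the local Lipschitzness of $h$. Your additional remark that the $\liminf\delta\ge 0$ step comes from Lemma~\ref{sverak} applied to the reflection $(a,b)\mapsto\tilde h(a,-b)$ is a correct and slightly more explicit version of what the paper leaves implicit.
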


\begin{proof}
The first part of the proposition follows directly from \eqref{necess1} applied on the separately convex function $f$ defined as $f(y,z)=h(x+y,x+z)-h(x,x)$.
Indeed, in this case the left hand side in \eqref{odhad} is equal to
$$
h(x+u,x-u)+h(x-u,x+u)-2h(x,x)=f(u,-u)+f(-u,u)=\beta(u),
$$
whereas the argument of the integral inside the \uv{$\liminf$} on the right hand side is equal to
$$
\begin{aligned}
\frac{\omega_g(x,t)}{t^2}=&\frac{g(x+t)+g(x-t)-2g(x)}{t^2}\\
=& \frac{h(x+t,x+t)+h(x-t,x-t)-2h(x,x)}{t^2}\\
=& \frac{f(t,t)+f(-t,-t)}{t^2}
=\frac{\alpha(t)}{t^2}.
\end{aligned}
$$

The second part of the proposition follows immediately from \eqref{odhad} and from the local Lipschitzness of $h$.
\end{proof}

Now, return to formula \eqref{bezicek} and consider the special case of $\alpha(s)=\alpha(r)$ which gives us
\begin{equation}\label{constant}
\begin{aligned}
\frac{\beta(r)}{r}-\frac{\beta(s)}{s}\geq&  -\frac{r-s}{r+s}\cdot \left(\frac{\alpha(s)}{s}+\frac{\alpha(s)}{r}\right)
= -\frac{r-s}{r+s}\cdot \frac{r+s}{rs}\cdot\alpha(s)\\
=& -\frac{r-s}{rs}\cdot\alpha(s)
= -\left(\frac{\alpha(s)}{s}-\frac{\alpha(s)}{r}\right)
= -\int\limits_{s}^{r}\frac{\alpha(s)}{t^2}\;dt.
\end{aligned}
\end{equation} 

Pick some $0<s_1<r_1<s_2<r_2<\cdots<s_k<r_k<w$ with $\alpha(s_i)=\alpha(r_i)$, $i=1,\dots,k$ and consider the function $\tilde{\alpha}$ on $[0,\infty)$ defined as
$$
\tilde{\alpha}(t):=
\begin{cases}
\alpha(s_i) & \text{if}\quad t\in[s_i,r_i],\\
\alpha(t)  & \text{otherwise}.
\end{cases}
$$
By \eqref{constant} and \eqref{interval} we obtain that
$$
\frac{\beta(r_i)}{r_i}-\frac{\beta(s_i)}{s_i}\geq -\int\limits_{s_i}^{r_i}\frac{\alpha(s_i)}{t^2}\;dt=-\int\limits_{s_i}^{r_i}\frac{\tilde\alpha(t)}{t^2}\;dt,
$$
$$
\frac{\beta(s_{i+1})}{s_{i+1}}-\frac{\beta(r_i)}{r_i}\geq -\int\limits_{r_i}^{s_{i+1}}\frac{\alpha(t)}{t^2}\;dt=-\int\limits_{r_i}^{s_{i+1}}\frac{\tilde\alpha(t)}{t^2}\;dt
$$
and
$$
\frac{\beta(w)}{w}-\frac{\beta(r_k)}{r_k}\geq -\int\limits_{r_k}^{w}\frac{\alpha(t)}{t^2}\;dt=-\int\limits_{r_k}^{w}\frac{\tilde\alpha(t)}{t^2}\;dt.
$$
Putting these inequalities together we then obtain
\begin{equation}\label{skonstantou}
\frac{\beta(w)}{w}-\frac{\beta(s_1)}{s_1}\geq -\int\limits_{s_1}^{w}\frac{\tilde\alpha(t)}{t^2}\;dt.
\end{equation}

The formula \eqref{skonstantou} leads to the following
\begin{theorem}\label{necess2}
Let $g$ be a locally Lipschitz function on $\er$. Suppose that there is a separately convex function $h$ on $\er^2$ such that $g(t)=h(t,t)$ for every $t\in\er$. 
Then for every bounded interval $I\subset\er$, there is a constant $C^{*}(g,I)$ such that
$$
-\int\limits_{0}^{1}\frac{\omega^{*}_g(x,t)}{t^2}\;dt<C^{*}(g,I)
$$
for every $x\in I$.
\end{theorem}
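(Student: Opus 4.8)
The plan is to fix $x\in I$ and argue, exactly as in the proof of Proposition~\ref{propdiff}, with the separately convex function $f(y,z)=h(x+y,x+z)-h(x,x)$: then $f(0,0)=0$, the sum $\alpha(t):=\alpha_1(t)+\alpha_2(t)$ equals $\omega_g(x,t)$, and $\beta(t):=\beta_1(t)+\beta_2(t)$ equals $h(x+t,x-t)+h(x-t,x+t)-2h(x,x)$; set $\delta(t)=\beta(t)/t$, so that $\omega^*_g(x,t)=\alpha^*(t):=\inf_{0\le s\le t}\alpha(s)$. Fixing a Lipschitz constant $L$ of $h$ on a compact square containing all the points $(x\pm t,x\pm t)$ and $(x\pm t,x\mp t)$ with $x\in I$, $t\in[0,1]$, we get that $\alpha$ is $2\sqrt 2\,L$-Lipschitz with $\alpha(0)=0$ and that $|\delta(t)|\le 2\sqrt 2\,L$ for $t\in(0,1]$, all uniformly in $x$. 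It suffices to produce a bound $-\int_0^1\omega^*_g(x,t)\,t^{-2}\,dt\le C^*$ with $C^*$ depending only on $L$ (hence only on $h$ and $I$). Throughout we use the elementary structure of $\alpha^*$: it is continuous, non-increasing, and $\le 0$; the set $U:=\{t>0:\alpha^*(t)<\alpha(t)\}$ is open; $\alpha^*=\alpha$ off $U$; and on each component $(a_j,b_j)$ of $U$ one has $\alpha^*\equiv\alpha(a_j)=\alpha(b_j)$.

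The core step is the estimate
\[
\delta(w)-\delta(b)\ \ge\ -\int_b^w\frac{\alpha^*(t)}{t^2}\,dt
\]
whenever $0<b<w$, $b\notin U$, and $w$ is neither in $U$ nor a right endpoint of a component of $U$. (This is strictly stronger than \eqref{interval}, since $\alpha^*\le\alpha$; Proposition~\ref{propdiff}, which only controls $\omega_g$, is of no help, because passing to the smaller $\omega^*_g$ reverses the relevant inequality.) To prove it, fix $\rho>0$ and let $(s_1,r_1),\dots,(s_k,r_k)$ be the finitely many components of $U$ lying in $(b,w)$ of length $>\rho$ (all satisfy $r_i<w$ by the hypothesis on $w$); form $\tilde\alpha_\rho$ from $\alpha$ by replacing it with the constant $\alpha(s_i)$ on each $[s_i,r_i]$ — in particular $\tilde\alpha_\rho=\alpha$ on $(b,s_1)$, since no such component is contained there. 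As $\alpha(s_i)=\alpha(r_i)$, formula \eqref{skonstantou} applied to $f$ gives $\delta(w)-\delta(s_1)\ge-\int_{s_1}^w\tilde\alpha_\rho(t)\,t^{-2}\,dt$, while \eqref{interval} on $(b,s_1)$, where $\tilde\alpha_\rho=\alpha$, gives $\delta(s_1)-\delta(b)\ge-\int_b^{s_1}\tilde\alpha_\rho(t)\,t^{-2}\,dt$; adding, $\delta(w)-\delta(b)\ge-\int_b^w\tilde\alpha_\rho(t)\,t^{-2}\,dt$. Finally $0\le\tilde\alpha_\rho-\alpha^*\le 2\sqrt2\,L\rho$ on $[b,w]$, since the two functions coincide except on components of $U$ of length $\le\rho$, on which the difference is at most the oscillation of $\alpha$; hence $\tilde\alpha_\rho\to\alpha^*$ uniformly on $[b,w]$, and as $[b,w]$ is bounded away from $0$ we may let $\rho\to0$ to obtain the estimate.

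I expect this estimate to be the main obstacle, the delicate points being: one genuinely must collapse the \uv{bumps} of $\omega_g(x,\cdot)$ rather than quote Proposition~\ref{propdiff}; on the segment $(b,s_1)$ before the first collapsed bump, $\tilde\alpha_\rho$ still equals $\alpha$, which is exactly what makes \eqref{interval} applicable there; and the limit $\rho\to0$ is legitimate only because $[b,w]$ stays away from $0$, near which $\alpha$ may be positive and $t^{-2}$ is not integrable. The bookkeeping for components of $U$ touching $0$ or $1$, or for two consecutive components sharing an endpoint (in which case \eqref{skonstantou} is applied with those two bumps merged), is routine.

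The theorem then follows by letting the left endpoint tend to $0$, with $w=1$. Put $b^*=\inf\{t>0:t\notin U\}$. If $b^*\ge1$ (in particular if $U=(0,\infty)$), then $\alpha^*\equiv0$ on $(0,1)$ and $-\int_0^1\omega^*_g(x,t)t^{-2}\,dt=0$. If $0<b^*<1$, then $(0,b^*)$ is a component of $U$, $\alpha^*\equiv0$ on it, hence $-\int_0^1\omega^*_g(x,t)t^{-2}\,dt=-\int_{b^*}^1\omega^*_g(x,t)t^{-2}\,dt$, and the estimate with $b=b^*$, $w=1$ gives this $\le\delta(1)-\delta(b^*)\le 4\sqrt2\,L$. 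If $b^*=0$, pick $b_n\notin U$ with $b_n\downarrow0$, apply the estimate on $[b_n,1]$, and let $n\to\infty$: the left-hand side increases to $-\int_0^1\omega^*_g(x,t)t^{-2}\,dt$ (because $\alpha^*\le0$), while $\limsup_n(\delta(1)-\delta(b_n))=\delta(1)-\liminf_n\delta(b_n)\le\delta(1)\le 2\sqrt2\,L$, using $\liminf_{t\to0+}\delta(t)\ge0$. In the last two cases, should there be a component $(a_j,b_j)$ of $U$ with $a_j<1\le b_j$ (so that $w=1$ is inadmissible), one applies the estimate with right endpoint $a_j$ instead and adds $-\int_{a_j}^1\omega^*_g(x,t)t^{-2}\,dt=-\alpha(a_j)(a_j^{-1}-1)$, of modulus at most $2\sqrt2\,L$. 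Finally, $\liminf_{t\to0+}\delta(t)\ge0$ follows from Lemma~\ref{sverak} applied to the separately convex function $(y,z)\mapsto f(y,-z)$, exactly as in the derivation of \eqref{necess1}. This produces the uniform bound $C^*$ and completes the argument.
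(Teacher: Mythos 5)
Your proof is correct and follows essentially the same approach as the paper: both collapse the \uv{bumps} of $\omega_g(x,\cdot)$ --- the maximal intervals where the monotone minorant $\omega_g^*$ separates from $\omega_g$ --- via formula~\eqref{skonstantou}, pass to the limit as all bumps are collapsed, and then send the left endpoint to $0$ using Lemma~\ref{sverak}. The only differences are in bookkeeping: the paper passes to the limit by applying Monotone Convergence to the sequence $\kappa_n$ obtained by collapsing the first $n$ intervals of constancy, and avoids the edge case at $t=1$ by splitting at the last global minimizer $p$ of $\omega_g(x,\cdot)$ on $[0,1]$ (so that $\omega_g^*$ is constant on $[p,1]$); you instead pass to the limit via the uniform convergence of the $\rho$-truncations $\tilde\alpha_\rho$ (exploiting the Lipschitz bound) and handle the case of $w=1$ lying inside a bump by backing off to its left endpoint $a_j$ --- both variants work.
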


\begin{proof}
Fix a bounded interval $I=[a,b]$, pick $x\in I$ and let $p\in[0,1]$ be the last point of a global minimum of $\omega_g(x,\cdot)$ on $[0,1]$.

Let $I_n=[a_n,b_n]$, $n\in A$ be the (at most) countable system of non-degenerate maximal intervals in $[0,p]$ with the property that the function $\omega^{*}_g(x,t)$ 
(as a function of $t$) is constant on every $I_n$.
Then $\omega_g(x,a_n)=\omega_g(x,b_n)$ and $\omega_g(x,t)\geq \omega^{*}_g(x,t)$ for $t\in I_n$ for every $n\in A$.
We will consider only the (most difficult) case, when $\liminf_{n\to\infty} a_n=0$, the other cases can be resolved similarly.
So suppose that there are $n_1,n_2,\dots$ such that $c_i:=a_{n_i}\searrow 0$.

Define real functions $\kappa_{n}$ on $[0,p]$, $n\in\en$ by
$$
\kappa_n(t):=
\begin{cases}
\omega_g(x,a_k) & \text{if}\quad t\in I_k \quad\text{and}\quad k\leq n,\\
\omega_g(x,t)  & \text{otherwise}.
\end{cases}
$$
Then $\kappa_{n}\searrow\omega^{*}_g(x,\cdot )$ as $n\to\infty$.
Define
$$
\delta(t):=\frac{h(x+t,x-t)+h(x-t,x+t)-2h(x)}{t}.
$$
Then using \eqref{skonstantou} together with Lebesgue's Monotone Convergence Theorem
we obtain that 
\begin{equation}\label{odhadmon}
\delta(p)-\delta(c_i)\geq -\sup_n\int\limits_{c_i}^{p}\frac{\kappa_{n}(t)}{t^2}\;dt=-\int\limits_{c_i}^{p}\lim_{n\to\infty}\frac{\kappa_{n}(t)}{t^2}\;dt=-\int\limits_{c_i}^{p}\frac{\omega^{*}_g(x,t)}{t^2}\;dt.
\end{equation}
Since the value $\omega^*_g(x,t)$ is always non-positive we obtain similarly as in the case of formula \eqref{necess1} that
$$
2\sqrt{2}L\geq \delta(p)\geq-\liminf_{i\to\infty}\int_{c_i}^{p}\frac{\omega^{*}_g(x,t)}{t^2}\;dt=-\int_{0}^{p}\frac{\omega^*_g(x,t)}{t^2}\;dt,
$$ 
where $L$ is the Lipschitz constant of $h$ on $[a-1,b+1]^2$.

It remains to estimate the integral from $p$ to $1$.
Let $K$ be the Lipschitz constant of $g$ on $[a-1,b+1]$.
Then $\omega_g(x,p)\geq -2p\cdot K$ and hence
$$
-\int\limits_{p}^1\frac{\omega^{*}_g(x,t)}{t^2}\;dt\leq 2p\cdot K\int\limits_{p}^1\frac{1}{t^2}\;dt=2p\cdot K\left(\frac{1}{p}-1\right)=2K(1-p)\leq 2K
$$
so we are done since $K$ depends only on $I$ and $g$.
\end{proof}

\begin{remark}\label{remint}
From Theorem~\ref{necess2} one can easily see that under the assumptions of Proposition~\ref{propdiff} the integral
$$
\int_{0}^{1}\frac{\omega_g(x,t)}{t^2}\;dt
$$
exists.
\end{remark}

\begin{remark}\label{remmostgeneral}
If we decided to push this direction of finding necessary conditions for a real function to be the trace of a separately convex function to its limits, we might formulate it as follows:

{\it Let $g$ be a locally Lipschitz function on $\er$. Suppose that there is a separately convex function $h$ on $\er^2$ such that $g(t)=h(t,t)$ for every $t\in\er$. 
Then for every bounded interval $I\subset\er$, there is a constant $C(g,I)$ such that }
\begin{equation}\label{mostgenfor}
\begin{aligned}
-\sum_{i=1}^{m-1}\Biggl[ &\frac{p_i-p_{i+1}}{(p_i+r_{i+1})(p_{i+1}+r_{i+1})}\cdot \left(g(x+r_{i+1})-g(x)\right)\\
& +\frac{r_i-r_{i+1}}{(p_i+r_{i+1})(p_i+r_i)}\cdot \left(g(x-p_i)-g(x)\right)\Biggr]<C(g,I)
\end{aligned}
\end{equation}
{\it for every $x\in I$, $m\in\en$ and every pair of decreasing sequences $\{p_i\}_{i=1}^m$ and $\{r_i\}_{i=1}^m$ from $(0,1]$.}
Indeed, it is sufficient (for a fixed $x$) to consider formula \eqref{mostgeneral} for $f$ defined by $f(u,v):=h(u+x,v+x)-h(x,x)$ with the choice $p=p_i$, $q=p_{i+1}$, $r=r_i$, $s=r_{i+1}$ which gives us
$$
\begin{aligned}
\frac{f(-p_i,r_i)}{p_i+r_i}-\frac{f(-p_{i+1},r_{i+1})}{p_{i+1}+r_{i+1}} \geq &-\frac{p_i-p_{i+1}}{(p_i+r_{i+1})(p_{i+1}+r_{i+1})}\cdot f(r_{i+1},r_{i+1})\\
&-\frac{r_i-r_{i+1}}{(p_i+r_{i+1})(p_i+r_i)}\cdot f(-p_i,-p_i).
\end{aligned}
$$
Now, it remains to take a sum over $i$ to obtain 
$$
\begin{aligned}
\frac{f(-p_1,r_1)}{p_1+r_1}-\frac{f(-p_{m},r_{m})}{p_{m}+r_{m}} \geq -\sum_{i=1}^{m-1}\Biggl[ &\frac{p_i-p_{i+1}}{(p_i+r_{i+1})(p_{i+1}+r_{i+1})}\cdot f(r_{i+1},r_{i+1})\\
&+\frac{r_i-r_{i+1}}{(p_i+r_{i+1})(p_i+r_i)}\cdot f(-p_i,-p_i)\Biggr]
\end{aligned}
$$
and use the local Lipschitzness of $h$. 
There doesn't seem to be, however, a direct geometric interpretation of this condition similar to Theorem~\ref{necess2}.
It might be worth mentioning though that the formula \eqref{mostgeneral} also has an integral-like form
\begin{equation}\label{mostgeneralint}
\frac{f(-p,r)}{p+r}\geq \frac{f(-q,s)}{q+s} -\int\limits_{q+s}^{p+s}\frac{f(s,s)}{t^2}\;dt -\int\limits_{p+s}^{p+r}\frac{f(-p,-p)}{t^2}\;dt.
\end{equation}
\end{remark}

\begin{remark}
It is also worth noting that (due to the symmetry) the formula \eqref{gammadelta} holds when replacing $\delta$ with $\gamma$ and vice versa, this gives us
\begin{equation}\label{gammadeltasym}
\frac{\gamma(r)-\gamma(s)}{r-s}\geq  -\frac{\delta(r)+\delta(s)}{r+s}.
\end{equation}
Considering the limit $s\to r-$, the formula \eqref{gammadeltasym} implies that
\begin{equation}\label{alphabetasym}
\beta(r)\geq  \alpha(r)-r\alpha'(r),
\end{equation}
provided $\alpha'(r)$ exists.
This inequality sometimes gives a better estimate than \eqref{necess1}. 
For instance for the function $g(t)=-|t|^3$ the formula \eqref{necess1} gives 
$$
f(t,-t)+f(-t,t)\geq t\int_{0}^{t}2s\;ds =t^3, \quad t>0,
$$
however, the formula \eqref{alphabetasym} gives
$$
f(t,-t)+f(-t,t)\geq g(t)+g(-t)-t(g'(t)-g'(-t))=-2t^3+6t^3=4t^3, \quad t>0.
$$
This observation is also implicitly included in the proof of Lemma~\ref{smoothcase2}, namely in the definition of the function $\eta$.
\end{remark}

\section{Extension lemmata}\label{sec5}

We start this section with a general extension lemma which we then apply in two particular situations (Lemmata \ref{extension} and \ref{smoothcase2}). The lemma is based on a simple geometric idea. If the values on the diagonals $ x = \pm y $ are given, we consider the function which is affine on every line segment which is parallel to a coordinate axis with the endpoints belonging to these diagonals (i.e., the dashed line segments in Figure~\ref{fig:alphabeta}(a)).

In fact, the first extension result we obtained was Lemma \ref{extension} in which the values on the diagonal $ x = -y $ are extracted from the formula (\ref{necess1}). Note that the second extension result Lemma \ref{smoothcase2} assumes the symmetric formula (\ref{alphabetasym}) at the same time.

\begin{lemma} \label{extensionasym}
Let $ \alpha _{1}, \alpha _{2}, \beta _{1}, \beta _{2} : \mathbb{R} \rightarrow \mathbb{R} $ be locally Lipschitz even functions such that, for all choices $ i, j \in \{ 1, 2 \} $,
$$ \lim _{x \rightarrow 0} \frac{\alpha _{i}(x)}{x} = 0, \quad \lim _{x \rightarrow 0} \frac{\beta _{j}(x)}{x} = 0 $$
and the functions
$$ \alpha _{i}'(x) + \frac{1}{x} \big( \beta _{j}(x) - \alpha _{i}(x) \big) , \quad \beta _{j}'(x) + \frac{1}{x} \big( \alpha _{i}(x) - \beta _{j}(x) \big) , $$
are non-decreasing on their domains for $ x > 0 $.

Then the function $ F : \mathbb{R}^{2} \rightarrow \mathbb{R} $ given by
$$ F(x,y) = \left\{\begin{array}{ll}
\frac{1}{2x} [(x+y)\alpha _{1}(x) + (x-y)\beta _{1}(x)], & \quad x \geq |y|, \, x \neq 0, \\
\frac{1}{2x} [(x+y)\alpha _{2}(x) + (x-y)\beta _{2}(x)], & \quad x \leq -|y|, \, x \neq 0, \\
\frac{1}{2y} [(y+x)\alpha _{1}(y) + (y-x)\beta _{2}(y)], & \quad y > |x|, \\
\frac{1}{2y} [(y+x)\alpha _{2}(y) + (y-x)\beta _{1}(y)], & \quad y < -|x|, \\
0, & \quad x = y = 0,
\end{array} \right. $$
is separately convex.
\end{lemma}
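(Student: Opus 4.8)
The plan is to check that $F$ is convex on every line parallel to a coordinate axis. It is convenient first to pass from $\alpha_i,\beta_j$ to the odd, locally Lipschitz functions $a_i(x)=\alpha_i(x)/x$, $b_j(x)=\beta_j(x)/x$ on $(0,\infty)$; the four branches of $F$ then take the simple form $F(x,y)=\tfrac{x+y}{2}a_1(x)+\tfrac{x-y}{2}b_1(x)$ for $x\ge|y|$, $F(x,y)=\tfrac{y+x}{2}a_1(y)+\tfrac{y-x}{2}b_2(y)$ for $y>|x|$, and the two symmetric ones. One checks at once that $(x,y)\mapsto F(y,x)$ and $(x,y)\mapsto F(-x,-y)$ arise from the same recipe applied to the data obtained by the transpositions $\beta_1\leftrightarrow\beta_2$ and $(\alpha_1,\beta_1)\leftrightarrow(\alpha_2,\beta_2)$, both of which preserve the hypotheses; so it is enough to show that $y\mapsto F(a,y)$ is convex for every $a\ge 0$. (That $F$ is continuous is clear except at the origin, where it follows from $\alpha_i(x),\beta_j(x)\to 0$.)

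\textbf{Auxiliary monotone functions.} Consider $A_{ij}(x)=\alpha_i'(x)+\tfrac1x(\beta_j(x)-\alpha_i(x))=xa_i'(x)+b_j(x)$ and $B_{ij}(x)=\beta_j'(x)+\tfrac1x(\alpha_i(x)-\beta_j(x))=xb_j'(x)+a_i(x)$, which by hypothesis coincide a.e.\ on $(0,\infty)$ with non-decreasing functions (keep the same names). I would first prove that $A_{ij}(0^{+})=B_{ij}(0^{+})=0$, hence $A_{ij}\ge 0$ and $B_{ij}\ge 0$ throughout $(0,\infty)$: since $\alpha_i$ is Lipschitz near $0$ with $\alpha_i(0)=0$ and $\tfrac{\beta_j(t)-\alpha_i(t)}{t}\to 0$, we get $\tfrac1x\int_0^xA_{ij}=\tfrac{\alpha_i(x)}{x}+\tfrac1x\int_0^x\tfrac{\beta_j(t)-\alpha_i(t)}{t}\,dt\to 0$, and for a non-decreasing $A_{ij}$ the sandwich $A_{ij}(0^{+})\le\tfrac1x\int_0^xA_{ij}\le A_{ij}(x)$ then forces $A_{ij}(0^{+})=0$; likewise for $B_{ij}$. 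This is the only place where the assumptions $\alpha_i(x)/x\to 0$, $\beta_j(x)/x\to 0$ are used.

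\textbf{Convexity on a vertical line, interior.} Fix $a\ge 0$. Along $x=a$ the function $y\mapsto F(a,y)$ is affine on $[-a,a]$ (which lies in the region $x\ge|y|$), with slope $m=\tfrac12(a_1(a)-b_1(a))$; on $[a,\infty)$ it equals $\psi^{+}(y)=\tfrac{y+a}{2}a_1(y)+\tfrac{y-a}{2}b_2(y)$, and on $(-\infty,-a]$ it equals the reflection of $\tfrac{y+a}{2}b_1(y)+\tfrac{y-a}{2}a_2(y)$. To see $\psi^{+}$ is convex on $[a,\infty)$ I would rewrite $(\psi^{+})'$ using $xa_1'=A_{12}-b_2$ and $xb_2'=B_{12}-a_1$ and differentiate once more in the sense of distributions; the drift terms cancel, leaving $d\big((\psi^{+})'\big)=\tfrac{y+a}{2y}\,dA_{12}+\tfrac{y-a}{2y}\,dB_{12}$ on $(a,\infty)$, which is $\ge 0$ since both coefficients are non-negative there. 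The branch on $(-\infty,-a]$ is identical with $A_{21},B_{21}$ replacing $A_{12},B_{12}$; and the case $a=0$, where the two branches are $\tfrac12(\alpha_1+\beta_2)$ on $[0,\infty)$ and $\tfrac12(\alpha_2+\beta_1)$ on $(-\infty,0]$ with derivatives that are non-decreasing on the respective half-lines and tend to $0$ at the origin, follows from this too.

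\textbf{The junctions --- the main obstacle.} It remains to match one-sided slopes at $y=\pm a$ for $a>0$, and this I expect to be the delicate step. Computing $(\psi^{+})'_{+}(a)$ from the explicit formula (again via $xa_1'=A_{12}-b_2$) gives $(\psi^{+})'_{+}(a)-m=A_{12}(a^{+})+\tfrac12(b_1(a)-b_2(a))$, which, using the identity $A_{11}=A_{12}+b_1-b_2$, equals $\tfrac12\big(A_{11}(a^{+})+A_{12}(a^{+})\big)$ and is therefore $\ge 0$ by the positivity established above; the junction at $y=-a$ reduces in the same way to $\tfrac12\big(B_{11}(a^{+})+B_{21}(a^{+})\big)\ge 0$. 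The point to stress is that each junction inequality is an \emph{average} of two of the inequalities $A_{i'j'}(a^{+})\ge 0$ or $B_{i'j'}(a^{+})\ge 0$, so mere monotonicity of the $A_{ij},B_{ij}$ would not be enough --- their normalization at $0$, hence their non-negativity, is exactly what the argument requires. Combining the interior convexity with the slope matching shows $F(a,\cdot)$ is convex, and, via the symmetries of the first step, so is every coordinate-line restriction of $F$.
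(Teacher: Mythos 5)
Your proof is correct, and it follows the same overall architecture as the paper's: reduce to one-variable convexity via the two transposition symmetries, prove interior convexity on each piece by showing the first derivative is non-decreasing, and handle the diagonal junctions by establishing that the auxiliary combinations $A_{ij}(x)=\alpha_i'(x)+\tfrac1x(\beta_j(x)-\alpha_i(x))$ and $B_{ij}$ are non-negative (via their monotonicity plus the limit $0$ at the origin, which both you and the paper deduce from $\alpha_i(x)/x,\beta_j(x)/x\to 0$). The genuine difference is in how the interior convexity is realized. The paper isolates a combinatorial inequality (Claim~\ref{extensionasymclaim}), applies it separately to $\gamma=A_{11}$ and $\delta=B_{11}$, and reassembles the pieces using an auxiliary function $\varrho$ and its derivative identity. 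You instead compute the distributional second derivative of $\psi^+$ directly, observe that the ``drift'' terms cancel, and read off $d\bigl((\psi^+)'\bigr)=\frac{y+a}{2y}\,dA_{12}+\frac{y-a}{2y}\,dB_{12}$, whose non-negativity is immediate; after integrating Claim~\ref{extensionasymclaim} by parts it amounts to $\int_p^q(1+y/t)\,d\gamma\ge 0$, so the two routes are equivalent in substance, but yours is more transparent and avoids the separate Claim. A secondary improvement is that your junction step tracks the right-limits $A_{ij}(a^+)$ of the monotone representatives, which pins down the slope-matching at \emph{every} $a>0$; the paper writes ``for a.e.\ $a>0$'', which requires a (standard but unstated) continuity argument to conclude.
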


\begin{claim} \label{extensionasymclaim}
Let $ E $ be a subset of $ (0, \infty ) $ for which $ \lambda ((0, \infty ) \setminus E) = 0 $ and let $ \gamma : E \rightarrow \mathbb{R} $ be non-decreasing on $ E $. Then
$$ \gamma (q) - \gamma (p) + y \bigg[ \frac{\gamma (q)}{q} - \frac{\gamma (p)}{p} + \int _{p}^{q} \frac{\gamma (t)}{t^{2}} \, dt \bigg] \geq 0 $$
whenever $ |y| \leq p \leq q $ and $ p, q \in E $.
\end{claim}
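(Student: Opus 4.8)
The plan is to reduce the inequality to an elementary estimate for a genuine non-decreasing function on a full interval, and then to dispose of the two signs of $ y $ separately.

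First I would replace $ \gamma $ by a non-decreasing function on all of $ (0, \infty ) $. Setting $ \tilde{\gamma }(t) := \sup \{ \gamma (s) : s \in E, \ s \le t \} $, the hypothesis $ \lambda ((0, \infty ) \setminus E) = 0 $ guarantees that for each $ t > 0 $ the set $ \{ s \in E : s \le t \} $ is nonempty, and, comparing with some point of $ E $ in $ (t, t+1) $, that the supremum is finite; moreover $ \tilde{\gamma } $ is non-decreasing on $ (0, \infty ) $ and $ \tilde{\gamma } = \gamma $ on $ E $, in particular at the points $ p $ and $ q $. Since $ (0,\infty)\setminus E $ is Lebesgue-null, $ \int _{p}^{q} \gamma (t)/t^{2}\,dt = \int _{p}^{q} \tilde{\gamma }(t)/t^{2}\,dt $, and this integral is finite because $ \tilde{\gamma } $ is bounded on $ [p, q] $.

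Next, write $ c := \gamma (p) = \tilde{\gamma }(p) $ and pull the constant $ c $ out of the integral, using $ \int _{p}^{q} t^{-2}\,dt = p^{-1} - q^{-1} $. A short computation rewrites the bracketed term as
\[
B := \frac{\gamma (q)}{q} - \frac{\gamma (p)}{p} + \int _{p}^{q} \frac{\gamma (t)}{t^{2}}\,dt = \frac{\tilde{\gamma }(q) - c}{q} + \int _{p}^{q} \frac{\tilde{\gamma }(t) - c}{t^{2}}\,dt .
\]
For $ t \in [p, q] $ monotonicity gives $ 0 \le \tilde{\gamma }(t) - c \le \tilde{\gamma }(q) - c $; inserting the lower bound shows $ B \ge 0 $, and inserting the upper bound (together with $ \int _{p}^{q} t^{-2}\,dt = p^{-1} - q^{-1} $ once more) yields
\[
0 \le B \le \frac{\tilde{\gamma }(q) - c}{p} = \frac{\gamma (q) - \gamma (p)}{p} .
\]

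Finally, put $ A := \gamma (q) - \gamma (p) $, which is $ \ge 0 $ since $ \gamma $ is non-decreasing on $ E $ and $ p \le q $ with $ p, q \in E $; the claim is exactly $ A + yB \ge 0 $. If $ y \ge 0 $ this is immediate from $ A, B \ge 0 $. If $ y < 0 $, then $ |y| \le p $ gives $ A + yB = A - |y| B \ge A - |y|\,A/p = A(1 - |y|/p) \ge 0 $, which completes the argument. I do not expect any serious obstacle here; the only points requiring a little care are the passage to the non-decreasing extension $ \tilde{\gamma } $ (so that the pointwise bounds on $ [p, q] $ are genuinely available, and not merely on $ E $) and the bookkeeping about which values of $ \gamma $ enter where, since the statement mixes the endpoint values $ \gamma (p), \gamma (q) $ with an integral over all of $ [p, q] $.
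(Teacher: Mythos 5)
Your proof is correct and takes essentially the same approach as the paper: both arguments rest on the linearity in $y$ together with the two-sided estimate $\gamma(p)\big(\tfrac{1}{p}-\tfrac{1}{q}\big)\le\int_p^q\gamma(t)t^{-2}\,dt\le\gamma(q)\big(\tfrac{1}{p}-\tfrac{1}{q}\big)$ coming from monotonicity. The paper simply substitutes $y=\pm p$ and computes directly, whereas you first normalize by $c=\gamma(p)$ to obtain $0\le B\le(\gamma(q)-\gamma(p))/p$ and then split on the sign of $y$; the auxiliary extension $\tilde\gamma$ is a tidy (if not strictly necessary, since the bounds hold a.e.) way to get the pointwise estimates on all of $[p,q]$.
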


\begin{proof}
It is sufficient to prove the inequality for $ y = \pm p $. Since
$$ \int _{p}^{q} \frac{\gamma (t)}{t^{2}} \, dt \geq \int _{p}^{q} \frac{\gamma (p)}{t^{2}} \, dt = \gamma (p) \cdot \Big( \frac{1}{p} - \frac{1}{q} \Big) , $$
we can write
\begin{align*}
\gamma (q) - \gamma (p) + p & \bigg[ \frac{\gamma (q)}{q} - \frac{\gamma (p)}{p} + \int _{p}^{q} \frac{\gamma (t)}{t^{2}} \, dt \bigg] \\
 & \geq p \bigg[ \frac{\gamma (q)}{q} - \frac{\gamma (p)}{p} + \frac{\gamma (p)}{p} - \frac{\gamma (p)}{q} \bigg] = \frac{p}{q} \big( \gamma (q) - \gamma (p) \big) \geq 0.
\end{align*}
Since
$$ \int _{p}^{q} \frac{\gamma (t)}{t^{2}} \, dt \leq \int _{p}^{q} \frac{\gamma (q)}{t^{2}} \, dt = \gamma (q) \cdot \Big( \frac{1}{p} - \frac{1}{q} \Big) , $$
we can write
\begin{align*}
\gamma (q) - \gamma (p) - p & \bigg[ \frac{\gamma (q)}{q} - \frac{\gamma (p)}{p} + \int _{p}^{q} \frac{\gamma (t)}{t^{2}} \, dt \bigg] \\
 & \geq \gamma (q) - \gamma (p) - \frac{p}{q} \gamma (q) + \gamma (p) - \gamma (q) + \frac{p}{q} \gamma (q) = 0.
\end{align*}
\end{proof}

\begin{proof}[Proof of Lemma \ref{extensionasym}]
(I) We prove first that $ F $ is separately convex on the quarters from its definition. We consider the set $ \{ (x,y) : x \geq |y| \} $ only, since the proof for other quarters is essentially the same. For the simplicity, we write $ \alpha , \beta $ instead of $ \alpha _{1}, \beta _{1} $. Notice that $ F $ is affine in the direction $ y $, so we just need to show that $ F $ is convex in the direction $ x $.

Let $ y $ be fixed. For almost every $ x > |y| $, we have
\begin{equation} \label{comp1} F_{x}(x, y) = \frac{1}{2} \big( \alpha '(x) + \beta '(x) \big) + \frac{y}{2x} \big( \alpha '(x) - \beta '(x) \big) - \frac{y}{2x^{2}} \big( \alpha (x) - \beta (x) \big) . \end{equation}
We want to show that this partial derivative is non-decreasing on its domain, that is to prove that
$$ \alpha '(p) + \beta '(p) + \frac{y}{p} \big( \alpha '(p) - \beta '(p) \big) - \frac{y}{p^{2}} \big( \alpha (p) - \beta (p) \big) $$
$$ \leq \alpha '(q) + \beta '(q) + \frac{y}{q} \big( \alpha '(q) - \beta '(q) \big) - \frac{y}{q^{2}} \big (\alpha (q) - \beta (q) \big) $$
whenever $ |y| < p \leq q $ and the derivatives exist.

Define
$$ \gamma (x) = \alpha '(x) + \frac{1}{x} \big( \beta (x) - \alpha (x) \big) , $$
$$ \delta (x) = \beta '(x) + \frac{1}{x} \big( \alpha (x) - \beta (x) \big) , $$
$$ \varrho (x) = \frac{1}{x^{2}} \big( \alpha (x) - \beta (x) \big) . $$
We can compute
$$ \varrho '(x) = \frac{1}{x^{2}} \big( \alpha '(x) - \beta '(x) \big) - \frac{2}{x^{3}} \big( \alpha (x) - \beta (x) \big) = \frac{1}{x^{2}} \big( \gamma (x) - \delta (x) \big) . $$
By Claim \ref{extensionasymclaim},
$$ \gamma (q) - \gamma (p) + y \bigg[ \frac{\gamma (q)}{q} - \frac{\gamma (p)}{p} + \int _{p}^{q} \frac{\gamma (t)}{t^{2}} \, dt \bigg] \geq 0, $$
$$ \delta (q) - \delta (p) - y \bigg[ \frac{\delta (q)}{q} - \frac{\delta (p)}{p} + \int _{p}^{q} \frac{\delta (t)}{t^{2}} \, dt \bigg] \geq 0. $$
Summing up these inequalities and using the formula for $ \varrho ' $, we obtain
$$ \gamma (q) + \delta (q) - \gamma (p) - \delta (p) + y \bigg[ \frac{\gamma (q)}{q} - \frac{\delta (q)}{q} - \frac{\gamma (p)}{p} + \frac{\delta (p)}{p} + \int _{p}^{q} \varrho '(t) \, dt \bigg] \geq 0. $$
That is,
$$ \alpha '(q) + \beta '(q) - \alpha '(p) - \beta '(p) + y \bigg[ \frac{1}{q} \big( \alpha '(q) - \beta '(q) \big) - \frac{2}{q^{2}} \big( \alpha (q) - \beta (q) \big) $$
$$ - \frac{1}{p} \big( \alpha '(p) - \beta '(p) \big) + \frac{2}{p^{2}} \big( \alpha (p) - \beta (p) \big) + \frac{1}{q^{2}} \big( \alpha (q) - \beta (q) \big) - \frac{1}{p^{2}} \big( \alpha (p) - \beta (p) \big) \bigg] \geq 0, $$
which leads quickly to the desired inequality.

(II) It remains to show that the separate convexity is not disrupted on the diagonals $ x = \pm y $. Due to the symmetry, we show only that
$$ F_{x-}(a, a) \leq F_{x+}(a, a) \quad \textrm{for a.e. $ a > 0 $.} $$
Using (\ref{comp1}), we rewrite this requirement in the form
$$ \frac{1}{2a} [\alpha _{1}(a) - \beta _{2}(a)] \leq \alpha _{1}'(a) - \frac{1}{2a} \big( \alpha _{1}(a) - \beta _{1}(a) \big) \quad \textrm{for a.e. $ a > 0 $.} $$
It is sufficient to realize that
$$ \alpha _{1}'(x) + \frac{1}{x} \big( \beta _{1}(x) - \alpha _{1}(x) \big) \geq 0 \quad \textrm{and} \quad \alpha _{1}'(x) + \frac{1}{x} \big( \beta _{2}(x) - \alpha _{1}(x) \big) \geq 0 $$
for $ x > 0 $ on the domain of these functions. These functions are assumed to be non-decreasing for $ x > 0 $. They have the limits from the right at $ 0 $, considered with respect to the domain of $ \alpha _{1}' $, and we check that these limits are equal to $ 0 $. Since $ \lim _{x \rightarrow 0+} \frac{1}{x} (\beta _{j}(x) - \alpha _{i}(x)) = 0 $, the function $ \alpha _{1}' $ itself has the limit. The limit must be $ 0 $, as $ \lim _{x \rightarrow 0+} \frac{1}{x} \alpha _{1}(x) = 0 $.
\end{proof}

\begin{lemma} \label{extension}
Let $ \alpha : \mathbb{R} \rightarrow \mathbb{R} $ be a locally Lipschitz even function such that
\begin{itemize}
\item $ \lim _{x \rightarrow 0} \alpha (x) /x = 0 $,
\item the integral $ \int _{0}^{1} \alpha (x) \, dx /x^{2} $ is convergent,
\item the function $ \alpha '(x)/x $ is non-decreasing on its domain for $ x > 0 $.
\end{itemize}
Let $ \beta : \mathbb{R} \rightarrow \mathbb{R} $ be given by
$$ \beta (x) = -x \int _{0}^{x} \frac{\alpha (t)}{t^{2}} \, dt $$
and $ F : \mathbb{R}^{2} \rightarrow \mathbb{R} $ be given by
$$ F(x,y) = \left\{\begin{array}{ll}
\frac{1}{2x} [(x+y)\alpha (x) + (x-y)\beta (x)], & \quad |x| \geq |y|, \, x \neq 0, \\
\frac{1}{2y} [(y+x)\alpha (y) + (y-x)\beta (y)], & \quad |x| < |y|, \\
0, & \quad x = y = 0.
\end{array} \right. $$
Then the function $ F $ is separately convex.
\end{lemma}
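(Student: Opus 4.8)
The plan is to obtain Lemma~\ref{extension} as a special case of the general result Lemma~\ref{extensionasym}, with the choice $ \alpha_1 = \alpha_2 = \alpha $ and $ \beta_1 = \beta_2 = \beta $. First I would check that, under this choice, the piecewise definition of $ F $ in Lemma~\ref{extensionasym} collapses exactly onto the one stated here: the two regions $ \{x \ge |y|,\ x \ne 0\} $ and $ \{x \le -|y|,\ x \ne 0\} $ merge into $ \{|x| \ge |y|,\ x \ne 0\} $ with the common value $ \frac{1}{2x}[(x+y)\alpha(x) + (x-y)\beta(x)] $, and similarly $ \{y > |x|\} $ and $ \{y < -|x|\} $ merge into $ \{|x| < |y|\} $. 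Hence it remains only to verify the hypotheses of Lemma~\ref{extensionasym} for $ \alpha $ and $ \beta $.

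Second, I would establish the elementary properties of $ \beta $. Setting $ \Phi(x) = \int_0^x \alpha(t)/t^2 \, dt $ (which is well defined for all $ x $, since convergence at $ 0 $ is assumed and $ \alpha(t)/t^2 $ is continuous away from $ 0 $), we have $ \beta(x) = -x\,\Phi(x) $. The substitution $ t \mapsto -t $ together with the evenness of $ \alpha $ shows $ \Phi $ is odd, so $ \beta $ is even. On $ (0,\infty) $, and by evenness on $ (-\infty,0) $, $ \beta $ is $ \C^1 $ with $ \beta'(x) = -\Phi(x) - \alpha(x)/x = \beta(x)/x - \alpha(x)/x $; since $ \alpha(x)/x \to 0 $ and $ \Phi(x) \to 0 $ as $ x \to 0 $, both $ \beta $ and $ \beta' $ are bounded near $ 0 $, so $ \beta $ is locally Lipschitz on $ \er $, and $ \lim_{x \to 0}\beta(x)/x = -\lim_{x\to 0}\Phi(x) = 0 $.

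Third, I would verify the two monotonicity conditions of Lemma~\ref{extensionasym}. The condition on $ \beta'(x) + \tfrac{1}{x}(\alpha(x)-\beta(x)) $ is trivial: the formula for $ \beta' $ above gives that this expression is identically $ 0 $ for $ x>0 $. For the condition on $ \gamma(x) := \alpha'(x) + \tfrac{1}{x}(\beta(x)-\alpha(x)) = \alpha'(x) - \Phi(x) - \alpha(x)/x $ I would argue as follows. Since $ \alpha $, being locally Lipschitz, is absolutely continuous on compact subintervals of $ (0,\infty) $, the function $ t \mapsto \alpha(t)/t $ is too, and integrating its derivative gives the identity $ \frac{\alpha(q)}{q} - \frac{\alpha(p)}{p} + \int_p^q \frac{\alpha(t)}{t^2}\,dt = \int_p^q \frac{\alpha'(t)}{t}\,dt $ for $ 0 < p \le q $. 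Hence, for $ p \le q $ in the domain of $ \alpha' $ and with $ h(t) := \alpha'(t)/t $,
\[ \gamma(q) - \gamma(p) = \big(\alpha'(q) - \alpha'(p)\big) - \int_p^q \frac{\alpha'(t)}{t}\,dt = q\,h(q) - p\,h(p) - \int_p^q h(t)\,dt. \]
Since $ h $ is non-decreasing on its domain, $ \int_p^q h(t)\,dt \le (q-p)\,h(q) $, so $ \gamma(q) - \gamma(p) \ge q\,h(q) - p\,h(p) - (q-p)\,h(q) = p\,(h(q) - h(p)) \ge 0 $; thus $ \gamma $ is non-decreasing for $ x > 0 $. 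With all hypotheses checked, Lemma~\ref{extensionasym} yields that $ F $ is separately convex.

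The main obstacle will be this last computation: one must be careful that $ \alpha' $ exists only almost everywhere, that $ t \mapsto \alpha(t)/t $ is genuinely absolutely continuous on $ [p,q] \subset (0,\infty) $ so that the integral identity is valid, and that the hypothesis ``$ \alpha'(x)/x $ non-decreasing on its domain'' suffices to run the estimate $ \int_p^q h(t)\,dt \le (q-p)\,h(q) $. The remaining steps --- matching the four regions of the piecewise formula and the basic properties of $ \beta $ --- are routine.
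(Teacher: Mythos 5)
Your proposal is correct and follows essentially the same route as the paper: both reduce to Lemma~\ref{extensionasym} (with $\alpha_1=\alpha_2=\alpha$, $\beta_1=\beta_2=\beta$), compute $\beta'(x)=\frac{1}{x}(\beta(x)-\alpha(x))$ so that $\delta\equiv 0$, and then prove monotonicity of $\gamma$ by the integral estimate $\int_p^q \frac{\alpha'(t)}{t}\,dt \le (q-p)\frac{\alpha'(q)}{q}$, concluding $\gamma(q)-\gamma(p)\ge p\big(\frac{\alpha'(q)}{q}-\frac{\alpha'(p)}{p}\big)\ge 0$. The only cosmetic difference is that the paper reaches the intermediate formula $\gamma(q)-\gamma(p)=\alpha'(q)-\alpha'(p)-\int_p^q \frac{\alpha'(t)}{t}\,dt$ via $\beta''=-\alpha'/x$ while you get it by integration by parts on $\alpha(t)/t$; these are the same computation. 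Your extra verification that $\beta$ is locally Lipschitz, even, and satisfies $\beta(x)/x\to 0$ is a nice completeness touch that the paper leaves implicit.
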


\begin{proof}
Due to Lemma \ref{extensionasym}, it is sufficient to show that the functions
$$ \gamma (x) = \alpha '(x) + \frac{1}{x} \big( \beta (x) - \alpha (x) \big) , \quad \delta (x) = \beta '(x) + \frac{1}{x} \big( \alpha (x) - \beta (x) \big) , $$
are non-decreasing on their domains for $ x > 0 $. We can compute
\begin{equation} \label{comp3} \beta '(x) = - \int _{0}^{x} \frac{\alpha (t)}{t^{2}} \, dt - x \cdot \frac{\alpha (x)}{x^{2}} = \frac{1}{x} \big( \beta (x)-\alpha (x) \big) , \end{equation}
\begin{equation} \label{comp4} \beta ''(x) = - \frac{\alpha (x)}{x^{2}} - \frac{1}{x^{2}} \big( \alpha '(x) x - \alpha (x) \big) = - \frac{1}{x} \alpha '(x). \end{equation}
It follows that
$$ \gamma (x) = \alpha '(x) + \beta '(x) \quad \textrm{and} \quad \delta (x) = 0. $$
Now, let $ 0 < p < q $ be elements of the domain of $ \gamma $. Then
\begin{eqnarray*}
\gamma (q) - \gamma (p) & = & \alpha '(q) - \alpha '(p) + \int _{p}^{q} \beta ''(x) \, dx = \alpha '(q) - \alpha '(p) - \int _{p}^{q} \frac{\alpha '(x)}{x} \, dx \\
 & \geq & \alpha '(q) - \alpha '(p) - \int _{p}^{q} \frac{\alpha '(q)}{q} \, dx = \alpha '(q) - \alpha '(p) - (q - p) \cdot \frac{\alpha '(q)}{q} \\
 & = & p \cdot \Big( \frac{\alpha '(q)}{q} - \frac{\alpha '(p)}{p} \Big) \geq 0.
\end{eqnarray*}
\end{proof}

\begin{remark} \label{extensionbound}
It might be useful to have an upper bound for the function $ F $ from Lemma~\ref{extension}. It is possible to prove that, if there is a $ p > 0 $ such that $ \alpha '(x)/x $ is constant on $ [p, \infty ) $, then
$$ F(x,y) \leq \alpha \Big( \frac{1}{2}(x+y) \Big) + \beta \Big( \frac{1}{2}(x-y) \Big) + C, \quad x, y \in \mathbb{R}, $$
where $ C = \frac{1}{2} p \alpha '(p) - \alpha (p) $.
\end{remark}

\begin{lemma} \label{smoothcase2}
For every $ G \in \C^{2}(\mathbb{R}) $, there is a separately convex function $ F : \mathbb{R}^{2} \rightarrow \mathbb{R} $ such that $ F(t, t) = G(t) $ for each $ t \in \mathbb{R} $.
\end{lemma}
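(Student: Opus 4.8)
The plan is to derive this from Lemma~\ref{extensionasym}, with a construction that structurally mimics the proof of Lemma~\ref{extension}. First I would make two harmless reductions. Writing $G = \ell + \tilde G$ with $\ell(u) = G(0) + G'(0)u$, the affine function $\ell$ is the trace of the (affine, hence separately convex) map $(x,y) \mapsto G(0) + \tfrac12 G'(0)(x+y)$; since traces add, it suffices to treat $\tilde G$, so I may assume $G(0) = G'(0) = 0$. In that case the even functions
$$ \alpha _{1}(x) := G(|x|), \qquad \alpha _{2}(x) := G(-|x|) $$
belong to $\C^{2}(\er)$ (the only question is at $0$, where $G'(0) = 0$ forces the one-sided first derivatives to agree and $G''(0)$ is the common value of the second ones), with $\alpha _{i}(0) = \alpha _{i}'(0) = 0$, $\alpha _{i}''(0) = G''(0)$, so in particular $\alpha _{i}(x)/x \to 0$. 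Since the formula in Lemma~\ref{extensionasym} gives $F(t,t) = \alpha _{1}(t) = G(t)$ for $t \ge 0$ and $F(t,t) = \alpha _{2}(t) = G(-|t|) = G(t)$ for $t \le 0$, everything reduces to choosing $\beta _{1}, \beta _{2}$ so that the hypotheses of Lemma~\ref{extensionasym} hold.

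I would take $\beta _{1} = \beta _{2} = \beta$, where $\beta (x) := -x \int _{0}^{x} \eta (t)\, t^{-2}\, dt$ for a suitable even $\eta \in \C^{2}(\er)$ with $\eta (0) = \eta '(0) = 0$. Such a $\beta$ is even, locally Lipschitz, $\beta (x)/x \to 0$, and, exactly as in \eqref{comp3}--\eqref{comp4} with $\alpha$ replaced by $\eta$, one has $\beta '(x) = \tfrac1x(\beta (x) - \eta (x))$ and $\beta ''(x) = -\eta '(x)/x$ for $x \ne 0$. A short computation then gives, for each $i$,
$$ \beta '(x) + \tfrac1x\big( \alpha _{i}(x) - \beta (x) \big) = \tfrac1x\big( \alpha _{i}(x) - \eta (x) \big), $$
$$ \frac{d}{dx}\Big[ \alpha _{i}'(x) + \tfrac1x\big( \beta (x) - \alpha _{i}(x) \big) \Big] = \alpha _{i}''(x) - \frac{\alpha _{i}'(x)}{x} + \frac{\alpha _{i}(x) - \eta (x)}{x^{2}}. $$
Hence Lemma~\ref{extensionasym} applies provided $\eta$ is chosen so that, for every $x > 0$,
$$ \eta ''(x) \le \min \{ \alpha _{1}''(x), \alpha _{2}''(x) \}, \qquad \eta (x) \le \min _{i} \big\{ \alpha _{i}(x) - x\alpha _{i}'(x) + x^{2}\alpha _{i}''(x) \big\}: $$
the first bound makes $\alpha _{i} - \eta$ convex and vanishing at the origin, so $\tfrac1x(\alpha _{i} - \eta )$ is non-decreasing, and the second makes the displayed derivative non-negative. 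This is precisely where both necessary-condition formulas \eqref{necess1} and \eqref{alphabetasym} get, implicitly, respected, and this $\eta$ is the one alluded to in the remark following \eqref{alphabetasym}.

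The heart of the proof is the construction of $\eta$, and the only genuine thing to verify is that the two requirements are compatible. They are: the second requirement only ever forces $\eta$ downward, and making $\eta ''$ more negative is always permitted by the first. Concretely, I would set $\eta ''(0) := G''(0) - 1$; because $\alpha _{i}(x) - x\alpha _{i}'(x) + x^{2}\alpha _{i}''(x) = \tfrac12 G''(0)x^{2} + o(x^{2})$ as $x \to 0$, any even $\C^{2}$ function with that value of $\eta ''(0)$ satisfies the second bound for small $x > 0$, while $\eta '' < \min \{ \alpha _{1}'', \alpha _{2}'' \}$ holds near $0$ by continuity. Away from the origin one extends $\eta ''$ to a continuous even function staying below the (continuous) function $\min \{ \alpha _{1}'', \alpha _{2}'' \}$ and chosen negative enough to push $\eta$ below the (continuous on $(0,\infty)$) function $\min _{i}\{\alpha _{i} - x\alpha _{i}' + x^{2}\alpha _{i}''\}$ everywhere; this is possible precisely because that bound is already strictly above $\eta$ near $0$. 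With $\eta$, hence $\beta$, fixed, Lemma~\ref{extensionasym} yields a separately convex $F_{0}$ with $F_{0}(t,t) = G(t)$, and adding back the affine correction finishes the proof. I expect the bookkeeping in this last paragraph — choosing $\eta$ so that both inequalities hold simultaneously while keeping $\eta$ of class $\C^{2}$ with the right behaviour at $0$ — to be the main (though elementary) obstacle.
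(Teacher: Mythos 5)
Your proposal is correct and takes essentially the same approach as the paper: both reduce to Lemma~\ref{extensionasym} with $\alpha_{1}(x)=G(|x|)$, $\alpha_{2}(x)=G(-|x|)$ and $\beta_{1}=\beta_{2}=\beta$ built from an auxiliary function $\eta$ satisfying two inequalities that encode the non-decreasing hypotheses of the lemma (yours being an equivalent reparameterization, $\eta_{\text{paper}}(y)=-\eta(y)/y$). The only difference is that you argue existence of a suitable $\C^{2}$ auxiliary $\eta$ by a patching argument, whereas the paper sidesteps that bookkeeping by writing down an explicit (merely Lipschitz) $\eta(x)=-\theta(x)+\sup\{-y\alpha_{i}''(y)+\alpha_{i}'(y):0<y\leq x,\,i=1,2\}$ and verifying the two monotonicity conditions directly.
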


\begin{proof}
Without loss of generality, we suppose that $ G(0) = 0 $ and $ G'(0) = 0 $. Let $ M > 0 $ be such that $ |G''(x)| \leq M $ for $ |x| \leq 1 $. For every $ x \in \mathbb{R} $, we define
$$ \alpha _{1}(x) = G(|x|), \quad \quad \alpha _{2}(x) = G(-|x|), $$
so we have
$$ |\alpha ''_{i}(x)| \leq M, \quad |\alpha '_{i}(x)| \leq Mx, \quad |\alpha _{i}(x)| \leq \frac{1}{2} Mx^{2}, \quad i = 1, 2, \; 0 < x \leq 1. $$
For every $ x > 0 $, we furthermore define
$$ \theta _{i}(x) = \frac{1}{x} \alpha _{i}(x), \quad i = 1, 2, \quad \quad \theta (x) = \int _{0}^{x} \min \{ \theta '_{1}(y), \theta '_{2}(y) \} \, dy, $$
$$ \eta (x) = - \theta (x) + \sup \Big\{ -y \alpha ''_{i}(y) + \alpha '_{i}(y) : 0 < y \leq x, \, i = 1, 2 \Big\} . $$
Since
\begin{equation} \label{comp15} \theta '_{i}(x) = \frac{1}{x} \alpha '_{i}(x) - \frac{1}{x^{2}} \alpha _{i}(x), \end{equation}
we have
$$ |\theta '_{i}(x)| \leq \frac{3}{2} M, \quad |\theta (x)| \leq \frac{3}{2} Mx, \quad |\eta (x)| \leq \frac{7}{2} Mx, \quad i = 1, 2, \; 0 < x \leq 1. $$
So, for $ x \in \mathbb{R} $, we can define
$$ \beta (x) = \beta _{1}(x) = \beta _{2}(x) = |x| \int _{0}^{|x|} \frac{\eta (y)}{y} \, dy. $$
Since
\begin{equation} \label{comp16} \beta '(x) = \int _{0}^{x} \frac{\eta (y)}{y} \, dy + x \cdot \frac{\eta (x)}{x} = \frac{1}{x}\beta (x) + \eta (x), \quad x > 0, \end{equation}
$ \beta $ is locally Lipschitz.

To show that Lemma \ref{extensionasym} can be applied, we have to check that, given $ i \in \{ 1, 2 \} $, the functions
$$ \gamma (x) = \alpha _{i}'(x) + \frac{1}{x} \big( \beta (x) - \alpha _{i}(x) \big) , \quad \delta (x) = \beta '(x) + \frac{1}{x} \big( \alpha _{i}(x) - \beta (x) \big) , $$
are non-decreasing for $ x > 0 $. We have
$$ \gamma (x) = \alpha _{i}'(x) + \int _{0}^{x} \frac{\eta (y)}{y} \, dy - \theta _{i}(x), $$
thus we obtain from (\ref{comp15}) that
$$ \gamma '(x) = \alpha _{i}''(x) + \frac{\eta (x)}{x} - \frac{1}{x} \alpha '_{i}(x) + \frac{1}{x^{2}} \alpha _{i}(x). $$
By the definitions of $ \eta $ and $ \theta $,
$$ \eta (x) \geq - \theta (x) - x \alpha ''_{i}(x) + \alpha '_{i}(x) \geq - \theta _{i}(x) - x \alpha ''_{i}(x) + \alpha '_{i}(x), $$
and it follows that $ \gamma '(x) \geq 0 $. Hence, $ \gamma $ is non-decreasing for $ x > 0 $ indeed. Further, using (\ref{comp16}), we get for $ x > 0 $ that
$$ \delta (x) = \eta (x) + \theta _{i}(x) = \theta _{i}(x) - \theta (x) + \sup \Big\{ -y \alpha ''_{j}(y) + \alpha '_{j}(y) : 0 < y \leq x, \, j = 1, 2 \Big\} . $$
Therefore, $ \delta $ is the sum of two functions which are non-decreasing.
\end{proof}

\section{One more lemma}\label{sec6}

Lemma \ref{extension} allows us to find an extension for a substantially restricted class of functions. In the following lemma, we minorize a more general function by a function which meets the assumptions of Lemma \ref{extension} and has the same value at a given point.

\begin{lemma} \label{envelope}
Let $ \varphi : [0, r] \rightarrow (-\infty , 0] $ be a non-increasing function such that $ \varphi (x)/x \rightarrow 0 $ as $ x \searrow 0 $. Let $ \alpha : \mathbb{R} \rightarrow (-\infty , 0] $ be given by
$$ \alpha (x) = \sup \Big\{ ax^{2} + c : a, c \leq 0, ay^{2} + c \leq \varphi (y) \textrm{ for } 0 \leq y \leq r \Big\} , \quad x \in \mathbb{R}. $$
Then
\begin{itemize}
\item $ \alpha $ is Lipschitz and $ \alpha (x)/x \rightarrow 0 $ as $ x \rightarrow 0 $,
\item $ \alpha '(x)/x $ is non-decreasing on its domain for $ x > 0 $,
\item $ \int _{0}^{\infty } \frac{\alpha (x)}{x^{2}} \, dx \geq 2 \frac{\varphi (r)}{r} + 3 \int _{0}^{r} \frac{\varphi (x)}{x^{2}} \, dx $.
\end{itemize}
\end{lemma}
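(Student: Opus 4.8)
The plan is to pass to the variable $ s = x^{2} $. Since $ \alpha $ depends only on $ x^{2} $, setting $ \psi (s) = \varphi (\sqrt{s}) $ on $ [0, r^{2}] $ we have $ \alpha (x) = A(x^{2}) $ with
$$ A(s) = \sup \big\{ as + c : a, c \leq 0, \ au + c \leq \psi (u) \textrm{ for } 0 \leq u \leq r^{2} \big\} , \quad s \geq 0 , $$
so $ A $ is a supremum of affine functions over a convex parameter set, hence convex (and lower semicontinuous) on $ [0, \infty ) $. First I would record the cheap facts: the pair $ (0, \varphi (r)) $ is admissible, so $ \varphi (r) \leq A(s) \leq 0 $ everywhere (in particular $ \alpha $ is real-valued and $ \leq 0 $); for $ s \geq r^{2} $ every admissible $ (a,c) $ satisfies $ as + c \leq ar^{2} + c \leq \varphi (r) $, so $ A \equiv \varphi (r) $ on $ [r^{2}, \infty ) $, i.e. $ \alpha \equiv \varphi (r) $ off $ [-r, r] $; and $ \varphi (0) = 0 $ because $ \varphi $ is non-increasing with $ \varphi (x)/x \to 0 $. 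The second bullet of the lemma is then immediate from convexity: wherever $ \alpha '(x) $ exists (all but countably many $ x > 0 $, namely those with $ x^{2} $ a differentiability point of $ A $) the chain rule gives $ \alpha '(x) = 2xA'(x^{2}) $, so $ \alpha '(x)/x = 2A'(x^{2}) $, which is non-decreasing in $ x > 0 $ since $ A' $ is non-decreasing and $ x \mapsto x^{2} $ is increasing.

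For the first bullet the real content is $ \alpha (x)/x \to 0 $ as $ x \to 0 $. Given $ \varepsilon > 0 $, choose $ \eta \in (0, r] $ with $ \varphi (y) \geq - \varepsilon y $ on $ [0, \eta ] $, and for small $ x_{0} > 0 $ test the admissible parabola $ a = - \varepsilon /(2 x_{0}) $, $ c = \min _{[0, r]} (\varphi (y) - ay^{2}) $ (note $ c \leq \varphi (0) = 0 $): minimising $ -y + y^{2}/(2x_{0}) $ on $ [0, \eta ] $ and using $ \varphi \geq \varphi (r) $ on $ [\eta , r] $ gives $ c \geq - \varepsilon x_{0}/2 $ once $ x_{0} $ is small enough, whence $ \alpha (x_{0}) \geq a x_{0}^{2} + c \geq - \varepsilon x_{0} $; together with $ \alpha \leq 0 $ this yields the limit, and a similar test at $ x_{0} = 0 $ gives $ \alpha (0) = 0 $, so $ \alpha $ is continuous at $ 0 $. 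Lipschitzness now follows: $ \alpha $ is constant off $ [-r, r] $; the estimate just obtained forces $ |A(s)| \leq \sqrt{s} $ near $ 0 $, so convexity gives $ |A'(s)| \leq C/\sqrt{s} $ there and $ |\alpha '(x)| = 2|x|\,|A'(x^{2})| $ remains bounded, while on $ [\eta ^{2}, r^{2}] $ the finite convex function $ A $ is Lipschitz (hence so is $ \alpha $ on the corresponding interval).

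The third bullet is the heart of the matter. The engine is a single-layer computation: for $ c \in (0, r] $ and $ \varphi _{c} := - \chi _{[c, r]} $, an elementary optimisation shows the associated envelope is $ \alpha _{\varphi _{c}}(x) = - \min (x^{2}/c^{2}, 1) $, so $ \int _{0}^{\infty } \alpha _{\varphi _{c}}(x)/x^{2}\, dx = -2/c $, whereas $ 2\varphi _{c}(r)/r + 3 \int _{0}^{r} \varphi _{c}(x)/x^{2}\, dx = 1/r - 3/c $; since $ c \leq r $, the desired inequality holds for every such $ \varphi _{c} $. For a general $ \varphi $ (one may assume $ \varphi (r) < 0 $ and, after replacing $ \varphi $ by its right-continuous representative, that $ \int _{0}^{r}\varphi (x)/x^{2}\,dx > -\infty $, the right-hand side being $ -\infty $ otherwise) I would use the layer-cake decomposition $ \varphi (x) = - \int _{0}^{v_{0}} \chi _{\{ c(t) \leq x\}}\, dt $ with $ v_{0} = - \varphi (r) $ and $ c(t) = \inf \{ x \in [0, r] : \varphi (x) \leq -t \} $ (non-decreasing in $ t $, and $ > 0 $ for $ t > 0 $). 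The key point is a superadditivity of the envelope operation: fixing $ x_{0} > 0 $, let $ P_{t} $ be the parabola optimal for $ \varphi _{c(t)} $ at $ x_{0} $ (namely $ P_{t}(x) = -x^{2}/c(t)^{2} $ if $ x_{0} \leq c(t) $, and $ P_{t} \equiv -1 $ otherwise); its coefficients are measurable in $ t $ with integrable modulus on $ (0, v_{0}] $, so $ \bar{P} := \int _{0}^{v_{0}} P_{t}\, dt $ is a parabola with non-positive coefficients satisfying $ \bar{P} \leq \int _{0}^{v_{0}} \varphi _{c(t)}\, dt = \varphi $ on $ [0, r] $, hence admissible for $ \alpha $, which gives $ \alpha (x_{0}) \geq \bar{P}(x_{0}) = \int _{0}^{v_{0}} \alpha _{\varphi _{c(t)}}(x_{0})\, dt $. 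Integrating this against $ dx/x^{2} $ over $ (0, \infty ) $, applying Tonelli (all integrands are $ \leq 0 $), then the single-layer inequality layer by layer, and finally the identity $ \int _{0}^{v_{0}} dt/c(t) = v_{0}/r - \int _{0}^{r}\varphi (x)/x^{2}\, dx $ (itself a Tonelli computation from the layer-cake formula), assembles exactly into the claimed bound. The main obstacle is precisely this third bullet, and within it the measure-theoretic bookkeeping: the measurability of $ c(t) $ and of the coefficients of $ P_{t} $, the integrability needed to form $ \bar{P} $ and to justify the Fubini/Tonelli steps, and the reduction to a right-continuous $ \varphi $ so that $ \chi _{\{ c(t) \leq x\}} = \chi _{\{ \varphi (x) \leq -t\}} $ holds pointwise on $ [0, r] $ (which is what makes $ \bar{P} $ genuinely admissible, not just admissible almost everywhere). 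The first two bullets, by contrast, are routine once the $ s = x^{2} $ viewpoint and the one estimate near $ 0 $ are in hand.
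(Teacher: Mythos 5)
Your proof is correct, and for the heart of the matter --- the third bullet --- it takes a genuinely different route from the paper's. The paper decomposes the \emph{domain}: it writes $[0,r]$ as the contact set $\{\alpha = \varphi\}$ together with countably many gap intervals $(p_i,q_i)$ on which $\alpha$ is a single parabola, estimates $-\int_{p_i}^{q_i}\alpha(x)/x^2\,dx$ in terms of $\psi(p_i)$ and $\psi(q_i)$ where $\psi(x)=-\varphi(x)/x$, and controls the resulting sum by the positive variation $V_+\psi$, which in turn is bounded by $\psi(r)-\int_0^r\varphi(x)/x^2\,dx$. You instead decompose $\varphi$ in its \emph{range} via the layer-cake $\varphi=\int_0^{v_0}\varphi_{c(t)}\,dt$ with $\varphi_c=-\chi_{[c,r]}$, verify the target inequality by an explicit computation for each layer ($\alpha_{\varphi_c}(x)=-\min(x^2/c^2,1)$, so $\int_0^\infty\alpha_{\varphi_c}/x^2=-2/c\ge 1/r-3/c$), and pass to general $\varphi$ using a superadditivity of the envelope operation that works precisely because the optimal parabolas have non-positive coefficients and therefore aggregate (by integration in $t$) to a single admissible parabola $\bar P$. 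What each approach buys: the paper's version uses only elementary monotonicity and bounded-variation estimates and needs no Fubini/measurability bookkeeping, while yours isolates a clean structural fact (superadditivity of $\varphi\mapsto\alpha_\varphi$ along non-negative decompositions) and, if one applies the identity $\int_0^{v_0}dt/c(t)=v_0/r-\int_0^r\varphi/x^2\,dx$ directly to $\int_0^{v_0}(-2/c(t))\,dt$ rather than detouring through the single-layer inequality, actually yields the \emph{stronger} bound $\int_0^\infty\alpha/x^2\ge 2\varphi(r)/r+2\int_0^r\varphi/x^2\,dx$. For the first two bullets your change of variable $s=x^2$ and the convexity of $A(s)=\alpha(\sqrt s)$ give the same content as the paper's monotonicity of $x\mapsto a_x$ in a slightly cleaner package; the paper's Lipschitz estimate (via $\alpha(y)-\alpha(x)\ge 3a_x x(y-x)$) is somewhat more self-contained than your reliance on $|A'(s)|\le C/\sqrt s$, but both arguments are sound. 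The only care you need to take is the one you already flag: reduce to the right-continuous (equivalently, for a non-increasing function, lower semi-continuous) representative of $\varphi$, which leaves $\alpha$, $\varphi(r)$ and $\int_0^r\varphi/x^2\,dx$ unchanged, so that the layer-cake identity $\int_0^{v_0}\chi_{\{c(t)\le y\}}\,dt=-\varphi(y)$ holds \emph{pointwise} in $y$ and $\bar P$ is genuinely admissible.
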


The proof of the lemma is provided in several steps. Without loss of generality, we assume that $ \varphi $ is lower semi-continuous. We define
$$ \psi (0) = 0, \quad \psi(x) = -\frac{\varphi (x)}{x}, \quad 0 < x \leq r. $$
For each $ x > 0 $, we choose $ a_{x} \leq 0 $ and $ c_{x} \leq 0 $ so that
$$ a_{x}x^{2} + c_{x} = \alpha (x) \quad \textrm{and} \quad a_{x}y^{2} + c_{x} \leq \varphi (y) \textrm{ for } 0 \leq y \leq r. $$

\begin{claim}
Such $ a_{x} $ and $ c_{x} $ can be chosen.
\end{claim}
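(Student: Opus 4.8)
The plan is to read the defining supremum as the maximization of the linear functional $(a,c)\mapsto ax^{2}+c$ over the set
\[
A=\bigl\{(a,c)\in(-\infty,0]^{2}:ay^{2}+c\le\varphi(y)\text{ for all }0\le y\le r\bigr\},
\]
and to show the maximum is attained by a routine compactness argument. First I would check $A\neq\emptyset$: the pair $(0,\varphi(r))$ lies in $A$, since $\varphi(r)\le0$ and, by monotonicity of $\varphi$, $\varphi(r)\le\varphi(y)$ for every $y\in[0,r]$. Consequently, for the fixed $x>0$ under consideration, $\alpha(x)\ge\varphi(r)>-\infty$, while $\alpha(x)\le0$ because every $(a,c)\in A$ gives $ax^{2}+c\le0$. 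Hence $\alpha(x)$ is a finite real number.

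Next I would pick a maximizing sequence $(a_{n},c_{n})\in A$ with $a_{n}x^{2}+c_{n}\to\alpha(x)$ and show it is eventually bounded. For $n$ large, $a_{n}x^{2}+c_{n}\ge\alpha(x)-1$. Using $a_{n}\le0$ and $c_{n}\le0$ this yields at once $c_{n}\ge a_{n}x^{2}+c_{n}\ge\alpha(x)-1$ and $a_{n}x^{2}=(a_{n}x^{2}+c_{n})-c_{n}\ge\alpha(x)-1$, so that $(\alpha(x)-1)/x^{2}\le a_{n}\le0$ and $\alpha(x)-1\le c_{n}\le0$. Thus $(a_{n},c_{n})$ eventually ranges in a compact rectangle, and one may pass to a subsequence converging to some $(a_{x},c_{x})$ with $a_{x},c_{x}\le0$.

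Finally I would verify admissibility of the limit: for each fixed $y\in[0,r]$, letting $n\to\infty$ in $a_{n}y^{2}+c_{n}\le\varphi(y)$ gives $a_{x}y^{2}+c_{x}\le\varphi(y)$, so $(a_{x},c_{x})\in A$; and by continuity of $(a,c)\mapsto ax^{2}+c$ we get $a_{x}x^{2}+c_{x}=\lim_{n}(a_{n}x^{2}+c_{n})=\alpha(x)$, which is the required pair. The only step that needs a moment's thought — and the closest thing to an obstacle — is the lower bound on $a_{n}$: a priori a maximizing sequence could try to escape to $a=-\infty$, and it is precisely the sign constraint $c_{n}\le0$ together with the crude bound $a_{n}x^{2}+c_{n}\ge\alpha(x)-1$ that prevents this. (That $x>0$ is used here is essential, but harmless, since the claim is invoked only for positive $x$.)
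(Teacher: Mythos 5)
Your proof is correct and follows essentially the same route as the paper's: take a maximizing sequence, use the sign constraints $a_n,c_n\le 0$ together with a lower bound on $a_nx^2+c_n$ (you use the eventual bound $\alpha(x)-1$, the paper uses $\kappa=\min_n(a_x^nx^2+c_x^n)$) to confine the sequence to a compact rectangle, extract a convergent subsequence, and pass to the limit in the constraints. Your write-up is slightly more explicit about non-emptiness of the feasible set and finiteness of $\alpha(x)$, but the underlying argument is the same.
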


\begin{proof}
Let us consider sequences $ a_{x}^{n} \leq 0 $ and $ c_{x}^{n} \leq 0 $ such that
$$ a_{x}^{n}y^{2} + c_{x}^{n} \leq \varphi (y) \textrm{ for } 0 \leq y \leq r $$
and
$$ a_{x}^{n}x^{2} + c_{x}^{n} \rightarrow \alpha (x). $$
It is sufficient to show that the sequences are bounded, as then a subsequence of $ (a_{x}^{n}, c_{x}^{n}) $ converges. Let
$$ \kappa = \min _{n \in \mathbb{N}} \{ a_{x}^{n}x^{2} + c_{x}^{n} \} . $$
We can write
$$ \kappa \leq a_{x}^{n}x^{2} + c_{x}^{n} \leq a_{x}^{n}x^{2}, \quad \kappa \leq a_{x}^{n}x^{2} + c_{x}^{n} \leq c_{x}^{n}, $$
and so $ 0 \geq a_{x}^{n} \geq \kappa /x^{2}, \, 0 \geq c_{x}^{n} \geq \kappa $.
\end{proof}

\begin{claim} \label{envcl1}
The function $ x \mapsto a_{x} $ is non-decreasing.
\end{claim}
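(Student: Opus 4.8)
The plan is to exploit the fact that, after the change of variables $u = x^{2}$, everything in the definition of $\alpha$ becomes affine. Write $P_{i}(y) = a_{x_{i}}y^{2} + c_{x_{i}}$ for $i = 1, 2$; by the choice of $a_{x_{i}}$ and $c_{x_{i}}$, each $P_{i}$ is one of the competitors in the supremum defining $\alpha$ (it satisfies $a_{x_{i}}, c_{x_{i}} \le 0$ and $a_{x_{i}}y^{2} + c_{x_{i}} \le \varphi(y)$ for $0 \le y \le r$), and moreover $P_{i}(x_{i}) = \alpha(x_{i})$. I would fix $0 < x_{1} < x_{2}$ and prove $a_{x_{1}} \le a_{x_{2}}$.

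First I would record the two inequalities that come straight from the definition of $\alpha$. Since $P_{2}$ is an admissible competitor and $P_{1}(x_{1}) = \alpha(x_{1})$, we get $P_{1}(x_{1}) = \alpha(x_{1}) \ge P_{2}(x_{1})$; symmetrically, since $P_{1}$ is admissible and $P_{2}(x_{2}) = \alpha(x_{2})$, we get $P_{2}(x_{2}) = \alpha(x_{2}) \ge P_{1}(x_{2})$. Hence the difference $P_{1} - P_{2}$ is nonnegative at $x_{1}$ and nonpositive at $x_{2}$.

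The key observation is that $(P_{1} - P_{2})(y) = (a_{x_{1}} - a_{x_{2}})y^{2} + (c_{x_{1}} - c_{x_{2}})$ is an affine function of $u = y^{2}$ with leading coefficient $a_{x_{1}} - a_{x_{2}}$. If we had $a_{x_{1}} > a_{x_{2}}$, then $u \mapsto (a_{x_{1}} - a_{x_{2}})u + (c_{x_{1}} - c_{x_{2}})$ would be strictly increasing, so being $\ge 0$ at $u = x_{1}^{2}$ it would be $> 0$ at the larger point $u = x_{2}^{2}$, contradicting $(P_{1} - P_{2})(x_{2}) \le 0$. Therefore $a_{x_{1}} \le a_{x_{2}}$, which is exactly the assertion.

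I do not expect a genuine obstacle here; the only point worth isolating is the substitution $u = x^{2}$ that turns the admissible parabolas into affine functions (equivalently, one may note that $x \mapsto \alpha(x)$ is, as a function of $x^{2}$, the largest convex minorant of $u \mapsto \varphi(\sqrt{u})$ on $[0, r^{2}]$, so that $a_{x}$ is the slope of a supporting line and its monotonicity is automatic). Note that this argument uses neither the monotonicity of $\varphi$ nor the condition $\varphi(x)/x \to 0$, only the admissibility of $P_{1}$, $P_{2}$ together with the defining property $P_{i}(x_{i}) = \alpha(x_{i})$.
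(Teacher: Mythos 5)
Your proof is correct and is essentially the paper's own argument: the two inequalities $P_1(x_1)\ge P_2(x_1)$ and $P_2(x_2)\ge P_1(x_2)$ are exactly what the paper writes down, and your ``slope of the affine function in $u=y^2$ must be $\le 0$'' step is the same algebra as the paper's step of summing the two inequalities to get $a_{x_2}(x_2^2-x_1^2)\ge a_{x_1}(x_2^2-x_1^2)$. The parenthetical reinterpretation via the convex minorant of $u\mapsto\varphi(\sqrt u)$ is a nice way to see it, but it is not a different proof.
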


\begin{proof}
Let $ 0 < x < y $. We have
$$ a_{x}x^{2} + c_{x} = \alpha (x) \geq a_{y}x^{2} + c_{y}, $$
$$ a_{y}y^{2} + c_{y} = \alpha (y) \geq a_{x}y^{2} + c_{x}. $$
Summing up these inequalities,
$$ a_{x}x^{2} + a_{y}y^{2} + c_{x} + c_{y} \geq a_{y}x^{2} + a_{x}y^{2} + c_{y} + c_{x}, $$
i.e.,
$$ a_{y}(y^{2}-x^{2}) \geq a_{x}(y^{2}-x^{2}). $$
Consequently, $ a_{y} \geq a_{x} $. 
\end{proof}

\begin{claim} \label{envcl2}
The function $ \alpha '(x)/x $ is non-decreasing on its domain for $ x > 0 $.
\end{claim}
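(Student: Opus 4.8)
The plan is to exploit the fact that $\alpha$ is, by its very definition, a supremum of functions of the form $x \mapsto ax^{2} + c$, each of which is affine in the variable $s = x^{2}$. Thus the function $s \mapsto \alpha(\sqrt{s})$ is convex on $[0, \infty)$, and it is this hidden convexity that forces $\alpha'(x)/x$ to be monotone. Rather than passing to this substitution explicitly, I would argue directly through the supporting parabolas $y \mapsto a_{x}y^{2} + c_{x}$ fixed above.

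First I would show that at every $x > 0$ at which $\alpha'(x)$ exists one has $\alpha'(x) = 2 a_{x} x$. Indeed, from $\alpha(y) \geq a_{x}y^{2} + c_{x}$ for all $y \in [0,r]$ together with the equality $\alpha(x) = a_{x}x^{2} + c_{x}$ one gets $\alpha(y) - \alpha(x) \geq a_{x}(y-x)(y+x)$, hence
$$ \frac{\alpha(y) - \alpha(x)}{y - x} \geq a_{x}(y + x) \quad (y > x), \qquad \frac{\alpha(y) - \alpha(x)}{y - x} \leq a_{x}(y + x) \quad (y < x). $$
Letting $y \to x+$ and $y \to x-$ and using that the one-sided derivatives of $\alpha$ at $x$ coincide (they equal $\alpha'(x)$) yields simultaneously $\alpha'(x) \geq 2a_{x}x$ and $\alpha'(x) \leq 2a_{x}x$, so $\alpha'(x) = 2 a_{x} x$.

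Then $\alpha'(x)/x = 2 a_{x}$ on the domain of $\alpha'$ intersected with $(0, \infty)$, and this function is non-decreasing because $x \mapsto a_{x}$ is non-decreasing by Claim~\ref{envcl1}. This finishes the argument.

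I do not expect any real obstacle here. The only point requiring mild care is that the identity $\alpha'(x) = 2 a_{x} x$ must be extracted solely from the one-sided difference quotients; a posteriori this also shows that $a_{x}$ is uniquely determined at every differentiability point, so the (possibly non-unique) earlier choice of $(a_{x}, c_{x})$ causes no trouble.
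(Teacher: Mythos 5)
Your argument is correct and is essentially the paper's own proof, just spelled out in more detail: the paper states tersely that since $y \mapsto a_{x}y^{2}+c_{x}$ is a minorant of $\alpha$ touching it at $x$, the derivatives must coincide there, and you unpack exactly this via the two one-sided difference quotients before invoking Claim~\ref{envcl1}. One small wording slip worth fixing: you assert the minorant inequality $\alpha(y)\geq a_{x}y^{2}+c_{x}$ only for $y\in[0,r]$, but it actually holds for all $y\in\mathbb{R}$ by the very definition of $\alpha$ as a supremum (the pair $(a_{x},c_{x})$ is admissible, so $a_{x}z^{2}+c_{x}\leq\alpha(z)$ for every $z$); this global version is what you really need so that the difference quotients make sense when $x$ lies near or past $r$.
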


\begin{proof}
By Claim \ref{envcl1}, it is sufficient to realize that
$$ \frac{\alpha '(x)}{x} = 2a_{x} \quad \textrm{if $ \alpha '(x) $ exists.} $$
Since the function $ y \mapsto a_{x}y^{2} + c_{x} $ is a minorant of $ \alpha $ and has the same value at $ x $, it has also the same derivative. That is, $ \alpha '(x) = 2a_{x}x $.
\end{proof}

\begin{claim} \label{envcl3}
$ \alpha (x)/x \rightarrow 0 $ as $ x \rightarrow 0 $.
\end{claim}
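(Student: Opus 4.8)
The plan is to establish $\limsup_{x\to 0^{+}}\alpha(x)/x\le 0$ for free and then, for each $\varepsilon>0$, to exhibit explicit competitors in the supremum defining $\alpha$ that force $\alpha(x)\ge-\varepsilon x$ for all sufficiently small $x>0$. First I would record three trivial observations: $\alpha$ is even (the admissible set in the definition of $\alpha(x)$ is independent of $x$, and $ax^{2}+c$ depends on $x$ only through $x^{2}$); $\alpha\le 0$ everywhere (since $ax^{2}+c\le 0$ whenever $a,c\le 0$); and $\varphi(0)=0$ (we have $\varphi(0)\le 0$ by hypothesis, whereas $\varphi(0)\ge\varphi(y)$ for $y\in(0,r]$ since $\varphi$ is non-increasing and $\varphi(y)\to 0$ as $y\searrow 0$ because $\varphi(y)/y\to 0$, so $\varphi(0)\ge 0$). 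Because $\alpha(x)/x\le 0$ for $x>0$, it then remains only to prove $\liminf_{x\to 0^{+}}\alpha(x)/x\ge 0$.

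Fix $\varepsilon>0$. Using $\varphi(y)/y\to 0$ together with $\varphi(0)=0$, I would pick $\delta\in(0,r]$ with $\varphi(y)\ge-\varepsilon y$ on $[0,\delta]$, and put $S:=\max\bigl\{\varepsilon/(2\delta),\,-\varphi(r)/\delta^{2}\bigr\}$. For every $s\ge S$ consider the quadratic $q_{s}(y)=-sy^{2}+c_{s}$, where $c_{s}:=\inf_{0\le y\le r}\bigl(\varphi(y)+sy^{2}\bigr)$. The pair $(-s,c_{s})$ is admissible in the definition of $\alpha$: indeed $-s\le 0$; one has $c_{s}\le\varphi(0)+0=0$; and $q_{s}(y)=-sy^{2}+c_{s}\le\varphi(y)$ for all $y\in[0,r]$ by the very choice of $c_{s}$. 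The key quantitative point is the lower bound $c_{s}\ge-\varepsilon^{2}/(4s)$: for $y\in[0,\delta]$ one has $\varphi(y)+sy^{2}\ge sy^{2}-\varepsilon y\ge-\varepsilon^{2}/(4s)$ (the quadratic $y\mapsto sy^{2}-\varepsilon y$ attains its minimum $-\varepsilon^{2}/(4s)$ at $y=\varepsilon/(2s)$, which lies in $[0,\delta]$ precisely because $s\ge\varepsilon/(2\delta)$), while for $y\in[\delta,r]$ one has $\varphi(y)+sy^{2}\ge\varphi(r)+s\delta^{2}\ge 0$ because $\varphi(y)\ge\varphi(r)$, $y\ge\delta$ and $s\ge-\varphi(r)/\delta^{2}$. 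Consequently $\alpha(x)\ge q_{s}(x)=-sx^{2}+c_{s}\ge-sx^{2}-\varepsilon^{2}/(4s)$ for every $x\in\mathbb{R}$ and every $s\ge S$.

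To finish, given $0<x\le\varepsilon/(2S)$ I would choose $s=\varepsilon/(2x)$, which is $\ge S$ and makes $sx^{2}=\varepsilon^{2}/(4s)=\varepsilon x/2$; hence $\alpha(x)\ge-sx^{2}-\varepsilon^{2}/(4s)=-\varepsilon x$, i.e.\ $\alpha(x)/x\ge-\varepsilon$. Letting $x\to 0^{+}$ gives $\liminf_{x\to 0^{+}}\alpha(x)/x\ge-\varepsilon$, and since $\varepsilon>0$ was arbitrary, $\liminf_{x\to 0^{+}}\alpha(x)/x\ge 0$; combined with $\limsup_{x\to 0^{+}}\alpha(x)/x\le 0$ this yields $\alpha(x)/x\to 0$ as $x\to 0^{+}$, and evenness of $\alpha$ gives the two-sided limit.

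I expect the main obstacle to be purely conceptual: seeing that a single fixed quadratic cannot witness $\alpha(x)\ge-\varepsilon x$ near $0$, since such a competitor $ay^{2}+c$ would need $c\ge 0$, hence $c=0$, and then $ay^{2}\le\varphi(y)$ on $[0,r]$ typically fails (for instance when $\varphi(y)=-y^{3/2}$). The remedy is to let the curvature $s=-a$ of the competitor blow up as $x\searrow 0$, coupled to $x$ through $s\sim\varepsilon/x$; once this is understood, the only computations left are the elementary minimization of $sy^{2}-\varepsilon y$ and the balancing $sx^{2}+\varepsilon^{2}/(4s)\ge\varepsilon x$ with equality at $s=\varepsilon/(2x)$, both routine.
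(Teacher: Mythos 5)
Your argument is correct and is essentially the same as the paper's: both proofs take $\varepsilon>0$, exploit $\varphi(y)/y\to 0$ to get $\varphi\ge -\varepsilon y$ near $0$, and then exhibit the competitor quadratic with slope $a=-\varepsilon/(2x)$ (your $s=\varepsilon/(2x)$), verifying it stays below $\varphi$ by splitting $[0,r]$ into the region near $0$ (via the AM--GM / minimum-of-$sy^2-\varepsilon y$ computation) and the region away from $0$ (where $\varphi\ge\varphi(r)$ and the curvature term dominates). The only cosmetic difference is that the paper writes down the specific constant $c=-\varepsilon x/2$ directly, while you define $c_s$ as the optimal constant and bound it from below by the same value $-\varepsilon^2/(4s)=-\varepsilon x/2$.
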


\begin{proof}
Let $ \varepsilon > 0 $. We want to find a $ \delta > 0 $ such that
$$ \alpha (x) \geq -\varepsilon x, \quad 0 < x \leq \delta . $$
Since $ \varphi (x)/x \rightarrow 0 $ as $ x \searrow 0 $, there is a $ \delta _{0} > 0 $ such that
$$ \varphi (x) \geq -\varepsilon x, \quad 0 \leq x \leq \delta _{0}. $$
We check that the choice
$$ \delta = - \frac{\varepsilon \delta _{0}^{2}}{2\varphi (r)} $$
works (we omit the trivial case $ \varphi (r) = 0 $). Let $ x \in (0, \delta ] $ be given. We put
$$ a = - \frac{\varepsilon }{2x}, \quad c = - \frac{\varepsilon }{2}x. $$
For $ 0 \leq y \leq \delta _{0} $, using the AG-inequality,
$$ ay^{2} + c = - \frac{\varepsilon }{2} \Big( \frac{y^{2}}{x} + x \Big) \leq -\varepsilon y \leq \varphi (y). $$
For $ \delta _{0} < y \leq r $,
$$ ay^{2} + c \leq ay^{2} = - \frac{\varepsilon }{2x} \cdot y^{2} \leq - \frac{\varepsilon }{2\delta } \cdot \delta _{0}^{2} = \varphi (r) \leq \varphi (y). $$
Hence $ ay^{2} + c \leq \varphi (y) $ for $ 0 \leq y \leq r $, and so
$$ \alpha (x) \geq ax^{2} + c = -\varepsilon x. $$
\end{proof}

\begin{claim} \label{envcl4}
We have $ \alpha (0) = 0 $ and $ \alpha (x) = \varphi (r) $ for $ x \geq r $.
\end{claim}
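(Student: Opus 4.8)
The plan is to prove each of the two assertions as a pair of matching inequalities for the supremum defining $\alpha$. As a preliminary observation I would first record that the hypotheses force $\varphi(0)=0$: since $\varphi(x)/x\to 0$ as $x\searrow 0$ one gets $\varphi(x)\to 0$, and since $\varphi$ is non-increasing this yields $\varphi(0)\ge\lim_{x\to 0+}\varphi(x)=0$, while $\varphi(0)\le 0$ by assumption. This fact is exactly what makes the bound $\alpha(0)\le 0$ sharp.

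For $\alpha(0)=0$: evaluating the admissibility constraint $ay^{2}+c\le\varphi(y)$ at $y=0$ gives $c\le\varphi(0)=0$ for every admissible pair $(a,c)$, hence $\alpha(0)=\sup\{c\}\le 0$. For the reverse inequality I would recycle the explicit admissible pairs already constructed in the proof of Claim~\ref{envcl3}: for each $\varepsilon>0$ and each sufficiently small $x_{0}>0$, the pair $(a,c)=(-\varepsilon/(2x_{0}),\,-\varepsilon x_{0}/2)$ satisfies $ay^{2}+c\le\varphi(y)$ throughout $[0,r]$, so $\alpha(0)\ge c=-\varepsilon x_{0}/2$; letting $x_{0}\searrow 0$ gives $\alpha(0)\ge 0$.

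For $\alpha(x)=\varphi(r)$ when $x\ge r$: the single pair $(a,c)=(0,\varphi(r))$ is admissible, since $\varphi(r)\le 0$ and $\varphi$ non-increasing gives $\varphi(r)\le\varphi(y)$ for $0\le y\le r$; this pair contributes the value $\varphi(r)$ at every point, so $\alpha(x)\ge\varphi(r)$. Conversely, for any admissible $(a,c)$ the constraint at $y=r$ reads $ar^{2}+c\le\varphi(r)$, and since $a\le 0$ and $x^{2}\ge r^{2}$ we get $ax^{2}+c\le ar^{2}+c\le\varphi(r)$; taking the supremum yields $\alpha(x)\le\varphi(r)$.

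All of this is elementary and I do not anticipate a real obstacle. The only points deserving a moment's care are the preliminary remark that $\varphi(0)=0$ and the bookkeeping of importing the admissible pair from the proof of Claim~\ref{envcl3} (rather than re-deriving it) to get $\alpha(0)\ge 0$.
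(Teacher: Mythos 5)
Your proof is correct and follows essentially the same route as the paper's: for $\alpha(0)=0$, the paper invokes the statement of Claim~\ref{envcl3} together with the monotonicity of $\alpha$ on $[0,\infty)$, whereas you re-extract the explicit admissible pairs from that claim's proof, which amounts to the same thing; for $\alpha(x)=\varphi(r)$ when $x\ge r$, both proofs use the pair $(a,c)=(0,\varphi(r))$ for the lower bound and $a\le 0$ plus the constraint at $y=r$ (equivalently, $\alpha(x)\le\alpha(r)\le\varphi(r)$) for the upper bound. Minor remark: the preliminary observation $\varphi(0)=0$ is harmless but not actually needed for $\alpha(0)\le 0$, since the defining supremum already ranges over $c\le 0$.
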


\begin{proof}
We obtain $ \alpha (0) = 0 $ from Claim \ref{envcl3} and from the fact that $ \alpha $ is non-increasing for $ x \geq 0 $. If we consider $ a = 0 $ and $ c = \varphi (r) $, then $ \varphi (y) \geq \varphi (r) = ay^{2} + c $ for $ 0 \leq y \leq r $. Therefore, $ \alpha (x) \geq ax^{2} + c = \varphi (r) $ for every $ x \in \mathbb{R} $. At the same time, if $ x \geq r $, then $ \alpha (x) \leq \alpha (r) \leq \varphi (r) $.
\end{proof}

\begin{claim} \label{envcl5}
$ \alpha $ is Lipschitz.
\end{claim}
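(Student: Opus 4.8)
The plan is to exploit the representation $ \alpha (x) = a_{x} x^{2} + c_{x} $ with $ a_{x}, c_{x} \leq 0 $ provided by the choice of $ a_{x}, c_{x} $, together with the monotonicity of $ x \mapsto a_{x} $ (Claim \ref{envcl1}) and the estimate $ \alpha (x)/x \to 0 $ (Claim \ref{envcl3}). Since $ \alpha $ is even and, by Claim \ref{envcl4}, constant on $ [r, \infty ) $, and since $ \alpha $ is plainly non-increasing on $ [0, \infty ) $ (it is a supremum of functions $ z \mapsto a z^{2} + c $ with $ a \leq 0 $), it suffices to produce a constant $ C $ with $ |\alpha (y) - \alpha (x)| \leq C(y - x) $ for $ 0 \leq x \leq y \leq r $; the cases where an argument exceeds $ r $ then follow immediately from monotonicity.

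First I would record that $ M := \sup _{0 < x \leq r} |a_{x} x| < \infty $. Writing $ -\alpha (x)/x = (-a_{x} x) + (-c_{x}/x) $, both summands are non-negative, so Claim \ref{envcl3} forces $ a_{x} x \to 0 $ as $ x \searrow 0 $; hence $ |a_{x} x| \leq 1 $ on some $ (0, \delta ] $, while on $ [\delta , r] $ Claim \ref{envcl1} gives $ a_{\delta } \leq a_{x} \leq 0 $, so $ |a_{x} x| \leq |a_{\delta }| r $.

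Next, for $ 0 < x \leq y \leq r $, since $ z \mapsto a_{x} z^{2} + c_{x} $ is a minorant of $ \alpha $ agreeing with $ \alpha $ at $ x $, and since $ \alpha $ is non-increasing,
$$ a_{x}(y^{2} - x^{2}) \leq \alpha (y) - \alpha (x) \leq 0, $$
whence $ |\alpha (y) - \alpha (x)| \leq |a_{x}| (x + y)(y - x) $. This estimate is only useful when $ x $ and $ y $ are comparable: if $ x \leq y \leq 2x $, then $ x + y \leq 3x $, so $ |\alpha (y) - \alpha (x)| \leq 3|a_{x} x| (y - x) \leq 3M(y - x) $. To handle arbitrary $ 0 < x \leq y \leq r $ I would insert the points $ x_{i} = \min \{ 2^{i} x, y \} $, which form a finite chain $ x = x_{0} < x_{1} < \dots < x_{k} = y $ with $ x_{i+1} \leq 2 x_{i} $; summing the comparable-scale estimate over consecutive pairs and telescoping yields $ |\alpha (y) - \alpha (x)| \leq 3M(y - x) $. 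Finally, letting $ x \searrow 0 $ and using the continuity of $ \alpha $ at $ 0 $ (Claims \ref{envcl3} and \ref{envcl4}) extends the bound to all $ 0 \leq x \leq y \leq r $, so $ \alpha $ is $ 3M $-Lipschitz on $ \mathbb{R} $.

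The point needing care — and what I expect to be the main obstacle — is precisely that the direct minorant estimate $ |\alpha (y) - \alpha (x)| \leq |a_{x}| (x + y)(y - x) $ deteriorates badly when $ y \gg x $, because $ |a_{x}| $ may blow up as $ x \searrow 0 $ (only $ |a_{x} x| $ stays bounded, by the argument above). So one must not apply it on all of $ [0, r] $ at once, but only ``at scale'', and recover the global Lipschitz bound through the dyadic chaining; this is what converts pointwise control of $ |a_{x} x| $ into a genuine Lipschitz estimate. (Alternatively, one could note that $ \alpha (x) = \tilde{\alpha }(x^{2}) $ for a finite convex non-increasing $ \tilde{\alpha } $, hence $ \alpha $ is locally Lipschitz on $ (0, \infty ) $ and one may integrate the a.e.\ derivative $ \alpha '(x) = 2 a_{x} x $; the chaining argument has the advantage of not invoking this regularity a priori.)
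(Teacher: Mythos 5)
Your proposal is correct and follows essentially the same line as the paper's proof: both establish $\sup_{x}|a_x x| =: M < \infty$ and then use the minorant $z \mapsto a_x z^2 + c_x$ to obtain $|\alpha(y)-\alpha(x)| \leq |a_x|(x+y)(y-x) \leq 3M(y-x)$ at comparable scales $x \leq y \leq 2x$, whence the $3M$-Lipschitz bound. The paper states the reduction to comparable scales as \uv{it is sufficient to show} and derives $a_x x \geq -M$ in one line from the observation $\alpha(x) \geq -Mx$ (which follows from Claims~\ref{envcl3} and~\ref{envcl4}); you spell out the implicit dyadic chaining and obtain the same bound on $|a_x x|$ by a slightly different route (splitting $-\alpha(x)/x$ into two nonnegative terms near $0$, then invoking Claim~\ref{envcl1} on $[\delta,r]$), but these are cosmetic differences rather than a different argument.
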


\begin{proof}
By Claims \ref{envcl3} and \ref{envcl4}, there is an $ M > 0 $ such that
$$ \alpha (x) \geq -Mx, \quad x \geq 0. $$
We show that $ \alpha $ is Lipschitz with the constant $ 3M $. It is sufficient to show that
$$ \alpha (y) - \alpha (x) \geq -3M(y - x) \quad \textrm{when } 0 < x \leq y \textrm{ and } y \leq 2x. $$
We have
$$ a_{x}x \geq -M, $$
as $ -Mx \leq \alpha (x) = a_{x}x^{2} + c_{x} \leq a_{x}x^{2} $. Hence,
$$ \alpha (y) - \alpha (x) \geq a_{x}y^{2} + c_{x} - a_{x}x^{2} - c_{x} = a_{x}(y + x)(y - x) \geq 3a_{x}x(y - x) \geq -3M(y - x). $$
\end{proof}

\begin{claim} \label{envcl6}
For every $ 0 \leq p < q \leq r $, we have
$$ \int _{p}^{q} \frac{\varphi (x)}{x^{2}} \, dx \leq \frac{q-p}{q} \cdot \big( -\psi (p) \big) . $$
\end{claim}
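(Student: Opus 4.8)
The plan is to reduce everything to the elementary monotonicity of $\varphi$ together with an explicit integration of $1/x^{2}$. First I would dispose of the degenerate case $p = 0$: here $-\psi(0) = 0$, while the integrand $\varphi(x)/x^{2}$ is non-positive on $(0,q]$ because $\varphi \leq 0$; hence $\int_{0}^{q} \varphi(x)/x^{2}\,dx \leq 0$, which is exactly the claimed inequality (the integral being a well-defined element of $[-\infty,0]$ is all we need).

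For the main case $0 < p < q \leq r$, recall that $-\psi(p) = \varphi(p)/p$. Since $\varphi$ is non-increasing, $\varphi(x) \leq \varphi(p)$ for every $x \in [p,q]$, and both sides are non-positive. Dividing by $x^{2} > 0$ preserves the inequality, so $\varphi(x)/x^{2} \leq \varphi(p)/x^{2}$ on $[p,q]$. Integrating over $[p,q]$ (the integral on the left is finite, as $\varphi$ is bounded on $[p,q]$ and $1/x^{2}$ is bounded there) yields
\[
\int_{p}^{q} \frac{\varphi(x)}{x^{2}}\,dx \;\leq\; \varphi(p)\int_{p}^{q} \frac{dx}{x^{2}} \;=\; \varphi(p)\Big(\frac{1}{p} - \frac{1}{q}\Big) \;=\; \frac{q-p}{q}\cdot\frac{\varphi(p)}{p} \;=\; \frac{q-p}{q}\,\big(-\psi(p)\big),
\]
which is the desired estimate.

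I do not anticipate a genuine obstacle here; the only points requiring a word of care are that the left-hand integral should be read in $[-\infty,\infty)$ when $p = 0$ (harmless, since we only want an upper bound by $0$), and that the inequality $\varphi(x) \leq \varphi(p)$ must be combined with the positivity of $x^{2}$ rather than with the sign of $\varphi(p)$ alone. Everything else is the routine identity $\varphi(p)(1/p - 1/q) = \frac{q-p}{q}\cdot\frac{\varphi(p)}{p}$.
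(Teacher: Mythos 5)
Your proof is correct and follows essentially the same route as the paper: handle $p=0$ by the sign of $\varphi$ and $\psi(0)=0$, and for $p>0$ use the monotonicity bound $\varphi(x)\leq\varphi(p)$ together with the explicit evaluation of $\int_p^q x^{-2}\,dx$. No gaps.
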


\begin{proof}
The formula is valid when $ p = 0 $, as $ \varphi \leq 0 $ and $ \psi (0) = 0 $. When $ p > 0 $,
$$ \int _{p}^{q} \frac{\varphi (x)}{x^{2}} \, dx \leq \int _{p}^{q} \frac{\varphi (p)}{x^{2}} \, dx = \varphi (p) \Big( \frac{1}{p} - \frac{1}{q} \Big) = \frac{q - p}{q} \cdot \frac{\varphi (p)}{p} = \frac{q - p}{q} \cdot \big( -\psi (p) \big) . $$
\end{proof}

\begin{claim} \label{envcl7}
Let numbers $ 0 \leq p < q \leq r $ have the property that $ \alpha (p) = \varphi (p), \alpha (q) = \varphi (q) $ and $ \alpha (x) < \varphi (x) $ for $ p < x < q $. Then
$$ - \int _{p}^{q} \frac{\alpha (x)}{x^{2}} \, dx \leq \frac{q-p}{q} \cdot \big( \psi (q) + \psi (p) \big) . $$
\end{claim}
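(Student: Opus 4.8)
\emph{Proof proposal.}

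The plan is to show that $ \alpha $ restricted to $ [p,q] $ is a single quadratic, and then to read off the inequality by a direct integration. It is convenient to pass to the variable $ u = x^{2} $: put $ \tilde\varphi (u) = \varphi (\sqrt u) $ and $ \tilde\alpha (u) = \alpha (\sqrt u) $ on $ [0,r^{2}] $. In these coordinates the defining supremum for $ \alpha $ says exactly that $ \tilde\alpha $ is the supremum of those affine minorants of $ \tilde\varphi $ on $ [0,r^{2}] $ having non-positive slope and non-positive value at $ 0 $. First I would identify this with the convex hull (largest convex minorant) $ \widehat{\tilde\varphi } $ of $ \tilde\varphi $: one inequality is immediate, and for the other I would use that $ \tilde\varphi (0) = \varphi (0) = 0 $ — which holds because $ \alpha (0) = 0 $ by Claim~\ref{envcl4} and $ \alpha \le \varphi $ on $ [0,r] $ — so that every affine minorant of non-positive slope is automatically admissible, while an affine minorant of positive slope is dominated on $ [0,r^{2}] $ by the admissible constant $ \tilde\varphi (r^{2}) $ (here $ \tilde\varphi $ non-increasing is used).

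Next I would observe that, since $ \tilde\alpha = \tilde\varphi $ at $ p^{2} $ and $ q^{2} $ while $ \tilde\alpha < \tilde\varphi $ on $ (p^{2},q^{2}) $, the interval $ (p^{2},q^{2}) $ is a maximal interval on which $ \widehat{\tilde\varphi } < \tilde\varphi $; hence, by the standard developability of the convex hull of a lower semicontinuous function ($ \varphi $, and so $ \tilde\varphi $, may be taken lower semicontinuous), $ \widehat{\tilde\varphi } = \tilde\alpha $ is affine on $ [p^{2},q^{2}] $. (If one prefers to avoid quoting this, it can be argued directly: a corner of $ \tilde\alpha $ at an interior point $ \xi $ of this gap could be flattened on a small neighbourhood of $ \xi $ without meeting $ \tilde\varphi $, since lower semicontinuity keeps $ \tilde\varphi $ bounded away from $ \tilde\alpha (\xi ) $ near $ \xi $, producing a strictly larger convex minorant and contradicting maximality; equivalently, one checks from the optimality of the pairs $ (a_{x},c_{x}) $ and Claim~\ref{envcl1} that $ a_{x} $ is constant on $ (p,q) $.) Therefore $ \alpha (x) = \bar a x^{2} + \bar c $ on $ [p,q] $, with $ \bar a = \frac{\varphi (q)-\varphi (p)}{q^{2}-p^{2}} $ and $ \bar c = \varphi (p) - \bar a p^{2} $ (and $ \bar c = \varphi (0) = 0 $ when $ p = 0 $). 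Two sign facts then follow: $ \bar a \le 0 $ because $ \varphi $ is non-increasing, and $ \bar c \le 0 $ because $ u \mapsto \bar a u + \bar c $ is a supporting line of the convex function $ \tilde\alpha $, so $ \bar c = (\bar a u + \bar c)|_{u=0} \le \tilde\alpha (0) = \alpha (0) = 0 $.

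What remains is a short computation. Writing $ K = \bar a + \frac{\bar c}{pq} $ (so $ K \le 0 $) and using $ \psi (p) = -\varphi (p)/p = -\alpha (p)/p $ and $ \psi (q) = -\alpha (q)/q $, one finds for $ p > 0 $ that
$$ -\int _{p}^{q} \frac{\alpha (x)}{x^{2}} \, dx = -(q-p)K \qquad \text{and} \qquad \frac{q-p}{q} \big( \psi (q) + \psi (p) \big) = -\frac{(q-p)(p+q)}{q} \, K, $$
and since $ K \le 0 $ and $ \frac{p+q}{q} \ge 1 $ the first quantity is at most the second. The case $ p = 0 $ is even simpler: then $ \bar c = 0 $ and $ \psi (0) = 0 $, and both sides equal $ -\varphi (q)/q $. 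I expect the only genuinely delicate point to be the structural step — establishing that $ \tilde\alpha $ is affine on the gap $ (p^{2},q^{2}) $, equivalently that $ \alpha $ is a quadratic on $ [p,q] $; once this is secured, everything else is routine bookkeeping.
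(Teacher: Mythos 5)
Your proposal is essentially correct and reaches the same conclusion as the paper, but via a noticeably different mechanism. Both proofs hinge on the quadratic $\bar a x^{2}+\bar c$ through $(p,\varphi(p))$ and $(q,\varphi(q))$, and then an integration in which only the sign of $K=\bar a+\bar c/(pq)$ and the factor $(p+q)/q\ge 1$ matter; your arithmetic there is right, and the endpoint case $p=0$ matches. Where you diverge is in establishing the structural fact that $\alpha$ dominates (for you, equals) that quadratic on $[p,q]$. The paper shows directly that $\bar a y^{2}+\bar c\le\varphi(y)$ on $[0,r]$ by assuming a violation point $x$ exists and ruling out all four positions of $x$ relative to $p$ and $q$ using the optimality of $(a_{p},c_{p})$, $(a_{q},c_{q})$ — an elementary but hands-on casework argument. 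You instead pass to $u=x^{2}$, correctly identify $\tilde\alpha$ with the convex envelope $\widehat{\tilde\varphi}$ of $\tilde\varphi$ on $[0,r^{2}]$ (your reduction of the sign constraints on $a$ and $c$ using $\tilde\varphi(0)=0$ and monotonicity is clean and is a genuinely nice observation not made explicit in the paper), and then invoke the fact that a convex envelope is affine on a maximal gap where it lies strictly below the function.

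That last step is exactly where the paper's casework does its work, and it is the one place your proposal is not self-contained. Quoting "developability of the convex hull of an l.s.c.\ function" is fine if you are willing to cite it, but your sketched in-house justification — flatten a corner at an interior point of the gap — only handles the case where $\widehat{\tilde\varphi}$ has a kink ($\hat h'_{-}<\hat h'_{+}$). It does not by itself rule out $\widehat{\tilde\varphi}$ being $C^{1}$ and strictly convex on a subinterval of the gap, which also needs to be excluded (one then has to push up a tangent line rather than flatten a corner, using lower semicontinuity of $\tilde\varphi$ in the same way). Your second alternative, that $a_{x}$ is constant on $(p,q)$ "by optimality and Claim~\ref{envcl1}", is asserted but not argued — Claim~\ref{envcl1} gives monotonicity of $a_{x}$, not constancy. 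Finally, a small efficiency remark: you prove $\alpha=\bar a x^{2}+\bar c$ on $[p,q]$, but only the inequality $\alpha\ge\bar a x^{2}+\bar c$ is needed for the claim, and that weaker statement is precisely what the paper's casework establishes (the equality then follows for free from convexity of $\tilde\alpha$ and the two endpoint touches, but it is not required).
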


\begin{proof}
Let $ a $ and $ c $ be the numbers such that
$$ ap^{2} + c = \alpha (p) \quad \textrm{and} \quad aq^{2} + c = \alpha (q), $$
i.e.,
$$ a = \frac{\alpha (q) - \alpha (p)}{q^{2} - p^{2}}, \quad c = \frac{q^{2} \alpha (p) - p^{2} \alpha (q)}{q^{2} - p^{2}}. $$
Let us prove that
$$ \alpha (x) \geq ax^{2} + c, \quad x \in \mathbb{R}. $$
We need to check that $ c = \bar c $ where $ \bar c $ is the greatest number such that
$$ \varphi (y) \geq ay^{2} + \bar c, \quad 0 \leq y \leq r. $$
Suppose the opposite, i.e., $ \bar c < c $. There is a point $ x $ such that $ \varphi (x) = ax^{2} + \bar c $ (due to our assumption that $ \varphi $ is lower semi-continuous). There are four possibilities, and we verify that none of them is possible.

(a) If $ p = 0 $ and $ x = 0 $, then $ 0 = \varphi (0) = a \cdot 0^{2} + \bar c = \bar c < c = 0 $ by Claim~\ref{envcl4}.

(b) If $ p > 0 $ and $ 0 \leq x \leq p $, then we just need to show that $ a_{p} \leq a $, since then we can compute
$$ \varphi (x) \geq a_{p}x^{2} + c_{p} = a_{p}p^{2} + c_{p} - a_{p}(p^{2} - x^{2}) \geq $$
$$ \geq \alpha (p) - a(p^{2} - x^{2}) = ax^{2} + c > ax^{2} + \bar c = \varphi (x). $$
It follows from
$$ \alpha (p) = a_{p}p^{2} + c_{p}, \quad \alpha (q) \geq a_{p}q^{2} + c_{p}, $$
that
$$ a(q^{2} - p^{2}) = \alpha (q) - \alpha (p) \geq a_{p}(q^{2} - p^{2}). $$

(c) If $ q \leq x \leq r $, then we just need to show that $ a \leq a_{q} $, since then we can compute
$$ \varphi (x) \geq a_{q}x^{2} + c_{q} = a_{q}q^{2} + c_{q} + a_{q}(x^{2} - q^{2}) \geq $$
$$ \geq \alpha (q) + a(x^{2} - q^{2}) = ax^{2} + c > ax^{2} + \bar c = \varphi (x). $$
It follows from
$$ \alpha (q) = a_{q}q^{2} + c_{q}, \quad \alpha (p) \geq a_{q}p^{2} + c_{q}, $$
that
$$ a(q^{2} - p^{2}) = \alpha (q) - \alpha (p) \leq a_{q}(q^{2} - p^{2}). $$

(d) If $ p < x < q $, then
$$ \alpha (x) < \varphi (x) = ax^{2} + \bar c \leq \alpha (x). $$

So, $ c = \bar c $ indeed. Now, assuming $ p > 0 $, we arrive at
\begin{eqnarray*}
\int _{p}^{q} \frac{\alpha (x)}{x^{2}} \, dx & \geq & \int _{p}^{q} \Big( a + \frac{c}{x^{2}} \Big) dx = a(q - p) + c\Big( \frac{1}{p} - \frac{1}{q} \Big) \\
 & = & \frac{\alpha (q) - \alpha (p)}{q^{2} - p^{2}} \cdot (q - p) + \frac{q^{2} \alpha (p) - p^{2} \alpha (q)}{q^{2} - p^{2}} \cdot \frac{q - p}{pq} \\
 & = & \frac{q-p}{q+p} \bigg[ \frac{\alpha (q)}{q} + \frac{\alpha (p)}{p} \bigg] = \frac{q-p}{q+p} \bigg[ \frac{\varphi (q)}{q} + \frac{\varphi (p)}{p} \bigg] \\
 & \geq & \frac{q-p}{q} \bigg[ \frac{\varphi (q)}{q} + \frac{\varphi (p)}{p} \bigg] = - \frac{q-p}{q} \big( \psi (q) + \psi (p) \big) .
\end{eqnarray*}
Assuming $ p = 0 $, we obtain $ a = \alpha (q)/q^{2}, \, c = 0 $ from Claim \ref{envcl4} and compute
$$ \int _{p}^{q} \frac{\alpha (x)}{x^{2}} \, dx \geq \int _{p}^{q} a \, dx = aq = \frac{\alpha (q)}{q} = - \psi (q) = - \frac{q-p}{q} \big( \psi (q) + \psi (p) \big) . $$
\end{proof}

\begin{claim} \label{envcl8}
We have
$$ \int _{0}^{\infty } \frac{\alpha (x)}{x^{2}} \, dx \geq 2 \frac{\varphi (r)}{r} + 3 \int _{0}^{r} \frac{\varphi (x)}{x^{2}} \, dx. $$
\end{claim}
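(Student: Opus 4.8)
The plan is to pass to the measure picture. As $ \varphi $ is assumed lower semi-continuous and is non-increasing it is right-continuous, and $ \varphi (0) = 0 $ because $ \varphi (x)/x \to 0 $; hence $ -\varphi $ is a right-continuous non-decreasing function vanishing at $ 0 $, so there is a finite positive Borel measure $ \mu $ on $ (0, r] $ with $ \varphi (y) = -\mu ((0, y]) $ for $ y \in [0, r] $. A Tonelli computation turns $ \int _{0}^{r} \varphi (x) x^{-2} \, dx $ into $ -\int _{(0, r]} (t^{-1} - r^{-1}) \, d\mu (t) $, so the right-hand side of the claim rewrites as $ r^{-1} \mu ((0, r]) - 3 \int _{(0, r]} t^{-1} \, d\mu (t) $. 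If this last integral is $ +\infty $ the claim is trivially true, so from now on assume it is finite.

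The key step is a disintegration estimate for $ \alpha $. For $ t \in (0, r] $ put $ \alpha _{t}(x) = - \min \{ 1, x^{2}/t^{2} \} $; a direct evaluation of the defining supremum gives this (it is exactly the envelope the lemma attaches to the step function which is $ -1 $ on $ [t, r] $ and $ 0 $ on $ [0, t) $), and $ \int _{0}^{\infty } \alpha _{t}(x) x^{-2} \, dx = -2/t $ by elementary integration. I would then show that $ \alpha (x) \geq \int _{(0, r]} \alpha _{t}(x) \, d\mu (t) $ for every $ x > 0 $ by producing a single admissible competitor. Fix $ x > 0 $ and set
$$ P(y) = - \mu \big( (0, x) \big) - y^{2} \int _{[x, r]} \frac{1}{t^{2}} \, d\mu (t) , \qquad y \in \mathbb{R} . $$
This is a parabola $ a y^{2} + c $ with $ a, c \leq 0 $; moreover it is the $ \mu $-average of the functions $ y \mapsto -y^{2}/t^{2} $ (for $ t \geq x $) and $ y \mapsto -1 $ (for $ t < x $), each of which is $ \leq \varphi $-piece, i.e. minorizes on $ [0, r] $ the step that is $ -1 $ on $ [t, r] $ and $ 0 $ elsewhere. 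Consequently $ P \leq \varphi $ on $ [0, r] $, so $ P $ competes in the supremum defining $ \alpha (x) $, while $ P(x) = \int _{(0, r]} \alpha _{t}(x) \, d\mu (t) $ because the piece chosen at $ t $ takes the value $ \alpha _{t}(x) $ at $ x $. This proves the pointwise inequality.

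Finally, integrate this inequality against $ dx/x^{2} $ and apply Tonelli to the non-negative kernel $ - \alpha _{t}(x) x^{-2} = \min \{ x^{-2}, t^{-2} \} $ to get $ \int _{0}^{\infty } \alpha (x) x^{-2} \, dx \geq \int _{(0, r]} (-2/t) \, d\mu (t) $. The claim is then immediate from the elementary inequality $ \int _{(0, r]} t^{-1} \, d\mu (t) \geq r^{-1} \mu ((0, r]) $ (valid since $ t \leq r $ on the support of $ \mu $), which rearranges to $ -2 \int _{(0, r]} t^{-1} \, d\mu \geq r^{-1} \mu ((0, r]) - 3 \int _{(0, r]} t^{-1} \, d\mu $. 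The point that requires most care is verifying $ P \leq \varphi $ on all of $ [0, r] $ — one splits into the cases $ t \geq x $ versus $ t < x $ and $ y < t $ versus $ y \geq t $ — together with keeping the bookkeeping of possibly $ -\infty $-valued integrals consistent; the rest is routine. (This route bypasses Claims~\ref{envcl6} and~\ref{envcl7}; alternatively one could decompose $ (0, r) $ into the components of $ \{ \alpha < \varphi \} $ and invoke Claim~\ref{envcl7} on each, but the measure-theoretic argument seems more transparent.)
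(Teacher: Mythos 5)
Your proof is correct, and it takes a genuinely different route from the paper. The paper's argument is combinatorial in nature: it introduces $\psi(x)=-\varphi(x)/x$, controls its positive and negative variations, decomposes the open set $G=\{x:\alpha(x)<\varphi(x)\}$ into its connected components $(p_i,q_i)$, and then invokes the two local estimates of Claims~\ref{envcl6} and~\ref{envcl7} on each component before summing. You instead write $\varphi(\cdot)=-\mu((0,\cdot\,])$ for the Lebesgue--Stieltjes measure $\mu$ of $-\varphi$ and reduce everything to a single pointwise minorization $\alpha(x)\geq\int_{(0,r]}\alpha_t(x)\,d\mu(t)$, exhibited via one admissible competitor parabola $P(y)=-\mu((0,x))-y^2\int_{[x,r]}t^{-2}\,d\mu(t)$, after which one Tonelli application finishes. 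This bypasses Claims~\ref{envcl6} and~\ref{envcl7} entirely. It is worth noting that your argument actually yields the sharper bound
$$ \int_0^\infty\frac{\alpha(x)}{x^2}\,dx\;\geq\;-2\int_{(0,r]}\frac{1}{t}\,d\mu(t)\;=\;2\,\frac{\varphi(r)}{r}+2\int_0^r\frac{\varphi(x)}{x^2}\,dx, $$
which is at least as strong as the claim since $\int_0^r\varphi(x)x^{-2}\,dx\leq0$, so the constant $3$ can in fact be replaced by $2$. What the paper's route buys is that it stays entirely within the function space and sidesteps any measure-theoretic and $\pm\infty$ bookkeeping; what your route buys is a cleaner linear structure, identifying the step functions $-\mathbf{1}_{[t,r]}$ as extremal building blocks whose parabolic envelopes $\alpha_t(x)=-\min\{1,x^2/t^2\}$ are explicit, and showing that the envelope operation is superadditive under such superpositions. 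Both your bookkeeping of the degenerate $-\infty$ case and the verification that $P(y)\leq\varphi(y)$ (splitting $t\geq x$ versus $t<x$ and, within the first case, $y<t$ versus $y\geq t$) are handled correctly.
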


\begin{proof}
We may assume that
$$ \int _{0}^{r} \frac{\varphi (x)}{x^{2}} \, dx > - \infty . $$
Let us denote
$$ V_{+} \psi = \sup \sum _{i=1}^{n} \Big( \psi (x_{i}) - \psi (x_{i-1}) \Big) _{+}, $$
$$ V_{-} \psi = \sup \sum _{i=1}^{n} \Big( \psi (x_{i}) - \psi (x_{i-1}) \Big) _{-}, $$
where the supremum is taken over all partitions $ 0 = x_{0} < x_{1} < \dots < x_{n} = r $. For $ 0 < p < q \leq r $, we have
$$ \int _{p}^{q} \frac{\varphi (x)}{x^{2}} \, dx \leq \int _{p}^{q} \frac{\varphi (p)}{x^{2}} \, dx = \varphi (p) \Big( \frac{1}{p} - \frac{1}{q} \Big) \leq \frac{\varphi (p)}{p} - \frac{\varphi (q)}{q} = - \psi (p) + \psi (q). $$
That is,
$$ - \big( \psi (q) - \psi (p) \big) \leq - \int _{p}^{q} \frac{\varphi (x)}{x^{2}} \, dx, \quad 0 \leq p < q \leq r $$
(the formula is valid also when $ p = 0 $ due to the assumption $ \varphi (x)/x \rightarrow 0 $). Given a partition $ 0 = x_{0} < x_{1} < \dots < x_{n} = r $,
$$ \sum _{i=1}^{n} \Big( \psi (x_{i}) - \psi (x_{i-1}) \Big) _{-} \leq - \sum _{i=1}^{n} \int _{x_{i-1}}^{x_{i}} \frac{\varphi (x)}{x^{2}} \, dx = - \int _{0}^{r} \frac{\varphi (x)}{x^{2}} \, dx. $$
Thus,
$$ V_{-} \psi \leq - \int _{0}^{r} \frac{\varphi (x)}{x^{2}} \, dx $$
and
$$ V_{+} \psi = V_{-} \psi + \psi (r) - \psi (0) \leq \psi (r) - \int _{0}^{r} \frac{\varphi (x)}{x^{2}} \, dx. $$

Now, let us consider the set
$$ G = \{ x \in [0, r] : \alpha (x) < \varphi (x) \} . $$
This set is open due to Claims \ref{envcl4}, \ref{envcl5} and our assumption that $ \varphi $ is lower semi-continuous. Thus, we can write
$$ G = \bigcup _{i} (p_{i}, q_{i}) $$
where the intervals $ (p_{i}, q_{i}) $ are pairwise disjoint. Applying Claims \ref{envcl6} and \ref{envcl7}, we can compute
\begin{eqnarray*}
- \int _{0}^{r} \frac{\alpha (x)}{x^{2}} \, dx & = & - \int _{0}^{r} \frac{\varphi (x)}{x^{2}} \, dx + \int _{G} \frac{\varphi (x)}{x^{2}} \, dx - \int _{G} \frac{\alpha (x)}{x^{2}} \, dx \\
 & \leq & - 2\int _{0}^{r} \frac{\varphi (x)}{x^{2}} \, dx + 2\int _{G} \frac{\varphi (x)}{x^{2}} \, dx - \int _{G} \frac{\alpha (x)}{x^{2}} \, dx \\
 & = & - 2\int _{0}^{r} \frac{\varphi (x)}{x^{2}} \, dx + 2\sum _{i} \int _{p_{i}}^{q_{i}} \frac{\varphi (x)}{x^{2}} \, dx - \sum _{i} \int _{p_{i}}^{q_{i}} \frac{\alpha (x)}{x^{2}} \, dx \\
 & \leq & - 2\int _{0}^{r} \frac{\varphi (x)}{x^{2}} \, dx + \sum _{i} \frac{q_{i} - p_{i}}{q_{i}} \big( - 2\psi (p_{i}) + \psi (q_{i}) + \psi (p_{i}) \big) \\
 & \leq & - 2\int _{0}^{r} \frac{\varphi (x)}{x^{2}} \, dx + \sum _{i} \Big( \psi (q_{i}) - \psi (p_{i}) \Big) _{+} \\
 & \leq & - 2\int _{0}^{r} \frac{\varphi (x)}{x^{2}} \, dx + V_{+} \psi \\
 & \leq & - 2\int _{0}^{r} \frac{\varphi (x)}{x^{2}} \, dx + \psi (r) - \int _{0}^{r} \frac{\varphi (x)}{x^{2}} \, dx.
\end{eqnarray*}
It remains just to realize that, due to Claim \ref{envcl4},
$$ \int _{r}^{\infty } \frac{\alpha (x)}{x^{2}} \, dx = \varphi (r) \cdot \int _{r}^{\infty } \frac{1}{x^{2}} \, dx = \varphi (r) \cdot \frac{1}{r}. $$
\end{proof}

\section{Sufficient conditions}\label{sec7}

In this section, we combine Lemma \ref{extension} with Lemma \ref{envelope} and obtain a general extension result. The basic idea of its proof is that we touch a function $ g $ from below by the trace of a separately convex function at every point of a dense set. Taking the supremum of extensions of those functions, we obtain a separately convex function with $ g $ as the trace. The only thing we need to take care of is that the supremum is finite at every point.

\begin{proposition} \label{sufficientprop}
Let $ g : [a, b] \rightarrow \mathbb{R} $ be a Lipschitz function. For every differentiability point $ u $ of $ g $, let us denote
$$ \varphi _{u}(x) = \min \Big\{ g(u + t) - g(u) - g'(u)t : |t| \leq x, u + t \in [a, b] \Big\} , \quad x \geq 0. $$
If there are constants $ K, \varepsilon > 0 $ and a dense subset $ D \subset [a, b] $ consisting of differentiability points of $ g $ such that
$$ \int _{0}^{\varepsilon } \frac{\varphi _{u}(x)}{x^{2}} \, dx \geq -K, \quad u \in D, $$
then there is a separately convex function $ f : \mathbb{R}^{2} \rightarrow \mathbb{R} $ such that $ f(u, u) = g(u) $ for each $ u \in [a, b] $.
\end{proposition}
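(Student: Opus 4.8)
\noindent
The plan is to realize $ f $ as a pointwise supremum of separately convex functions, one for each point of $ D $, each of which touches the graph of $ g $ over the diagonal from below at the corresponding point; the only delicate issue is that this supremum must stay finite, which I will ensure by estimates uniform in $ u \in D $.

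First I would set $ r = b - a $ and let $ L $ be the Lipschitz constant of $ g $ on $ [a, b] $. Fix $ u \in D $. The restriction $ \varphi _{u}|_{[0, r]} $ is non-increasing, takes values in $ (-\infty , 0] $ (since $ t = 0 $ is admissible), and satisfies $ \varphi _{u}(x)/x \rightarrow 0 $ as $ x \searrow 0 $ because $ g $ is differentiable at $ u $. Hence Lemma~\ref{envelope} applies and yields an even Lipschitz $ \alpha _{u} \colon \mathbb{R} \rightarrow (-\infty , 0] $ with $ \alpha _{u}(x)/x \rightarrow 0 $, with $ \alpha _{u}'(x)/x $ non-decreasing for $ x > 0 $, with $ \alpha _{u}(x) \leq \varphi _{u}(x) $ for $ x \in [0, r] $ (and $ \alpha _{u} \equiv \varphi _{u}(r) $ on $ [r, \infty ) $), and with
$$ \int _{0}^{\infty } \frac{\alpha _{u}(x)}{x^{2}} \, dx \ \geq \ 2 \frac{\varphi _{u}(r)}{r} + 3 \int _{0}^{r} \frac{\varphi _{u}(x)}{x^{2}} \, dx . $$
The key observation is that every quantity here is controlled independently of $ u $: taking $ a = 0 $, $ c = \varphi _{u}(r) $ in the definition of $ \alpha _{u} $ gives $ -2Lr \leq \varphi _{u}(r) \leq 0 $ and $ -2Lr \leq \alpha _{u} \leq 0 $; and $ \int _{0}^{r} \varphi _{u}(x) x^{-2} \, dx \geq -K $ if $ r \leq \varepsilon $, while otherwise $ \int _{0}^{r} = \int _{0}^{\varepsilon } + \int _{\varepsilon }^{r} \geq -K - 2L\log (r/\varepsilon ) $ using $ \varphi _{u}(x) \geq -2Lx $. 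So there is a constant $ M_{0} $, depending only on $ g, [a, b], K, \varepsilon $, with $ \int _{0}^{\infty } \alpha _{u}(x) x^{-2} \, dx \geq -M_{0} $ for all $ u \in D $; in particular $ \int _{0}^{1} \alpha _{u}(x) x^{-2} \, dx $ converges.

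Next I would apply Lemma~\ref{extension} to each $ \alpha _{u} $ (its hypotheses are now all verified), obtaining $ \beta _{u}(x) = -x \int _{0}^{x} \alpha _{u}(t) t^{-2} \, dt $ and a separately convex $ F_{u} \colon \mathbb{R}^{2} \rightarrow \mathbb{R} $ for which the defining formula gives $ F_{u}(s, s) = \alpha _{u}(s) $ and $ F_{u}(0,0) = 0 $. Since $ \alpha _{u} \leq 0 $, one has $ 0 \leq \beta _{u}(x) \leq M_{0}|x| $, and then the formula for $ F_{u} $ together with $ -2Lr \leq \alpha _{u} \leq 0 $ gives the uniform estimate $ |F_{u}(x, y)| \leq 2Lr + M_{0} \max \{ |x|, |y| \} $ (alternatively one may invoke Remark~\ref{extensionbound}). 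Now put
$$ H_{u}(x, y) \ = \ F_{u}(x - u, \, y - u) + g(u) + g'(u) \, \frac{(x - u) + (y - u)}{2} , $$
a translate of $ F_{u} $ plus an affine function, hence separately convex. Then $ H_{u}(u, u) = g(u) $, and for every $ t \in [a, b] $, using $ F_{u}(s, s) = \alpha _{u}(s) $, the evenness of $ \alpha _{u} $, the inequality $ \alpha _{u} \leq \varphi _{u} $ on $ [0, r] \supseteq [0, |t - u|] $, and the definition of $ \varphi _{u} $ applied to the admissible increment $ t - u $,
$$ H_{u}(t, t) = \alpha _{u}(|t - u|) + g(u) + g'(u)(t - u) \leq \varphi _{u}(|t - u|) + g(u) + g'(u)(t - u) \leq g(t) . $$

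Finally I would set $ f = \sup _{u \in D} H_{u} $. By the uniform bound on $ F_{u} $ and the boundedness of $ g $ and $ g' $ on $ [a, b] $, all the $ H_{u} $ are dominated by one fixed finite function of $ (x, y) $; together with $ f \geq H_{u_{0}} $ for a fixed $ u_{0} \in D $ this shows $ f $ is everywhere real-valued. On each axis-parallel line, $ f $ is then a supremum of convex functions of one variable, which is convex since it is finite-valued; so $ f $ is separately convex, hence (being locally Lipschitz, as recalled in the preliminaries) $ t \mapsto f(t, t) $ is continuous. On $ [a, b] $ we have $ f(t, t) = \sup _{u} H_{u}(t, t) \leq g(t) $, while $ f(u, u) \geq H_{u}(u, u) = g(u) $ for $ u \in D $; thus $ f(\cdot , \cdot ) $ and $ g $ coincide on the dense set $ D $, and being continuous they coincide on all of $ [a, b] $. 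I expect the main obstacle to be precisely the bookkeeping that makes every constant above independent of $ u $ --- above all the choice $ r = b - a $, which is needed both so that $ [0, r] $ covers all values of $ |t - u| $ with $ t, u \in [a, b] $ and so that the Lipschitz estimate on $ [\varepsilon , r] $ upgrades the given bound on $ [0, \varepsilon ] $ to a uniform bound on $ [0, r] $.
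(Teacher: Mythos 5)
Your proposal is correct and follows essentially the same route as the paper: for each $u\in D$ you build the parabolic envelope $\alpha_u$ from Lemma~\ref{envelope}, feed it into Lemma~\ref{extension}, add back the affine part $g(u)+g'(u)\cdot\tfrac12(x+y-2u)$, verify a uniform linear upper bound so that the supremum over $D$ is finite, and conclude by density and continuity of the trace. The bookkeeping (including the choice $r=b-a$ and the splitting $\int_0^r=\int_0^\varepsilon+\int_\varepsilon^r$) matches the paper's estimates, with only cosmetic differences in the constants.
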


\begin{proof}
For every $ u \in D $, we define
$$ \alpha _{u}(x) = \sup \Big\{ ax^{2} + c : a, c \leq 0, ay^{2} + c \leq \varphi _{u}(y) \textrm{ for } 0 \leq y \leq b - a \Big\} , \quad x \in \mathbb{R}. $$
We notice that
\begin{equation} \label{comp20} u + t \in [a, b] \quad \Rightarrow \quad \alpha _{u}(t) \leq \varphi _{u}(|t|) \leq g(u + t) - g(u) - g'(u)t. \end{equation}
By Lemma \ref{envelope},
\begin{itemize}
\item $ \alpha _{u} $ is Lipschitz and $ \alpha _{u}(x)/x \rightarrow 0 $ as $ x \rightarrow 0 $,
\item $ \alpha '_{u}(x)/x $ is non-decreasing on its domain for $ x > 0 $,
\item $ \int _{0}^{\infty } \frac{\alpha _{u}(x)}{x^{2}} \, dx \geq 2 \frac{\varphi _{u}(b-a)}{b-a} + 3 \int _{0}^{b-a} \frac{\varphi _{u}(x)}{x^{2}} \, dx $.
\end{itemize}
Let $ L $ be a Lipschitz constant of $ g $. As $ \varphi _{u}(x) \geq -2Lx $ for $ x \geq 0 $, the assumption of the proposition implies that there is a $ K' > 0 $ such that
$$ \int _{0}^{b - a} \frac{\varphi _{u}(x)}{x^{2}} \, dx \geq -K', \quad u \in D. $$
Considering the constant $ C = 4L + 3K' $, we obtain
\begin{equation} \label{comp21} \int _{0}^{\infty } \frac{\alpha _{u}(x)}{x^{2}} \, dx \geq -C, \quad u \in D. \end{equation}

For every $ u \in D $, we further define functions $ \beta _{u} : \mathbb{R} \rightarrow \mathbb{R} $ and $ F_{u} : \mathbb{R}^{2} \rightarrow \mathbb{R} $ by
$$ \beta _{u}(x) = -x \int _{0}^{x} \frac{\alpha _{u}(t)}{t^{2}} \, dt, $$
$$ F_{u}(x,y) = \left\{\begin{array}{ll}
\frac{1}{2x} [(x+y)\alpha _{u}(x) + (x-y)\beta _{u}(x)], & \quad |x| \geq |y|, \, x \neq 0, \\
\frac{1}{2y} [(y+x)\alpha _{u}(y) + (y-x)\beta _{u}(y)], & \quad |x| < |y|, \\
0, & \quad x = y = 0.
\end{array} \right. $$
The function $ F_{u} $ is separately convex due to Lemma \ref{extension}. Using (\ref{comp21}), we can write
$$ \beta _{u}(x) = -x \int _{0}^{x} \frac{\alpha _{u}(t)}{t^{2}} \, dt \leq -x \int _{0}^{\infty } \frac{\alpha _{u}(t)}{t^{2}} \, dt \leq Cx, \quad x \geq 0, \; u \in D. $$
Since $ \alpha _{u}(x) \leq 0 \leq Cx $ for $ x \geq 0 $ at the same time, it follows that
\begin{equation} \label{comp22} F_{u}(x, y) \leq C \max \{ |x|, |y| \} , \quad x, y \in \mathbb{R},\;u\in D. \end{equation}

Finally, we put
$$ f_{u}(x, y) = F_{u}(x - u, y - u) + g(u) + g'(u) \cdot \frac{1}{2} (x + y - 2u), \quad x, y \in \mathbb{R}, \;u \in D,$$
and
\begin{equation} \label{comp23} f(x, y) = \sup _{u \in D} f_{u}(x, y), \quad x, y \in \mathbb{R}. \end{equation}
For $ u \in D $, the function $ f_{u} $ is separately convex, as $ F_{u} $ is separately convex. Due to (\ref{comp22}), we have
$$ f_{u}(x, y) \leq C \max \{ |x - u|, |y - u| \} + g(u) + L \cdot \frac{1}{2} |x + y - 2u|. $$
If $ x $ and $ y $ are fixed, then the values on the right hand side are bounded. Thus, $ f $ is a well-defined separately convex function.

It remains to show that $ f(v, v) = g(v) $ for each $ v \in [a, b] $. Given $ v \in [a, b] $ and $ u \in D $, we denote $ t = v - u $ and use (\ref{comp20}) to compute
$$ f_{u}(v, v) = F_{u}(t, t) + g(u) + g'(u) t = \alpha _{u}(t) + g(u) + g'(u) t \leq g(v). $$
It follows that $ f(v, v) \leq g(v) $ for $ v \in [a, b] $. It is sufficient to check the opposite inequality $ f(v, v) \geq g(v) $ for the elements of a dense subset of $ [a, b] $ only. For $ u \in D $, we have
$$ f(u, u) \geq f_{u}(u, u) = F_{u}(0, 0) + g(u) + g'(u) \cdot 0 = g(u). $$
This completes the proof of the proposition.
\end{proof}

\begin{theorem} \label{sufficientthm}
Let $ g : \mathbb{R} \rightarrow \mathbb{R} $ be a locally Lipschitz function. For every differentiability point $ u $ of $ g $, let us denote
$$ \varphi _{u}(x) = \min \Big\{ g(u + t) - g(u) - g'(u)t : |t| \leq x \Big\} , \quad x \geq 0. $$
If there are an $ \varepsilon > 0 $ and a dense subset $ D \subset \mathbb{R} $ consisting of differentiability points of $ g $ such that the function
$$ u \in D \; \mapsto \; \int _{0}^{\varepsilon } \frac{\varphi _{u}(x)}{x^{2}} \, dx $$
is bounded on every bounded subset of $ D $, then there is a separately convex function $ f : \mathbb{R}^{2} \rightarrow \mathbb{R} $ such that $ f(u, u) = g(u) $ for each $ u \in \mathbb{R} $.
\end{theorem}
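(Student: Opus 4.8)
The plan is to deduce Theorem~\ref{sufficientthm} from Proposition~\ref{sufficientprop}, applied on each bounded interval, by gluing the resulting local extensions by means of Proposition~\ref{locglob}. Since a separately convex function on $\er^{2}$ is automatically locally Lipschitz, so is its trace, and therefore the hypothesis of Proposition~\ref{locglob} amounts precisely to exhibiting, for each bounded interval, a separately convex function extending $g$ on the diagonal over that interval. Thus it suffices to produce such a function for an arbitrary fixed interval $[a,b]$.

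First I would restrict $g$ to $[a,b]$; this restriction is Lipschitz because $g$ is locally Lipschitz. As the required dense set of differentiability points I would take $D \cap (a,b)$: it is dense in $[a,b]$, and each of its points is a differentiability point of $g$, hence of $g|_{[a,b]}$, with the same derivative. It remains to verify the integral condition of Proposition~\ref{sufficientprop}. Let $\varphi_{u}^{[a,b]}$ denote the quantity occurring there, namely the minimum of $g(u+t)-g(u)-g'(u)t$ over those $t$ with $|t| \le x$ and $u+t \in [a,b]$, and let $\varphi_{u}$ be the quantity of Theorem~\ref{sufficientthm}, where the minimum is over all $t$ with $|t| \le x$. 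Because the first minimum is taken over a smaller set, $\varphi_{u}^{[a,b]}(x) \ge \varphi_{u}(x)$ for every $x \ge 0$ and every $u \in (a,b)$; multiplying by $1/x^{2} \ge 0$ and integrating,
$$ \int_{0}^{\varepsilon} \frac{\varphi_{u}^{[a,b]}(x)}{x^{2}} \, dx \ \ge \ \int_{0}^{\varepsilon} \frac{\varphi_{u}(x)}{x^{2}} \, dx, \qquad u \in D \cap (a,b). $$
By hypothesis the right-hand side, as a function of $u$, is bounded below on the bounded set $D \cap [a,b]$; fixing $K > 0$ with $\int_{0}^{\varepsilon} \varphi_{u}(x)/x^{2} \, dx \ge -K$ for all such $u$, we obtain the assumption of Proposition~\ref{sufficientprop} with the constants $K$ and $\varepsilon$. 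That proposition then supplies a separately convex function $f_{[a,b]} : \er^{2} \rightarrow \er$ with $f_{[a,b]}(u,u) = g(u)$ for each $u \in [a,b]$. Having produced $f_{[a,b]}$ for every bounded interval $[a,b]$, and $g$ being locally Lipschitz, Proposition~\ref{locglob} (in dimension $d = 2$) yields a separately convex function $f : \er^{2} \rightarrow \er$ with $f(u,u) = g(u)$ for every $u \in \er$, which is the desired conclusion.

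I do not expect a genuine obstacle here; the content of the theorem over and above the bounded case is exactly this reduction. The one point that needs care is the comparison $\varphi_{u}^{[a,b]} \ge \varphi_{u}$ together with the remark that inside Proposition~\ref{sufficientprop} it is the restricted quantity $\varphi_{u}^{[a,b]}$ that appears, so that a lower bound for the unrestricted integral transfers automatically, and the extraction of a single constant $K$ from the phrase \uv{bounded on every bounded subset of $D$}. One cannot simply rerun the argument of Proposition~\ref{sufficientprop} globally: the bound on the auxiliary functions $F_{u}$ obtained there involves a constant depending, through the Lipschitz constant of $g$ and through $K$, on the interval in question, so the supremum defining $f$ need not be finite at every point of $\er^{2}$; this is precisely the reason Proposition~\ref{locglob} is brought in.
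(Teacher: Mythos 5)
Your proposal is correct and follows exactly the route the paper takes: the paper's proof is the one-line statement that it suffices to apply Propositions~\ref{sufficientprop} and~\ref{locglob}, and you have simply filled in the verification (in particular the comparison $\varphi_{u}^{[a,b]} \ge \varphi_{u}$ and the extraction of a single $K$ per interval) that the authors leave implicit. No discrepancies to note.
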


\begin{proof}
It is sufficient to apply Propositions \ref{sufficientprop} and \ref{locglob}.
\end{proof}

\begin{corollary} \label{semiconcave}
Let a function $ g : \mathbb{R} \rightarrow \mathbb{R} $ be locally semi-concave with a linear modulus. Then the following assertions are equivalent:

{\rm (i)} There is a separately convex function $ f : \mathbb{R}^{2} \rightarrow \mathbb{R} $ such that $ f(u, u) = g(u) $ for each $ u \in \mathbb{R} $.

{\rm (ii)} The function
$$ x \; \mapsto \; \int _{0}^{1} \frac{\omega _{g}(x, t)}{t^2} \, dt $$
is locally bounded from below.
\end{corollary}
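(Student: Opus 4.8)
The equivalence will be obtained by combining the necessary condition already established in Section~\ref{sec4} with the sufficient condition of Theorem~\ref{sufficientthm}. The implication $\mathrm{(i)}\Rightarrow\mathrm{(ii)}$ is the easy direction: it follows directly from Proposition~\ref{propdiff} (together with Remark~\ref{remint}, which guarantees that the integral $\int_0^1 \omega_g(x,t)/t^2\,dt$ exists), since that proposition asserts precisely that $-\liminf_{b\to 0+}\int_b^1 \omega_g(x,t)/t^2\,dt$ is bounded above on every bounded interval, and hence that $x\mapsto\int_0^1 \omega_g(x,t)/t^2\,dt$ is locally bounded from below. Note that this direction does not use the semi-concavity assumption at all.

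For the implication $\mathrm{(ii)}\Rightarrow\mathrm{(i)}$, the plan is to verify the hypothesis of Theorem~\ref{sufficientthm}. So fix a differentiability point $u$ of $g$ and consider $\varphi_u(x)=\min\{g(u+t)-g(u)-g'(u)t:|t|\le x\}$. The goal is to show that, on every bounded set of differentiability points $u$, the quantity $\int_0^\varepsilon \varphi_u(x)/x^2\,dx$ is bounded. The key step is to relate $\varphi_u$ to the second-order central difference $\omega_g(u,\cdot)$. Here is where local semi-concavity enters. Write $h(t)=g(u+t)-g(u)-g'(u)t$, so $h(0)=h'(0)=0$ and $h$ is locally semi-concave with a linear modulus $C$ near $0$; in particular $h(t)\le \tfrac12 C t^2$ for small $|t|$, and more importantly semi-concavity controls $h$ from below in terms of its symmetrized values: for $|t|\le x$ one has $h(t)+h(-t)=\omega_g(u,|t|)$, and semi-concavity of $h$ gives, for $t$ and $-t$ as endpoints with midpoint $0$,
$$
0=h(0)\ge \tfrac12 h(t)+\tfrac12 h(-t)-\tfrac12 Ct^2,
$$
hence $h(t)\ge \omega_g(u,|t|) - h(-t) \ge \omega_g(u,|t|)-\tfrac12 C t^2$ after bounding $h(-t)\le \tfrac12 C t^2$. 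Taking the minimum over $|t|\le x$ therefore yields $\varphi_u(x)\ge \min_{0\le s\le x}\omega_g(u,s) - \tfrac12 C x^2 = \omega_g^*(u,x)-\tfrac12 Cx^2$ (using that $\omega_g(u,\cdot)$ vanishes at $0$, so its nonincreasing minorant is the running minimum). Integrating $1/x^2$ against this bound against $[0,\varepsilon]$ gives
$$
\int_0^\varepsilon \frac{\varphi_u(x)}{x^2}\,dx \ge \int_0^\varepsilon \frac{\omega_g^*(u,x)}{x^2}\,dx - \tfrac12 C\varepsilon,
$$
so it remains to bound $\int_0^\varepsilon \omega_g^*(u,x)/x^2\,dx$ from below on bounded sets of $u$.

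The final step is to deduce this lower bound from hypothesis~(ii). Since $\omega_g^*(u,\cdot)\le\omega_g(u,\cdot)$ pointwise and $\omega_g^*(u,\cdot)$ is the greatest nonincreasing minorant, one has control of $\int_0^\varepsilon \omega_g^*(u,x)/x^2\,dx$ in terms of $\int_0^\varepsilon \omega_g(u,x)/x^2\,dx$ together with the semi-concavity bound $\omega_g(u,x)\le Cx^2$; indeed, the argument in the proof of Theorem~\ref{necess2} shows exactly how to pass between the two integrals using that $g$ is Lipschitz on a neighbourhood (giving $\omega_g(u,x)\ge -2Lx$ and hence controlling the part where $\omega_g$ and $\omega_g^*$ differ). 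Concretely, one localizes to a bounded interval $I$, uses the local Lipschitz and semi-concavity constants of $g$ on a slightly larger interval, and reads off from (ii) that $\int_0^1 \omega_g(x,t)/t^2\,dt\ge -M$ for $x\in I$; combined with the estimates above, this gives a uniform lower bound for $\int_0^\varepsilon \varphi_u(x)/x^2\,dx$ over $u\in D\cap I$, which is precisely what Theorem~\ref{sufficientthm} requires. The main obstacle is the bookkeeping in this last step — carefully tracking how the semi-concavity modulus and the Lipschitz constant interact when passing from $\omega_g$ to $\omega_g^*$ to $\varphi_u$, and making sure the bound is genuinely uniform over bounded sets of differentiability points rather than just pointwise. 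I should also remark that the restriction "locally semi-concave with a linear modulus" is what makes $\varphi_u$ essentially symmetric (controlled by $\omega_g$); without it, $\varphi_u$ could be far smaller than $\omega_g^*$, which is consistent with the fact that (ii) does not characterize traces in general (cf. Example~\ref{example2}).
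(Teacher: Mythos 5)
Your implication $\mathrm{(i)}\Rightarrow\mathrm{(ii)}$ is exactly the paper's (Proposition~\ref{propdiff} plus Remark~\ref{remint}), no issue there.

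For $\mathrm{(ii)}\Rightarrow\mathrm{(i)}$ you take a genuinely different route from the paper: you try to estimate $\varphi_u$ directly for the semi-concave $g$, whereas the paper first decomposes $g = g_1 + g_2$ with $g_1$ concave and $g_2 \in \C^2$, handles $g_2$ by Lemma~\ref{smoothcase2}, and then observes that for concave $g_1$ one has $\varphi_u(x) \geq \omega_{g_1}(u,x)$ \emph{exactly} (the minimum over $|t|\leq x$ sits at $\pm x$, and $\min\{h(x),h(-x)\} \geq h(x)+h(-x)$ because both are $\leq 0$), so Theorem~\ref{sufficientthm} applies at once. Your approach would be slightly more self-contained if it worked, since it avoids the decomposition; but as written it has a genuine gap at the last step. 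You reduce to bounding $\int_0^\varepsilon \omega_g^*(u,x)/x^2\,dx$ from below and then assert that this is controlled by $\int_0^\varepsilon \omega_g(u,x)/x^2\,dx$ \uv{together with the semi-concavity bound $\omega_g(u,x)\leq Cx^2$,} appealing to the proof of Theorem~\ref{necess2} and local Lipschitzness. Neither ingredient delivers the implication. The proof of Theorem~\ref{necess2} derives the bound on $\int \omega_g^*/t^2$ directly from the existence of a separately convex extension, not from a bound on $\int \omega_g/t^2$; and the Lipschitz bound $\omega_g(u,x)\geq -2Lx$ alone gives $\int_0^\varepsilon \omega_g^*/x^2 \geq -2L\int_0^\varepsilon dx/x = -\infty$, which is useless. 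In fact, Example~\ref{example2} exhibits a Lipschitz function for which $\int \omega_g/t^2$ is uniformly bounded while $\int \omega_g^*/t^2 = -\infty$, so no general Lipschitz-only passage from one integral to the other can exist.

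What actually closes the gap is a sharper use of semi-concavity than the pointwise upper bound $\omega_g \leq Cx^2$. Since $\tilde g := g - \tfrac{C}{2}(\cdot)^2$ is concave, $\omega_{\tilde g}(u,\cdot) = \omega_g(u,\cdot) - Ct^2$ is \emph{nonincreasing} on $(0,\infty)$; hence for $0\leq s\leq x$ one has $\omega_g(u,s) \geq \omega_g(u,s) - Cs^2 \geq \omega_g(u,x) - Cx^2$, i.e.\ $\omega_g^*(u,x) \geq \omega_g(u,x) - Cx^2$, and then $\int_0^\varepsilon \omega_g^*/x^2 \geq \int_0^\varepsilon \omega_g/x^2 - C\varepsilon$ as required. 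Cleaner still, you can avoid $\omega_g^*$ altogether: with $\tilde h(t) = h(t) - \tfrac{C}{2}t^2$ concave, $\tilde h(0)=\tilde h'(0)=0$, the minimum of $\tilde h$ on $[-x,x]$ is at $\pm x$, so $\varphi_u(x) = \min_{|t|\leq x} h(t) \geq \min_{|t|\leq x}\tilde h(t) = \min\{\tilde h(x),\tilde h(-x)\} \geq \tilde h(x)+\tilde h(-x) = \omega_g(u,x) - Cx^2$, after which Theorem~\ref{sufficientthm} applies directly. (As a side remark, your midpoint-semi-concavity display is both off by a factor of two and not actually used in the estimate that follows it; the inequality you use is just $h(t)+h(-t) = \omega_g(u,|t|)$ plus the bound $h(-t)\leq \tfrac{C}{2}t^2$.)
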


\begin{proof} The implication $ \mathrm{(i)} \Rightarrow \mathrm{(ii)} $ is valid for every locally Lipschitz function (Proposition~\ref{propdiff} and Remark~\ref{remint}). We prove the opposite implication $ \mathrm{(ii)} \Rightarrow \mathrm{(i)} $ for a concave $ g $ only. This is allowed by Lemma~\ref{smoothcase2}, as the functions which are locally semi-concave with a linear modulus are exactly the ones which can be expressed as a sum of a concave function and a $ \C ^{2} $-function.

For a concave function $ g $ which satisfies $ \mathrm{(ii)} $, let us show that the property from Theorem~\ref{sufficientthm} is met. Let $ D $ be the (necessarily dense) set of all differentiability points of $ g $. Due to the concavity, for every $ u \in D $, we have
\begin{eqnarray*}
\varphi _{u}(x) & = & \min \Big\{ g(u + t) - g(u) - g'(u)t : t = \pm x \Big\} \\
 & \geq & \Big( g(u + x) - g(u) - g'(u)x \Big) + \Big( g(u - x) - g(u) - g'(u)(-x) \Big) \\
 & = & \omega _{g}(u, x),
\end{eqnarray*}
and the implication follows.
\end{proof}

\begin{corollary} \label{locsemiconvex}
Let a function $ g : \mathbb{R} \rightarrow \mathbb{R} $ have the property that every bounded interval $ I $ admits a modulus $ \omega = \omega _{I} $ with
$$ \int _{0}^{1} \frac{\omega (t)}{t} \; dt < \infty $$
such that $ g|_{I} $ is semi-convex with the modulus $ \omega $.
Then there is a separately convex function $ f : \mathbb{R}^{2} \rightarrow \mathbb{R} $ such that $ f(u, u) = g(u) $ for each $ u \in \mathbb{R} $.
\end{corollary}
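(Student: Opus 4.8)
The plan is to derive this from Theorem~\ref{sufficientthm}. First I would record that $ g $ is locally Lipschitz: a function that is semi-convex with some modulus on an interval is locally Lipschitz on the interior of that interval, and by hypothesis the restriction of $ g $ to every bounded interval is of this kind. In particular $ g $ is differentiable at almost every point, so the set $ D $ of all differentiability points of $ g $ is dense. I would then fix $ \varepsilon = 1 $ and aim to verify the hypothesis of Theorem~\ref{sufficientthm}, namely that the function $ u \mapsto \int_{0}^{1} \varphi_{u}(x)/x^{2}\,dx $ is bounded on every bounded subset of $ D $.

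To this end, fix a bounded set $ B \subseteq D $ and choose a bounded interval $ I $ such that $ u + t \in I $ whenever $ u \in B $ and $ |t| \leq 1 $. Let $ g|_{I} $ be semi-convex with a modulus $ \omega = \omega_{I} $, which (being a modulus of semi-convexity) is non-decreasing and satisfies $ \int_{0}^{1} \omega(t)/t\,dt < \infty $. The heart of the argument is the pointwise estimate
$$ g(u + t) - g(u) - g'(u)\,t \;\geq\; -|t|\,\omega(|t|), \qquad u \in B,\ |t| \leq 1. $$
To prove it I would apply the defining inequality of semi-convexity on $ I $ to the collinear triple $ u $, $ u + \tau t = (1-\tau)u + \tau(u+t) $, $ u + t $ with $ \tau \in (0,1) $, rearrange, divide by $ \tau $, and let $ \tau \to 0+ $; since $ g $ is differentiable at $ u $ the left-hand side tends to $ g'(u)t $, which gives the displayed bound (the case $ t = 0 $ being trivial). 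Because $ s \mapsto s\,\omega(s) $ is non-decreasing on $ [0,\infty) $, this yields
$$ \varphi_{u}(x) \;=\; \min_{|t| \leq x}\bigl(g(u+t) - g(u) - g'(u)t\bigr) \;\geq\; -x\,\omega(x), \qquad 0 \leq x \leq 1, $$
and hence, using also $ \varphi_{u} \leq 0 $,
$$ -\int_{0}^{1} \frac{\omega(x)}{x}\,dx \;\leq\; \int_{0}^{1} \frac{\varphi_{u}(x)}{x^{2}}\,dx \;\leq\; 0, \qquad u \in B. $$
The lower bound is finite and depends only on $ B $, so the function from Theorem~\ref{sufficientthm} is bounded on $ B $, and the theorem produces a separately convex $ f : \mathbb{R}^{2} \to \mathbb{R} $ with $ f(u,u) = g(u) $ for all $ u $.

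I do not expect a genuine obstacle here: once Theorem~\ref{sufficientthm} is available, the statement is essentially a repackaging of it, and the pointwise estimate, although it is the crux, is a short standard computation. The one point that really needs attention is the \emph{localization}: one cannot in general use a single modulus on all of $ \mathbb{R} $, but since $ \varphi_{u}(x) $ only sees points within distance $ \varepsilon $ of $ u $, restricting $ u $ to a bounded set $ B $ confines the relevant points to a bounded interval $ I $, on which a modulus with $ \int_{0}^{1}\omega/t < \infty $ is available by hypothesis; this is precisely the feature of Theorem~\ref{sufficientthm} (built on Proposition~\ref{locglob}) that makes the reduction work. A secondary, purely technical matter is the convention that moduli of semi-convexity are non-decreasing, which the argument above uses when passing from $ -|t|\omega(|t|) $ to $ -x\omega(x) $.
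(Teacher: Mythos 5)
Your proof is correct and reaches the same destination (Theorem~\ref{sufficientthm}), but it takes a genuinely leaner route than the paper's. The paper first invokes Proposition~\ref{locglob} to reduce to a single global modulus $\omega_{\mathbb R}$, then assumes $\omega$ continuous (citing a result of Duda and Zaj\'{\i}\v{c}ek), introduces $\Omega(t)=\int_0^t\omega$, checks $\int_0^1\Omega(t)/t^2\,dt<\infty$ via integration by parts, and finally quotes a second result from the same authors to get the one-sided bound $g(z+t)-g(z)-tg'_+(z)\geq -2\Omega(|t|)$, which yields $\varphi_u(x)\geq -2\Omega(x)$. You instead exploit the fact that the hypothesis of Theorem~\ref{sufficientthm} is already local --- boundedness of $u\mapsto\int_0^\varepsilon\varphi_u(x)/x^2\,dx$ on bounded subsets of $D$ --- so no globalization of the modulus is needed, and you derive the pointwise bound $g(u+t)-g(u)-g'(u)t\geq -|t|\,\omega(|t|)$ directly from the defining inequality of semi-convexity by the standard difference-quotient-and-limit manipulation, which is self-contained and avoids both citations. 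Your bound is in fact tighter than the paper's (since $\Omega(|t|)\leq|t|\,\omega(|t|)$, one has $-|t|\,\omega(|t|)\geq -2\Omega(|t|)$), and the passage from it to $\varphi_u(x)\geq -x\,\omega(x)$ correctly uses only the monotonicity of $s\mapsto s\,\omega(s)$, which holds for any (non-decreasing) modulus. In short, both approaches are valid; yours buys self-containedness and a simpler bookkeeping at the cost of making explicit the small limiting argument that the paper delegates to a reference.
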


\begin{proof}
Proposition \ref{locglob} allows us to assume that $ g $ is semi-convex on the whole line with a suitable modulus $ \omega = \omega _{\mathbb{R}} $. We may suppose that $ \omega $ is continuous (see e.g. \cite[Corollary 3.6]{DZ1}). Let us define
$$ \Omega (t) = \int _{0}^{t} \omega (s) \, ds, \quad t \geq 0. $$
Notice that
\begin{equation} \label{semi1} \int_0^1\frac{\Omega(t)}{t^2}\;dt=\left[-\frac{\Omega(t)}{t}\right]_0^1+\int_0^1\frac{\Omega'(t)}{t}\;dt=-\Omega(1)+\int_0^1\frac{\omega(t)}{t}\;dt<\infty. \end{equation}
By \cite[Proposition~2.8]{DZ2}, we can write
\begin{equation} \label{semi2} g(z + t) - g(z) - tg'_{+}(z) \geq -2\Omega (|t|), \quad z, t \in \mathbb{R}. \end{equation}
Now, considering the function $ \varphi _{u} $ from Theorem~\ref{sufficientthm} for a differentiability point $ u $ of $ g $, we obtain
$$ \varphi _{u}(x) \geq -2\Omega (x), \quad x \geq 0, $$
and it is sufficient to use (\ref{semi1}).
\end{proof}

\begin{remark}
There is a proof of Theorem~\ref{semiconvex} which is more natural in a manner and does not need the machinery of Section~\ref{sec6}. Let us briefly sketch the construction of $ f $. The modulus can be chosen so that it satisfies, among the continuity, that $ \omega (t)/t $ is non-increasing on $ (0, \infty ) $ and constant on $ [p, \infty ) $ for some $ p > 0 $ (see e.g. \cite[Corollary 3.6]{DZ1}). We pick
$$ \alpha (x) = - 2\Omega (|x|), \quad x \in \mathbb{R}. $$
The assumptions of Lemma~\ref{extension} are fulfilled by (\ref{semi1}) and the additional properties of $ \omega $. Let $ \beta $ and $ F $ be as in Lemma~\ref{extension} and let $ f $ be defined by
$$ f(x, y) = \sup _{u \in \mathbb{R}} \Big( F(x - u, y - u) + g(u) + g'_{+}(u) \cdot \frac{1}{2} (x + y - 2u) \Big) , \quad x, y \in \mathbb{R}, $$
(cf. with (\ref{comp23})). One can show that the function is well-defined using Remark~\ref{extensionbound} and (\ref{semi2}). Moreover, $ f(u, u) = g(u) $ for each $ u \in \mathbb{R} $, which is a consequence of (\ref{semi2}).
\end{remark}

\section{A modification of the extension method}\label{sec007}

Although the extension methods developed in previous sections can be applied to a reasonably general class of functions, there is a natural group of traces that does not seem to be covered. It should be also noted that, in the view of Corollary~\ref{extensmodifcor}, there is no analogue of Theorem~\ref{necess2} which, instead of the symmetric difference, would consider the values of a function only on one side.

\begin{proposition} \label{extensmodif}
Let $ \gamma : (c, \infty ) \to \mathbb{R} $ be a three times differentiable function such that
\begin{equation}\label{derivaceineq}
\gamma \geq 0, \quad \gamma ' \leq 0, \quad \gamma '' \geq 0, \quad \gamma ''' \leq 0 
\end{equation}
on $ (c, \infty ) $. Then the function $ f : (-e^{-c}, e^{-c})^{2} \rightarrow \mathbb{R} $ given by
$$ f(x,y) = \left\{\begin{array}{ll}
- \gamma (\log \frac{1}{x}) \cdot (2y-x) & \\
 \quad \quad - \gamma ' (\log \frac{1}{x}) \cdot \frac{1}{2x}(x-y)(2x-y), & \quad e^{-c} > x \geq |y|, \, x \neq 0, \\
- \gamma (\log (-\frac{1}{x})) \cdot (2x-y) &\\
 \quad \quad - \gamma ' (\log (-\frac{1}{x})) \cdot \frac{3}{2} (y-x), & \quad -e^{-c} < x \leq -|y|, \, x \neq 0, \\
0, & \quad x = y = 0, \\
f(y,x), & \quad \textrm{in the remaining cases,}
\end{array} \right. $$
is separately convex.
\end{proposition}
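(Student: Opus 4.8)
The plan is to use the built-in symmetry $ f(x,y)=f(y,x) $: convexity of $ f $ in the $ y $-direction then follows from convexity in the $ x $-direction, so it suffices to prove that for each fixed $ y_{0}\in(-e^{-c},e^{-c}) $ the section $ x\mapsto f(x,y_{0}) $ is convex on $ (-e^{-c},e^{-c}) $. First I would name the regions: $ A=\{x\geq|y|,\,x\neq0\} $ and $ B=\{x\leq-|y|,\,x\neq0\} $ carry the explicit formulas, $ (0,0) $ is isolated, and on the complement $ D=\{|x|<|y|\} $ one has $ f(x,y)=f(y,x) $ with $ (y,x) $ lying in $ A $ (if $ y>0 $) or in $ B $ (if $ y<0 $); this makes $ f $ well defined. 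A short check shows the formulas agree on the two diagonals and, using the fact that (\ref{derivaceineq}) forces $ \gamma $ to decrease to a finite limit $ \gamma_{\infty}\geq0 $ and $ \gamma',\gamma'' $ to tend to $ 0 $ at $ +\infty $, that $ f $ extends continuously to $ (0,0) $; hence $ f $ is continuous on the whole square.

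Next I would carry out the interior estimates after substituting $ s=\log(1/|x|)\in(c,\infty) $ and, in each region, the ratio of the coordinates. In $ A $ (where $ x>0 $, $ u:=y/x\in[-1,1] $) one finds $ \partial_{yy}f=-\gamma'(s)/x\geq0 $, and a direct computation in the variable $ s $ gives
$$ \partial_{xx}f=-\frac{1}{x}\Big[\gamma'(s)(1+u)^{2}+\tfrac12\gamma''(s)(3u+4)(u-1)+\tfrac12\gamma'''(s)(u-1)(u-2)\Big]. $$
On $ [-1,1] $ the three bracketed polynomials are $ \geq0 $, $ \leq0 $, $ \geq0 $ respectively, so by (\ref{derivaceineq}) every summand is $ \leq0 $ and $ \partial_{xx}f\geq0 $. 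In $ B $ (where $ x<0 $) the function $ f $ is affine in $ y $, and with $ v:=y/x\in[-1,1] $ the analogous computation yields
$$ \partial_{xx}f=\frac{1}{x}\Big[(2+v)\gamma'(s)-\tfrac12(7+v)\gamma''(s)-\tfrac32(v-1)\gamma'''(s)\Big], $$
whose bracket is $ \leq0 $ by (\ref{derivaceineq}), hence $ \partial_{xx}f\geq0 $ since $ x<0 $. Inside $ D $ convexity in $ x $ is automatic: there $ f(x,y_{0})=f(y_{0},x) $ is a polynomial of degree $ \leq2 $ in $ x $ (the $ s $-value is frozen at $ \log(1/|y_{0}|) $), with second $ x $-derivative equal to $ -\gamma'(\log(1/|y_{0}|))/y_{0}\geq0 $ when $ y_{0}>0 $ and to $ 0 $ when $ y_{0}<0 $.

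It remains to glue the pieces: I would check that at each interface point of the section ($ x=\pm y_{0} $, and $ x=0 $ when $ y_{0}=0 $) the left-hand $ x $-derivative does not exceed the right-hand one. For each of the finitely many interface types one reads off the two one-sided values of $ \partial_{x}f $ from the explicit formulas and subtracts; the difference (right minus left) comes out as one of $ 3\gamma $, $ -2\gamma'+3\gamma'' $, $ 3\gamma-4\gamma' $, or $ 3\gamma_{\infty} $, each non-negative by (\ref{derivaceineq}). A standard gluing argument for convex functions then shows $ x\mapsto f(x,y_{0}) $ is convex, which completes the proof.

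The work is essentially bookkeeping: keeping track of which branch of the formula governs which sub-interval of a horizontal section (this depends on $ \operatorname{sgn}y_{0} $), and differentiating carefully in the logarithmic variable. The one substantive point is spotting the factorizations $ (1+u)^{2} $, $ \tfrac12(3u+4)(u-1) $, $ \tfrac12(u-1)(u-2) $ (and the corresponding linear factors in region $ B $), which make the sign of $ \partial_{xx}f $ transparent; it is worth noting that the hypothesis $ \gamma'''\leq0 $ enters only in these interior estimates and plays no role at the interfaces.
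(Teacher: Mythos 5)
Your proposal is correct and follows essentially the same route as the paper: verify $\partial_{xx}f\geq0$ in each of the four open quadrants via the sign structure of the quadratic factors multiplying $\gamma',\gamma'',\gamma'''$, then compare one-sided $x$-derivatives across the two diagonals; your bracketed expressions in regions $A$ and $B$ and the interface quantities $3\gamma$, $3\gamma-4\gamma'$, $-2\gamma'+3\gamma''$ match the paper's (\ref{2dA1}), (\ref{2dA3}) and its four diagonal inequalities after the substitution $u=y/x$. The only (welcome) addition is your explicit derivative comparison at the origin for the section $y_{0}=0$, giving $3\gamma_{\infty}\geq0$, which the paper dispatches only with a remark about continuity at $(0,0)$.
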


\begin{proof}
The proof is a straightforward computation provided in two steps.
Firstly we prove the separate convexity only in the quadrants $Q_1:=\{x>|y|\}$, $Q_2:=\{y>|x|\}$, $Q_3:=\{x<-|y|\}$ and $Q_4:=\{y<-|x|\}$, simply by proving the non-negativity of the (unmixed) second partial derivatives.
Secondly we verify that the diagonals $x=y$ and $x=-y$ do not spoil the convexity by observing that $f$ is continuous and then by computing the first derivatives on these diagonals.
Due to the symmetry we need to prove the convexity only on the lines parallel to the $x$-axis.

Define functions 
$$
A_1(x,y):=- \gamma \Big( \log \frac{1}{x} \Big) \cdot (2y-x) - \gamma ' \Big( \log \frac{1}{x} \Big) \cdot \frac{1}{2x}(x-y)(2x-y),
$$
$$
A_3(x,y):=- \gamma \Big( \log \big( -\frac{1}{x} \big) \Big) \cdot (2x-y) - \gamma ' \Big( \log \big( -\frac{1}{x} \big) \Big) \cdot \frac{3}{2} (y-x),
$$
$$
A_2(x,y):=A_1(y,x) \quad\text{and}\quad A_4(x,y):=A_3(y,x). \phantom{\Big( \Big)}
$$
Then $f(x,y)=A_i(x,y)$ for $(x,y)\in Q_i$, $i=1,\dots, 4$, whenever one of the functions is defined.

First we compute the partial derivatives.
A simple computation shows that
\begin{equation}\label{1dA1}
\begin{aligned}
\frac{\partial A_1}{\partial x}(x,y)=\frac{1}{2x^2} \bigg[ 2x^2\gamma \Big( & \log \frac{1}{x} \Big) + (y^2-4x^2+4xy)\gamma ' \Big( \log \frac{1}{x} \Big) \\
 & +(2x^2-3xy+y^2)\gamma '' \Big( \log \frac{1}{x} \Big) \bigg] ,
\end{aligned}
\end{equation}
\begin{equation}\label{1dA2}
\frac{\partial A_2}{\partial x}(x,y)=\frac{1}{2y} \bigg[ -4y\gamma \Big( \log\frac{1}{y} \Big) +(3y-2x)\gamma' \Big( \log\frac{1}{y} \Big) \bigg] ,
\end{equation}
\begin{equation}\label{1dA3}
\begin{aligned}
\frac{\partial A_3}{\partial x}(x,y)=\frac{1}{2x} \bigg[ -4x\gamma \Big( \log & \big( -\frac{1}{x} \big) \Big) +(7x-2y)\gamma' \Big( \log \big( -\frac{1}{x} \big) \Big) \\
 & +3(y-x)\gamma'' \Big( \log \big( -\frac{1}{x} \big) \Big) \bigg] ,
\end{aligned}
\end{equation}
\begin{equation}\label{1dA4}
\frac{\partial A_4}{\partial x}(x,y)=\gamma \Big( \log \big( -\frac{1}{y} \big) \Big) -\frac{3}{2}\gamma' \Big( \log \big( -\frac{1}{y} \big) \Big) ,
\end{equation}
\begin{equation}\label{2dA1}
\begin{aligned}
\frac{\partial ^2 A_1}{\partial x^2}(x,y)=\frac{1}{2x^3} \bigg[ -2(x+y)^2\gamma ' \Big( & \log \frac{1}{x} \Big) + (x-y)(4x+3y)\gamma '' \Big( \log \frac{1}{x} \Big) \\
 & +(x-y)(y-2x)\gamma ''' \Big( \log \frac{1}{x} \Big) \bigg] ,
\end{aligned}
\end{equation}
\begin{equation}\label{2dA2}
\frac{\partial ^2 A_2}{\partial x^2}(x,y)=-\frac{1}{y} \gamma' \Big( \log \frac{1}{y} \Big) ,
\end{equation}
\begin{equation}\label{2dA3}
\begin{aligned}
\frac{\partial ^2 A_3}{\partial x^2}(x,y)=\frac{1}{2x^2} \bigg[ 2(2x+y) \gamma ' \Big( & \log \big( -\frac{1}{x} \big) \Big) -(7x+y) \gamma '' \Big( \log \big( -\frac{1}{x} \big) \Big) \\
 & +3(x-y) \gamma ''' \Big( \log \big( -\frac{1}{x} \big) \Big) \bigg] ,
\end{aligned}
\end{equation}
\begin{equation}\label{2dA4}
\frac{\partial ^2 A_4}{\partial x^2}(x,y)=0.
\end{equation}
Now, $(x+y)^2\geq 0$ for any $x,y$ and $x>|y|$ implies $(x-y)(4x+3y)\geq 0$ and $(x-y)(y-2x)\leq 0$.
Hence, using \eqref{2dA1}, we obtain that $\frac{\partial ^2 f}{\partial x^2}(x,y)\geq 0$ whenever $x>|y|$.
Similarly, $\frac{\partial ^2 f}{\partial x^2}(x,y)\geq 0$ whenever $y>|x|$ using \eqref{2dA2}.
Using the symmetry of $f$ we obtained that $f$ is separately convex in both $Q_1$ and $Q_2$.

To verify separate convexity in $Q_3$ and $Q_4$ we observe that $x<-|y|$ implies $2x+y\leq 0$, $7x+y\leq 0$ and $x-y\leq 0$, 
which, using \eqref{2dA3}, implies $\frac{\partial ^2 f}{\partial x^2}(x,y)\geq 0$ in $Q_3$ and similarly we get the same result for $Q_4$ using \eqref{2dA4}.

To prove the second part, we note that it is easy to check that $f$ is continuous in the direction of the $x$-axis in the points $(a,\pm a)$, including $(0,0)$. So we just need to observe the following four inequalities.

If $0<x=y$ then (using \eqref{1dA1} and \eqref{1dA2})
$$
\gamma \Big( \log \frac{1}{x} \Big) +\frac{1}{2}\gamma ' \Big( \log \frac{1}{x} \Big) =\frac{\partial A_1}{\partial x}(x,x)\geq \frac{\partial A_2}{\partial x}(x,x)=-2\gamma \Big( \log \frac{1}{x} \Big) +\frac{1}{2}\gamma ' \Big( \log \frac{1}{x} \Big) .
$$
If $0>x=y$ then (using \eqref{1dA3} and \eqref{1dA4})
$$
\begin{aligned}
\gamma \Big( \log \big( -\frac{1}{x} \big) \Big) -\frac{3}{2}\gamma ' \Big( \log \big( -\frac{1}{x} \big) \Big) =\frac{\partial A_4}{\partial x}(x,x)\geq \frac{\partial A_3}{\partial x}(x,x)\\
=-2\gamma \Big( \log \big( -\frac{1}{x} \big) \Big) +\frac{5}{2}\gamma ' \Big( \log \big( -\frac{1}{x} \big) \Big) .
\end{aligned}
$$
If $0<x=-y$ then (using \eqref{1dA1} and \eqref{1dA4})
$$
\begin{aligned}
\gamma \Big( \log \frac{1}{x} \Big) -\frac{7}{2}\gamma ' \Big( \log \frac{1}{x} \Big) & +3\gamma '' \Big( \log \frac{1}{x} \Big) =\frac{\partial A_1}{\partial x}(x,-x)
\geq \frac{\partial A_4}{\partial x}(x,-x)\\
 & =\gamma \Big( \log \frac{1}{x} \Big) -\frac{3}{2}\gamma ' \Big( \log \frac{1}{x} \Big) .
\end{aligned}
$$
Finally, if $0>x=-y$ then (using \eqref{1dA2} and \eqref{1dA3})
$$
\begin{aligned}
-2\gamma \Big( \log \big( -\frac{1}{x} \big) \Big) +\frac{5}{2}\gamma ' \Big( \log \big( -\frac{1}{x} \big) \Big) =\frac{\partial A_2}{\partial x}(x,-x)\geq \frac{\partial A_3}{\partial x}(x,-x)\\
=-2\gamma \Big( \log \big( -\frac{1}{x} \big) \Big) +\frac{9}{2}\gamma ' \Big( \log \big( -\frac{1}{x} \big) \Big) -3\gamma '' \Big( \log \big( -\frac{1}{x} \big) \Big) .
\end{aligned}
$$

The above inequalities reduce to inequalities
$$
3\gamma \Big( \log \frac{1}{x} \Big) \geq 0\quad\text{and}\quad -2\gamma ' \Big( \log \frac{1}{x} \Big) +3\gamma '' \Big( \log \frac{1}{x} \Big) \geq 0
$$
whenever $x>0$ and 
$$
3\gamma \Big( \log \big( -\frac{1}{x} \big) \Big) -4\gamma ' \Big( \log \big( -\frac{1}{x} \big) \Big) \geq 0
$$
and
$$
-2\gamma ' \Big( \log \big( -\frac{1}{x} \big) \Big) +3\gamma '' \Big( \log \big( -\frac{1}{x} \big) \Big) \geq 0
$$
whenever $x<0$, which all hold due to \eqref{derivaceineq}.
\end{proof}

\begin{corollary} \label{extensmodifcor}
Let $ h : [0, a) \to \mathbb{R} $ be a function which is bounded from below such that $ h(0) = 0 $ and $ h'_{+}(0) = 0 $. Then there exists a separately convex function $ f : (-a, a)^{2} \to \mathbb{R} $ such that its trace $ g(t) = f(t, t) $ is an odd function with $ g'(0) = 0 $ and $ g(t) \leq h(t) $ for $ 0 \leq t < a $.
\end{corollary}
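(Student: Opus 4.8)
\emph{Plan of the proof.} The idea is to obtain $f$ directly from Proposition~\ref{extensmodif} applied to a cleverly chosen $\gamma$. Put $c=-\log a$, so that $e^{-c}=a$ and the proposition delivers separately convex functions on exactly $(-a,a)^{2}$ (if $a=+\infty$ one uses $c=-\infty$, for which the formulas of Proposition~\ref{extensmodif} still make sense; we suppress this case). Encode $h$ by setting, for $s\in(c,\infty)$,
$$ \phi(s)=\max\Big\{0,\,-e^{s}h(e^{-s})\Big\}, \qquad\text{equivalently}\qquad \phi\Big(\log\tfrac1t\Big)=\max\Big\{0,\,-\tfrac{h(t)}{t}\Big\}\ \text{ for }0<t<a. $$
Since $h$ is bounded from below and $h'_{+}(0)=0$ (so $h(t)/t\to0$ as $t\searrow0$), the quantity $-h(t)/t$ is bounded above on $(0,a)$; hence $\phi\ge0$ is bounded, say $\phi\le M<\infty$, and moreover $\phi(s)\to0$ as $s\to+\infty$, again because $h(t)/t\to0$ as $t=e^{-s}\searrow0$.

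The heart of the matter is to produce a three times differentiable $\gamma:(c,\infty)\to\mathbb{R}$ which satisfies the four sign conditions \eqref{derivaceineq}, dominates $\phi$, and tends to $0$ at $+\infty$. First replace $\phi$ by its non-increasing majorant $\bar\phi(s)=\sup_{\sigma\ge s}\phi(\sigma)$, which is still non-negative, bounded by $M$, and tends to $0$ at $+\infty$; it suffices to dominate $\bar\phi$. I would look for $\gamma$ of the form
$$ \gamma(s)=\int_{0}^{\infty}e^{-\lambda(s-c)}\,d\mu(\lambda),\qquad s>c, $$
for a suitable finite positive Borel measure $\mu$ on $(0,\infty)$. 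Any such $\gamma$ is completely monotone on $(c,\infty)$, so in particular $\gamma\ge0$, $\gamma'\le0$, $\gamma''\ge0$, $\gamma'''\le0$; it is of class $C^{\infty}$; and by dominated convergence (the integrand is bounded by $1\in L^{1}(\mu)$) it satisfies $\gamma(s)\to\mu(\{0\})=0$ as $s\to+\infty$. To choose $\mu$ so that $\gamma\ge\bar\phi$, use the layer-cake identity $\bar\phi(s)=\int_{0}^{M}\mathbf{1}\{\bar\phi(s)>y\}\,dy$; by monotonicity each set $\{\bar\phi>y\}$ is an interval with some right endpoint $\tau(y)$, and $\tau(y)<\infty$ for $y>0$ because $\bar\phi\to0$. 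Since $\mathbf{1}\{\bar\phi(s)>y\}\le e\,e^{-(s-c)/(\tau(y)-c)}$, integrating in $y$ exhibits $\bar\phi(s)$ as bounded above by $\int_{0}^{\infty}e^{-\lambda(s-c)}\,d\mu(\lambda)$, where $\mu$ is the push-forward of $e\,\mathbf{1}_{(0,M)}(y)\,dy$ under $y\mapsto1/(\tau(y)-c)$; this $\mu$ has total mass $eM<\infty$, as required. This construction of $\gamma$ is the only genuinely non-routine point; everything else is bookkeeping.

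Finally, apply Proposition~\ref{extensmodif} to this $\gamma$ and $c$, obtaining a separately convex $f:(-a,a)^{2}\to\mathbb{R}$. Evaluating the defining formula of $f$ on the diagonal $x=y$ — the first branch applies when $0<x=y<a$ and the second when $-a<x=y<0$, since these branches are stated on the closed regions $x\ge|y|$ resp.\ $x\le-|y|$ — one reads off that
$$ g(t):=f(t,t)=-\,t\,\gamma\Big(\log\tfrac1{|t|}\Big)\quad(t\ne0),\qquad g(0)=0. $$
Thus $g$ is odd, and since $g(t)/t=-\gamma(\log\tfrac1{|t|})\to-\lim_{s\to\infty}\gamma(s)=0$ as $t\to0$, we get $g'(0)=0$. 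For $0<t<a$, writing $s=\log\tfrac1t>c$ and using $\gamma\ge\phi$,
$$ g(t)=-t\gamma(s)\le-t\,\phi(s)\le-t\cdot\Big(-\tfrac1t h(t)\Big)=h(t), $$
while $g(0)=0=h(0)$; hence $g(t)\le h(t)$ for $0\le t<a$, which completes the argument. The main obstacle, as indicated, is the construction of $\gamma$ in the second paragraph — i.e.\ dominating an arbitrary bounded function that vanishes at infinity by a function obeying all four inequalities in \eqref{derivaceineq} and still vanishing at infinity; passing to a completely monotone $\gamma$ via a Laplace transform is what makes this manageable.
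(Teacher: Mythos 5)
Your argument is correct and mirrors the paper's proof in its global structure: set $c=\log\tfrac1a$, encode $h$ as a bounded function $\kappa$ on $(c,\infty)$ tending to $0$ at infinity, dominate $\kappa$ by a smooth $\gamma$ with $(-1)^k\gamma^{(k)}\ge 0$, feed $\gamma$ into Proposition~\ref{extensmodif}, and read off the trace $g(t)=-t\,\gamma(\log\tfrac1{|t|})$. Where you genuinely diverge is in how you produce the majorant $\gamma$. The paper isolates this as Claim~\ref{smoothmajorant} and proves it by reducing (``WLOG'') to a dyadic step function $\kappa=\sum_i 2^{-i}\mathbf 1_{(0,a_i]}$, then explicitly summing $\gamma=\sum_i 2^{-i}\cdot\frac{2a_i}{x+a_i}$ and checking term-by-term that the derivatives alternate in sign. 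You instead pass to the non-increasing envelope $\bar\phi$, use the layer-cake formula, and package the result as the Laplace transform of a finite measure on $(0,\infty)$, invoking complete monotonicity. The two are morally the same (note $\frac{1}{x+a}=\int_0^\infty e^{-\lambda(x+a)}\,d\lambda$ is itself a Laplace transform, so the paper's building blocks are the same objects), but your route avoids the unexplained ``without loss of generality'' reduction and makes the complete monotonicity conceptually transparent, at the mild cost of some measure-theoretic bookkeeping (e.g.\ ensuring $\tau(y)>c$ and $\tau(y)<\infty$ for a.e.\ $y$, and justifying differentiation under the integral sign — all true, but you state rather than prove them). One small presentational remark: you take $\phi=\max\{0,-e^{s}h(e^{-s})\}$ whereas the paper uses $\kappa=-e^{s}h(e^{-s})$ directly; since Claim~\ref{smoothmajorant} only needs $\kappa$ bounded above with limit $0$, the truncation is unnecessary, though harmless.
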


To prove the corollary, we need the following observation.

\begin{claim} \label{smoothmajorant}
Let $ \kappa : (c, \infty ) \to \mathbb{R} $ be a function which is bounded from above with $ \kappa (x) \to 0 $ as $ x \to \infty $. Then $ \kappa $ has a majorant $ \gamma : (c, \infty ) \to \mathbb{R} $ of the class $ \mathcal{C}^{\infty } $ so that $ \gamma (x) \to 0 $ as $ x \to \infty $ and
$$ (-1)^{k} \gamma ^{(k)} (x) \geq 0, \quad x > c, \; k = 0, 1, 2, \dots \; . $$
\end{claim}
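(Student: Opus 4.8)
\emph{Proof plan.} The plan is to reduce to the case of a non-increasing $\kappa$, then to majorize such a $\kappa$ by a geometrically weighted series of explicit $\mathcal{C}^{\infty}$ functions each satisfying $(-1)^{k}\gamma^{(k)}\ge 0$, where the $n$-th summand is chosen so as to dominate the indicator of a suitable interval $(c,t_{n}]$.

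First I would replace $\kappa$ by $\bar\kappa(x):=\sup\{\kappa(y):y\ge x\}$. Since $\kappa$ is bounded above, $\bar\kappa$ is a finite non-increasing function with the same upper bound; since $\kappa(x)\to 0$, one checks that $\bar\kappa(x)\to 0$ as $x\to\infty$ (and, incidentally, $\bar\kappa\ge 0$); and clearly $\bar\kappa\ge\kappa$. So any majorant of $\bar\kappa$ of the required type is also a majorant of $\kappa$, and from now on we may assume that $\kappa$ is itself non-increasing.

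The building block is the function
$$ \gamma_{t}(x)=\exp\Bigl(1-\frac{x-c}{t-c}\Bigr),\qquad x>c, $$
depending on a parameter $t>c$. Writing $a=1/(t-c)>0$, it is a constant multiple of $e^{-a(x-c)}$, hence of class $\mathcal{C}^{\infty}$ with $(-1)^{k}\gamma_{t}^{(k)}(x)=e\,a^{k}e^{-a(x-c)}\ge 0$ for every $k$; moreover $0<\gamma_{t}\le e$ on $(c,\infty)$, $\gamma_{t}(x)\to 0$ as $x\to\infty$, and $\gamma_{t}\ge 1$ on $(c,t]$ because $1-\frac{x-c}{t-c}\ge 0$ there. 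One also has the uniform bound $|\gamma_{t}^{(k)}|\le e\,(t-c)^{-k}$ on $(c,\infty)$. Thus $\gamma_{t}$ is a uniformly bounded $\mathcal{C}^{\infty}$ majorant of the indicator of $(c,t]$ satisfying the desired sign conditions.

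Now I would choose points $c<s_{1}<s_{2}<\cdots$ with $s_{n}\to\infty$ such that $\kappa\le 2^{-n}$ on $[s_{n},\infty)$ (possible since $\kappa$ is non-increasing and tends to $0$), fix an upper bound $M$ for $\kappa$, and set
$$ \gamma=M\gamma_{s_{1}}+\sum_{n=1}^{\infty}2^{-n}\gamma_{s_{n+1}}. $$
The estimate $|\gamma_{s_{n+1}}^{(k)}|\le e\,(s_{n+1}-c)^{-k}$ together with the weights $2^{-n}$ makes the series of $k$-th derivatives uniformly convergent on compact subsets of $(c,\infty)$ for every $k$, so $\gamma\in\mathcal{C}^{\infty}$ and may be differentiated term by term; since each summand satisfies the sign conditions, so does $\gamma$, i.e.\ $(-1)^{k}\gamma^{(k)}\ge 0$. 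The series is dominated by $\sum(M+2^{-n})e<\infty$ with each term tending to $0$, whence $\gamma(x)\to 0$ as $x\to\infty$. Finally $\gamma\ge\kappa$: for $x\in(c,s_{1}]$ one has $\gamma(x)\ge M\gamma_{s_{1}}(x)\ge M\ge\kappa(x)$, and for $x\in(s_{m},s_{m+1}]$ (with $m\ge 1$) one has $\gamma(x)\ge 2^{-m}\gamma_{s_{m+1}}(x)\ge 2^{-m}\ge\kappa(x)$; since $s_{n}\to\infty$, these cases exhaust $(c,\infty)$.

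The one genuine point is finding the right building block: once one observes that the indicator of an interval $(c,t]$ admits a uniformly bounded $\mathcal{C}^{\infty}$ majorant of the form $\exp\bigl(1-\tfrac{x-c}{t-c}\bigr)$ satisfying all the sign conditions, the remainder is bookkeeping — the passage to $\bar\kappa$, the staircase estimate for $\kappa$, and the standard fact that a geometrically weighted series of such functions is again $\mathcal{C}^{\infty}$ and inherits the sign conditions term by term.
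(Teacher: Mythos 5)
Your proof is correct and follows essentially the same strategy as the paper: reduce to a non-increasing function (you do this explicitly via the envelope $\bar\kappa$, the paper more tersely by passing to a dyadic step function $\sum 2^{-i}\mathbf{1}_{(0,a_i]}$), then majorize it by a dyadically weighted series of completely monotone building blocks that each dominate an indicator $\mathbf{1}_{(c,t]}$. The only real difference is the building block — you use the exponential $\exp\bigl(1-\tfrac{x-c}{t-c}\bigr)$ while the paper uses the rational function $\tfrac{2a_i}{x+a_i}$ — and your choice has the small advantage that the derivative bounds $e\,(t-c)^{-k}$ hold without any normalization such as the paper's requirement $a_i\ge 1$.
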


\begin{proof}
Without loss of generality, we consider the following special case. Let $ c = 0 $ and $ \kappa $ be of the form
$$ \kappa = \sum _{i=1}^{\infty } 2^{-i} \cdot \mathbf{1}_{(0, a_{i}]} $$
where $ a_{i} \geq 1 $ for $ i \in \mathbb{N} $. Let us show that
\begin{equation} \label{omega} \gamma = \sum _{i=1}^{\infty } 2^{-i} \cdot \gamma _{i} \end{equation}
where
$$ \gamma _{i}(x) = \frac{2a_{i}}{x + a_{i}}, \quad x > 0, $$
works. We have
$$ \gamma _{i}^{(k)} (x) = (-1)^{k} k! \cdot \frac{2a_{i}}{(x + a_{i})^{k+1}}. $$
Hence $ | \gamma _{i}^{(k)} (x) | \leq k! \cdot \frac{2a_{i}}{(0 + a_{i})^{k+1}} \leq 2 \cdot k! $, so the derivatives of the partial sums in (\ref{omega}) converge uniformly on $ (0, \infty ) $. We can write
$$ (-1)^{k} \gamma ^{(k)} (x) = (-1)^{k} \sum _{i=1}^{\infty } 2^{-i} \cdot \gamma _{i}^{(k)} (x) = k! \sum _{i=1}^{\infty } 2^{-i} \cdot \frac{2a_{i}}{(x + a_{i})^{k+1}} \geq 0. $$
\end{proof}

\begin{proof}[Proof of Corollary \ref{extensmodifcor}]
Let
$$ \kappa (x) = - \frac{h(e^{-x})}{e^{-x}}, \quad x > c := \log \frac{1}{a}, $$
and let $ \gamma : (c, \infty ) \to \mathbb{R} $ be a majorant given by Claim \ref{smoothmajorant}. Then Proposition \ref{extensmodif} provides us with a separately convex function $ f : (-a, a)^{2} \to \mathbb{R} $ such that
$$ f(t,t) = -f(-t,-t) = - t \cdot \gamma \Big( \log \frac{1}{t} \Big) \quad \textrm{for } 0 < t < a. $$
Its trace $ g(t) = f(t, t) $ is an odd function. Since $ \gamma (x) \to 0 $ as $ x \to \infty $, we obtain $ g'(0) = 0 $. For $ 0 < t < a $, we can write
$$ g(t) = - t \cdot \gamma \Big( \log \frac{1}{t} \Big) \leq - t \cdot \kappa \Big( \log \frac{1}{t} \Big) = h(t). $$
\end{proof}

\section{Examples}\label{sec8}
The first example illustrates that the traces of separately convex functions may not have a property typical for the semi-convex functions, 
the existence of one-sided derivatives at every point (see Figure~\ref{nondiff} in the introduction).

\begin{example}\label{example3}

Define a real function $h$ on $\er$ by $h(x):=\max(1-x^2,0)$.
Then $h$ is a Lipschitz function whose derivative exists everywhere except $-1$ and $1$ and  
$$
\frac{h'(x)}{x}= 
\begin{cases} 
-2& \mbox{if } 0<|x|<1, \\
0 & \mbox{if } |x|>1. 
\end{cases} 
$$
Also,
$$
\int_0^x\frac{h(t)-h(0)}{t^2}\;dt=
\begin{cases} 
-x & \mbox{if } 0<x\leq 1, \\
-2+\frac{1}{x} & \mbox{if } x>1. 
\end{cases}
$$
So using Lemma~\ref{extension} we can obtain a separately convex function $\tilde f$ such that $\tilde f(t,t)=h(t)$ for $t\in\er$ and such that
\begin{equation}\label{odhadex2}
\tilde f(x,y)\leq 1+2\max (|x|,|y|)
\end{equation}
whenever $x,y\in\er$.

For $n\in\en$ define functions $h_n:\er\to\er$ and $f_n:\er^2\to\er$ by $h_n(x):=3^{-n}h(3^nx-2)$ and $f_n(x,y):=3^{-n}\tilde f(3^nx-2,3^ny-2)$.
Consider functions $g:\er\to\er$ and $f:\er^2\to\er$ defined by 
$$
g(x):=\sup_{n}h_n(x)\quad\text{and}\quad f(x,y):=\sup_n f_n(x,y).
$$
First observe that $f$ is finite by \eqref{odhadex2} and therefore  $f$ is a separately convex function on $\er^2$ such that $f(t,t)=g(t)$ for $t\in\er$.
Note that $g=h_n$ on $[3^{-n},3^{-n+1}]$ which in particular means that $g\left(2\cdot 3^{-n}\right)=3^{-n}$ and $g\left(3^{-n}\right)=0$.
Therefore, $g'_+(0)$ does not exist and, in particular, $g$ is not semi-convex with any modulus.

\end{example}

\begin{example}\label{example1}

Define a function $g:(-1,1)\to\er$ by $g(x)=\frac{|x|}{\log|x|}$ for $x\not=0$ and $g(0)=0$. Then $g$ is $\C^1$ on $(-1,1)$ but 
$$
-\int\limits_{0}^{\frac{1}{2}}\frac{\omega_g(0,t)}{t^2}\;dt=-\int\limits_{0}^{\frac{1}{2}}\frac{2}{t\log t}\;dt=\infty.
$$
Note that $g$ is also concave on $(-1,1)$ and so by modifying it outside a neighbourhood of $0$ we can obtain a concave $\C^1$ function on $\er$
which (using e.g. Proposition~\ref{propdiff}) cannot be the trace of a separately convex function.
On the other hand, it might be worth noting that the function $\tilde g$ defined by $\tilde g(x)=\frac{x}{\log|x|}$ (and $\tilde g(0)=0$)
actually can be extended to a separately convex function $\tilde f$ on $(-1,1)^2$ using Proposition~\ref{extensmodif} (applied on $ \gamma(t) = \frac{1}{t} $).
\end{example}

The next example are actually two examples, in the first one we show that there is a function that satisfies the necessary condition from Proposition~\ref{propdiff}, but it is not a trace by Theorem~\ref{necess2}. The second one illustrates that even the condition from Theorem~\ref{necess2} is not sufficient.

\begin{example}\label{example2}

Define functions $\varphi:\er\to\er$ and $\varphi_u:\er\to\er,u>0,$ by
$$
\varphi(x):=
\begin{cases}
-\frac{\cos\left(\pi x\right)+1}{2} & \text{if}\quad |x|\leq 1,\\
0 & \text{if}\quad |x|>1,
\end{cases}
$$
and
$$
\varphi_u(x):=u\varphi\left(\frac{x}{u}\right).
$$
Put $t_i=\frac{1}{2^i}$, $u_i=\frac{1}{i2^i}$ and define functions $\phi_i:\er\to\er$ and $\psi:\er\to\er$  by 
$$
\phi_i(x):=\varphi_{u_i}(x-t_i)\quad\text{and}\quad \psi:=\sum\limits_{i=10}^{\infty}\phi_i.
$$ 

\begin{figure}[h]
\psset{unit=0.45mm}
\begin{pspicture}(-12,-52)(242,42)
\psset{linecolor=gray}
\psset{linewidth=0.6pt}
\psline(-10,0)(240,0)
\psline(0,-50)(0,40)
\psset{linecolor=black}
\psset{linewidth=1pt}
\psline(220,0)(240,0)
\psecurve(160,-20)(180,0)(200,-20)(220,0)(240,-20)
\psline(109.09,0)(180,0)
\psecurve(81.82,-9.09)(90.91,0)(100,-9.09)(109.09,0)(118.18,-9.09)
\psline(54.17,0)(90.91,0)
\psecurve(41.67,-4.17)(45.83,0)(50,-4.17)(54.17,0)(58.33,-4.17)
\psline(26.92,0)(45.83,0)
\psecurve(21.15,-1.92)(23.08,0)(25,-1.92)(26.92,0)(28.85,-1.92)
\psline(13.39,0)(23.08,0)
\psecurve(10.71,-0.89)(11.61,0)(12.5,-0.89)(13.39,0)(14.29,-0.89)
\psline(6.67,0)(11.61,0)
\psecurve(5.32,-0.42)(5.83,0)(6.25,-0.42)(6.67,0)(7.08,-0.42)
\psline(3.325,0)(5.83,0)
\psecurve(2.725,-0.2)(2.925,0)(3.125,-0.2)(3.325,0)(3.525,-0.2)
\psline(-10,0)(2.925,0)
\psset{linewidth=0.4pt}
\psline[linestyle=dashed](200,-35)(200,3)
\psline[linestyle=dashed](180,-35)(180,3)
\psline[linestyle=dashed](220,-35)(220,3)
\psline[linestyle=dashed](100,-17.5)(100,3)
\psline[linestyle=dashed](90.91,-17.5)(90.91,3)
\psline[linestyle=dashed](109.09,-17.5)(109.09,3)
\psline(100,-2)(100,2)
\psline(50,-1.5)(50,2)
\psline(25,-0.5)(25,2)
\psline[linestyle=dashed](165,-20)(225,-20)
\psline[linestyle=dashed](82,-9.09)(112.5,-9.09)
\psline{<->}(170,-20)(170,0)
\psline{<->}(180,-30)(200,-30)
\psline{<->}(200,-30)(220,-30)
\psline{<->}(85,-9.09)(85,0)
\psline{<->}(90.91,-15)(100,-15)
\psline{<->}(100,-15)(109.09,-15)
\put(200,10){\makebox(0,0){$ t_{10} $}}
\put(100,10){\makebox(0,0){$ t_{11} $}}
\put(50,10){\makebox(0,0){$ t_{12} $}}
\put(25,10){\makebox(0,0){$ t_{13} $}}
\put(11,10){\makebox(0,0){$ \dots $}}
\put(160,-10){\makebox(0,0){$ u_{10} $}}
\put(190,-38){\makebox(0,0){$ u_{10} $}}
\put(210,-38){\makebox(0,0){$ u_{10} $}}
\put(78,-5){\makebox(0,0){$ u_{11} $}}
\put(95,-22){\makebox(0,0){$ u_{11} $}}
\put(107.5,-22){\makebox(0,0){$ u_{11} $}}
\end{pspicture}
\caption{The graph of $ \psi $.}
\end{figure}
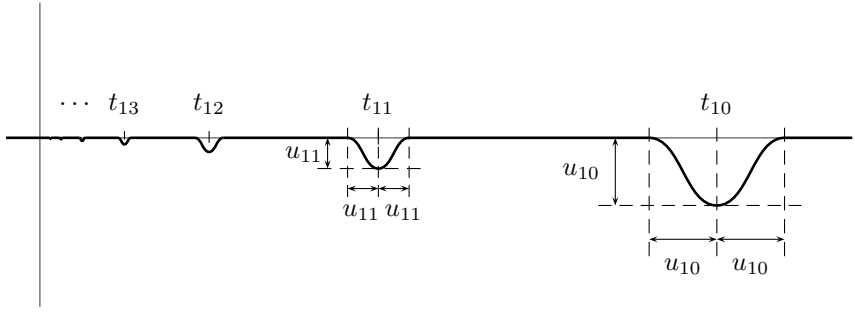

We will prove that there is a constant $C$ such that 
\begin{equation}\label{psi1}
-\int\limits_{0}^{\infty}\frac{\omega_{\psi}(x,s)}{s^2}\;ds\leq C
\end{equation}
for every $x\in\er$.
On the other hand, we will prove that
\begin{equation}\label{psi2}
-\int\limits_{0}^{1}\frac{\omega_{\psi}^*(0,s)}{s^2}\;ds=\infty.
\end{equation}

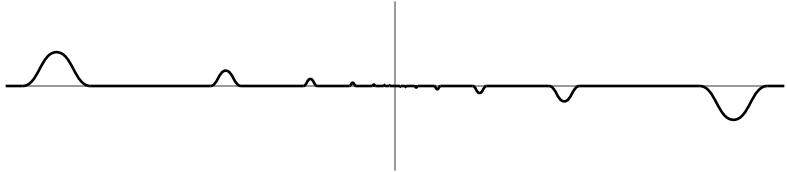
\begin{figure}[h]
\psset{unit=0.225mm}
\begin{pspicture}(-232,-52)(232,52)
\psset{linecolor=gray}
\psset{linewidth=0.6pt}
\psline(-230,0)(230,0)
\psline(0,-50)(0,50)
\psset{linecolor=black}
\psset{linewidth=1pt}
\psline(220,0)(230,0)
\psecurve(160,-20)(180,0)(200,-20)(220,0)(240,-20)
\psline(109.09,0)(180,0)
\psecurve(81.82,-9.09)(90.91,0)(100,-9.09)(109.09,0)(118.18,-9.09)
\psline(54.17,0)(90.91,0)
\psecurve(41.67,-4.17)(45.83,0)(50,-4.17)(54.17,0)(58.33,-4.17)
\psline(26.92,0)(45.83,0)
\psecurve(21.15,-1.92)(23.08,0)(25,-1.92)(26.92,0)(28.85,-1.92)
\psline(13.39,0)(23.08,0)
\psecurve(10.71,-0.89)(11.61,0)(12.5,-0.89)(13.39,0)(14.29,-0.89)
\psline(6.67,0)(11.61,0)
\psecurve(5.32,-0.42)(5.83,0)(6.25,-0.42)(6.67,0)(7.08,-0.42)
\psline(3.325,0)(5.83,0)
\psecurve(2.725,-0.2)(2.925,0)(3.125,-0.2)(3.325,0)(3.525,-0.2)
\psline(-2.925,0)(2.925,0)
\psline(-220,0)(-230,0)
\psecurve(-160,20)(-180,0)(-200,20)(-220,0)(-240,20)
\psline(-109.09,0)(-180,0)
\psecurve(-81.82,9.09)(-90.91,0)(-100,9.09)(-109.09,0)(-118.18,9.09)
\psline(-54.17,0)(-90.91,0)
\psecurve(-41.67,4.17)(-45.83,0)(-50,4.17)(-54.17,0)(-58.33,4.17)
\psline(-26.92,0)(-45.83,0)
\psecurve(-21.15,1.92)(-23.08,0)(-25,1.92)(-26.92,0)(-28.85,1.92)
\psline(-13.39,0)(-23.08,0)
\psecurve(-10.71,0.89)(-11.61,0)(-12.5,0.89)(-13.39,0)(-14.29,0.89)
\psline(-6.67,0)(-11.61,0)
\psecurve(-5.32,0.42)(-5.83,0)(-6.25,0.42)(-6.67,0)(-7.08,0.42)
\psline(-3.325,0)(-5.83,0)
\psecurve(-2.725,0.2)(-2.925,0)(-3.125,0.2)(-3.325,0)(-3.525,0.2)
\end{pspicture}
\caption{The graph of $ \xi $.}
\end{figure}

Moreover, in the view of Example~\ref{example1}, one might also wonder whether a function $\xi:\er\to\er$ defined by
$$
\xi(x):=
\begin{cases}
\psi(x) & \text{if}\quad x\geq 0,\\
-\psi(-x) & \text{if}\quad x<0,
\end{cases}
$$
can be the trace of a separately convex function $f$ on $\er^2$. In this case, we will argue that no such function $f$ exists using Remark~\ref{remmostgeneral}.
The function $\xi$, however, satisfies the necessary condition from Theorem~\ref{necess2}.
Indeed, we will prove that there is a constant $C^*$ such that 
\begin{equation}\label{odhadmaxpriklad}
-\int\limits_{0}^{\infty}\frac{\omega^{*}_{\xi}(x,s)}{s^2}\;ds\leq C^*
\end{equation}
for every $x\in\er$.

We start the proof with the following formulae. 
One can verify that there are constants $ C_{1}, C_{2} $ and $ C_{3} $ such that the following three inequalities hold (independently of $ u $ and $ x $):
\begin{equation}\label{jasnyodhad1}
-\int\limits_{0}^{\infty}\frac{\omega_{\varphi_u}(x,s)}{s^2}\;ds\leq C_{1} \min \left\{ 1, \frac{u^2}{x^2} \right\},
\end{equation}
\begin{equation}\label{jasnyodhad2}
-\int\limits_{0}^{\infty}\frac{\omega^{*}_{\varphi_u}(x,s)}{s^2}\;ds\leq C_{2},
\end{equation}
\begin{equation}\label{jasnyodhad3}
-\int\limits_{0}^{\infty}\frac{\omega^{*}_{-\varphi_u}(x,s)}{s^2}\;ds\leq C_{3}.
\end{equation}

To prove \eqref{psi2}, we can compute
$$
-\int\limits_{0}^{1}\frac{\omega_{\psi}^*(0,s)}{s^2}\;ds\geq \sum\limits_{i=10}^{\infty}\; \int\limits_{t_{i+1}}^{t_i}\frac{u_{i+1}}{s^2}\;ds
=\sum\limits_{i=10}^{\infty} \left(\frac{u_{i+1}}{t_{i+1}}-\frac{u_{i+1}}{t_i}\right)=\sum\limits_{i=10}^{\infty} \frac{1}{2(i+1)}=\infty.
$$
We prove \eqref{psi1} only for the most difficult case $x \in (0, t_{10}) $. We can write
$$ \omega_{\phi_i}(x,s) = \omega_{\varphi_{u_i}}(x-t_{i},s) $$
and
$$ -\int\limits_{0}^{\infty}\frac{\omega_{\phi_i}(x,s)}{s^2}\;ds\leq C_{1} \min \left\{ 1, \frac{u_{i}^2}{(x-t_{i})^2} \right\} . $$
Pick such a $ j \geq 10 $ that $ x \in [t_{j+1}, t_{j}) $. For $ 10 \leq i \leq j-1 $, we can compute $ t_i - x \geq t_{i} - t_{j} \geq t_{i} - t_{i+1} = 2^{-(i+1)} $, and thus
$$ \frac{u_{i}^2}{(x-t_{i})^2} \leq \frac{1}{i^2 2^{2i}} \cdot 2^{2(i+1)} = \frac{4}{i^2}. $$
The integral in \eqref{psi1} is then equal to
$$
\begin{aligned}
- \int\limits_{0}^{\infty}\sum\limits_{i=10}^{j-1} & \frac{\omega_{\phi_i}(x,s)}{s^2}\;ds
- \int\limits_{0}^{\infty}\frac{\omega_{\phi_j}(x,s)}{s^2}\;ds
- \int\limits_{0}^{\infty}\sum\limits_{i=j+1}^{\infty}\frac{\omega_{\phi_i}(x,s)}{s^2}\;ds \\
 & \leq C_1 \sum_{i=10}^{j-1} \frac{u_{i}^2}{(x-t_{i})^2} + C_1 + C_2 \leq C_1 \sum_{i=10}^{\infty} \frac{4}{i^2} + C_1 + C_2 =: C < \infty .
\end{aligned}
$$

Using Remark~\ref{remmostgeneral}, we show that the function $ \xi $ can not be the trace of a separately convex function.
Consider sequences $r_i=t_i=\frac{1}{2^i}$ and $p_i=\frac{3}{2}\cdot\frac{1}{2^i}$, $i\geq 10$.
Note that $\xi(r_i)=-u_i=-\frac{1}{i2^i}$ and since (using $i\geq 10$)
$$
t_i+u_i=\frac{1}{2^i}\left(1+\frac{1}{i}\right)\leq \frac{3}{2}\cdot\frac{1}{2^i}\leq \frac{1}{2^i}\left(2-\frac{2}{i-1}\right)=t_{i-1}-u_{i-1},
$$
we obtain that $\xi(-p_i)=0$.
Formula \eqref{mostgenfor} gives us
$$
\begin{aligned}
\infty>& -\sum_{i=10}^{\infty}\biggl[\xi(r_{i+1})\cdot\frac{p_i-p_{i+1}}{(p_i+r_{i+1})(p_{i+1}+r_{i+1})}+\xi(-p_{i})\cdot\frac{r_i-r_{i+1}}{(p_i+r_{i+1})(p_i+r_i)}\biggr]\\
=&\sum_{i=10}^{\infty}\frac{1}{(i+1)2^{i+1}}\cdot\frac{3}{2}\cdot\frac{\frac{1}{2^i}-\frac{1}{2^{i+1}}}{(\frac{3}{2}\cdot\frac{1}{2^i}+\frac{1}{2^{i+1}})\cdot(\frac{3}{2}\cdot\frac{1}{2^{i+1}}+\frac{1}{2^{i+1}})}\\
=& \sum_{i=10}^{\infty}\frac{1}{(i+1)}\cdot\frac{3}{2}\cdot\frac{1}{(3+1)\cdot(\frac{3}{2}+1)}=\sum_{i=10}^{\infty}\frac{3}{20}\cdot\frac{1}{(i+1)}\;,
\end{aligned}
$$
which is not possible.

It remains to show \eqref{odhadmaxpriklad}. First note that the integral in \eqref{odhadmaxpriklad} is trivially equal to $0$ for $x=0$.
To finish the proof, fix $x\not=0$.
The integral can be divided into two parts
$$
I_1:=-\int\limits_{0}^{|x|}\frac{\omega^{*}_{\xi}(x,t)}{t^2}\;dt\quad\text{and}\quad I_2:=-\int\limits_{|x|}^{\infty}\frac{\omega^{*}_{\xi}(x,t)}{t^2}\;dt.
$$
It remains to find constants $A$ and $B$ independent of $x$ such that $I_1\leq A$ and $I_2\leq B$.
One can show that $ I_1 \leq C_{3} $ for $ x < 0 $ and $ I_1 \leq 3C_{2} $ for $ x > 0 $.
Let $L$ be a Lipschitz constant of $\xi$ (we can take a Lipschitz constant of $ \varphi $).
Then we can estimate
$$
\begin{aligned}
\omega_{\xi}(x,t)=&\;\xi(x+t)+\xi(x-t)-2\xi(x)=\xi(t+x)-\xi(t-x)-2\xi(x)\\
\geq& -2L|x|-2\xi(x)\geq -4L|x|.
\end{aligned}
$$
Moreover, since the function $t\mapsto -4L|x|$ is non-increasing, we also obtain that $\omega^{*}_{\xi}(x,t)\geq-4L|x|$.
Now, we can write
$$
I_2\leq 4L|x|\int\limits_{|x|}^{\infty}\frac{1}{t^2}\;dt=\frac{4L|x|}{|x|}=4L,
$$
and we are done.

\end{example}

The last example illustrates that there is a concave function which is a trace, but does not satisfy the sufficient condition from Theorem~\ref{semiconvex}.
The idea of the construction is based on the observation that the integrals in condition (ii) from Theorem~\ref{concave} do not need to converge (locally) uniformly.

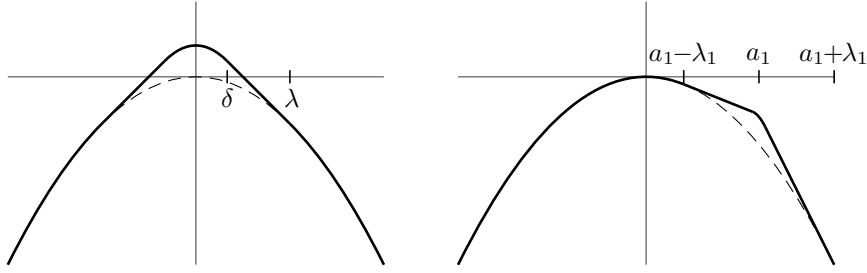
\begin{figure}[t]
\psset{unit=0.25mm}
\begin{pspicture}(-100,-100)(100,40)
\psset{linecolor=gray}
\psset{linewidth=0.6pt}
\psline(-100,0)(100,0)
\psline(0,-100)(0,40)
\psset{linecolor=black}
\psline(50,-3.5)(50,3.5)
\psline(16.66,-3.5)(16.66,3.5)
\psset{linewidth=0.4pt}
\parabola[linestyle=dashed](100,-100)(0,0)
\psset{linewidth=1pt}
\psbezier(50,-25)(66.66,-41.66)(83.33,-66.66)(100,-100)
\psbezier(-50,-25)(-66.66,-41.66)(-83.33,-66.66)(-100,-100)
\psline(50,-25)(16.66,8.33)
\psline(-50,-25)(-16.66,8.33)
\parabola(16.66,8.33)(0,16.66)
\put(16.66,-11){\makebox(0,0){$ \delta $}}
\put(51,-11){\makebox(0,0){$ \lambda $}}
\end{pspicture}
\hspace{7.5mm}
\begin{pspicture}(-100,-100)(100,40)
\psset{linecolor=gray}
\psset{linewidth=0.6pt}
\psline(-100,0)(100,0)
\psline(0,-100)(0,40)
\psset{linecolor=black}
\psline(20,-3.5)(20,3.5)
\psline(60,-3.5)(60,3.5)
\psline(100,-3.5)(100,3.5)
\psset{linewidth=0.4pt}
\parabola[linestyle=dashed](100,-100)(0,0)
\psset{linewidth=1pt}
\psbezier(-100,-100)(-60,-20)(-20,12)(20,-4)
\psline(20,-4)(56.72,-18.69)
\psline(100,-100)(63.28,-26.57)
\psbezier(56.72,-18.69)(58.91,-19.56)(61.09,-22.19)(63.28,-26.57)
\put(60,11){\makebox(0,0){$ a_{1} $}}
\put(20,12.5){\makebox(0,0){$ a_{1} \hspace{-3pt} - \hspace{-3pt} \lambda _{1} $}}
\put(100,12.5){\makebox(0,0){$ a_{1} \hspace{-3pt} + \hspace{-3pt} \lambda _{1} $}}
\end{pspicture}
\caption{(a) The function $ g_{\delta,\lambda} $ for the choice $ \delta = \frac{1}{6} $ and $ \lambda = \frac{1}{2} $.
(b) The function $ g_{1} $ for the choice $ a_{1} = \frac{3}{5} $ and $ \lambda_{1} = \frac{2}{5} $.}
\label{gecka}
\end{figure}

\begin{example}\label{example4}

For $1>\lambda\geq\delta>0$ define a function $g_{\delta,\lambda}:\er\to\er$ by (cf. Figure~\ref{gecka}(a))
$$
g_{\delta,\lambda}(x):= 
\begin{cases} 
-\frac{\lambda}{\delta}x^2+\lambda^2-\lambda\delta &\mbox{if } |x| \leq \delta, \\
-2\lambda|x|+\lambda^2 &\mbox{if } \delta<|x|\leq\lambda, \\
-x^2 & \mbox{if } |x|>\lambda. 
\end{cases} 
$$

The function $g_{\delta,\lambda}$ is even and an easy computation gives that $g'_{\delta,\lambda}(x)$ exists for $x>0$ and moreover
$$
\frac{g'_{\delta,\lambda}(x)}{x}= 
\begin{cases} 
-\frac{2\lambda}{\delta} &\mbox{if } 0<x \leq \delta, \\
-\frac{2\lambda}{x} &\mbox{if } \delta<x\leq\lambda, \\
-2 & \mbox{if } x>\lambda. 
\end{cases} 
$$
Therefore the function $x\mapsto\frac{g'_{\delta,\lambda}(x)}{x}$ is clearly non-decreasing.
It is easy to see that $g_{\delta,\lambda}$ is also always concave.
Moreover, for $x\geq \lambda,$
\begin{equation}\label{postpodm}
\begin{aligned}
\int\limits_{0}^{x}\frac{g_{\delta,\lambda}(t)-g_{\delta,\lambda}(0)}{t^2}\;dt&=
-\frac{\lambda}{\delta}\int\limits_{0}^{\delta}\frac{t^2}{t^2}\;dt+\int\limits_{\delta}^{\lambda}\frac{-2\lambda t+\lambda\delta}{t^2}\;dt
+\int\limits_{\lambda}^{x}\frac{-t^2+\lambda\delta-\lambda^2}{t^2}\;dt\\
&=-\lambda-2\lambda\log\left(\frac{\lambda}{\delta}\right)+(\lambda-\delta)-(x-\lambda)+\frac{(x-\lambda)(\delta-\lambda)}{x}\\
&\geq-\lambda-2\lambda\log\left(\frac{\lambda}{\delta}\right)+(\lambda-\delta)-(x-\lambda)+(\delta-\lambda)\\
&=-2\lambda\log\left(\frac{\lambda}{\delta}\right)-x.
\end{aligned}
\end{equation}

If we additionally assume that
\begin{equation}\label{deltalambda}
\delta=\frac{\lambda}{e^{\frac{1}{\lambda}}}\quad\text{which implies}\quad \log\left(\frac{\lambda}{\delta}\right)=\frac{1}{\lambda}
\end{equation}
we obtain
\begin{equation}
\int\limits_{0}^{x}\frac{g_{\delta,\lambda}(t)-g_{\delta,\lambda}(0)}{t^2}\;dt\geq-2-x,
\end{equation}
assuming $x\geq \lambda$.
Also,
\begin{equation}\label{lambda}
-\int\limits_{0}^{\lambda}\frac{g_{\delta,\lambda}(t)-g_{\delta,\lambda}(0)}{t^2}\;dt=2+\delta\geq 2.
\end{equation}
This in particular means, using Lemma~\ref{extension}, that there is a separately convex function $f_{\delta,\lambda}$ on $\er^2$ 
such that $f_{\delta,\lambda}(t,t)=g_{\delta,\lambda}(t)$ for every $t\in\er$.
Moreover, it will have the property that
\begin{equation}\label{horniodhad}
f_{\delta,\lambda}(u,v)\leq f_{\delta,\lambda}(0,0)-x\int\limits_{0}^{x}\frac{g_{\delta,\lambda}(t)-g_{\delta,\lambda}(0)}{t^2}\;dt\leq 1+2x+x^2
\end{equation}
whenever $|u|\leq x$, $|v|\leq x$ and $\lambda\leq x$.

Fix sequences $\{\delta_n\}$ and  $\{\lambda_n\}$ satisfying \eqref{deltalambda} and a sequence $\{a_n\}\subset[0,1]$, $a_n\searrow 0$, satisfying $a_n-\lambda_n>a_{n+1}+\lambda_{n+1}$.
Define functions
$f_n:\er^2\to\er$, $g_n:\er\to\er$, $n\in\en$, $f:\er^2\to\er$ and $g:\er\to\er$ by (cf. Figure~\ref{gecka}(b))
$$
g_n(x):=g_{\delta_n,\lambda_n}(x-a_n)-2a_nx+a_n^2,
$$

$$
f_n(x,y):=f_{\delta_n,\lambda_n}(x-a_n,y-a_n)-a_n(x+y)+a_n^2,
$$

$$
g(x):=\sup_n g_n(x)\quad\text{and}\quad f(x,y):=\sup_n f_n(x,y).
$$
Note that $f_n$ is a separately convex function such that $f_n(t,t)=g_n(t)$, $t\in\er$, which then implies $f(t,t)=g(t)$, $t\in\er$.
Also $g_n(x)=-x^2$ for $x\not\in[a_n-\lambda_n,a_n+\lambda_n]=:I_n$ and, since $I_n\cap I_m=\emptyset$ for $n\neq m$, $g$ is concave (because it is locally concave due to the concavity of every $g_n$).

We need to show that $f$ is finite at every $(u,v)\in\er^2$, but this is easy since using \eqref{horniodhad} we obtain
$$
\begin{aligned}
f_n(u,v)&=f_{\delta_n,\lambda_n}(u-a_n,v-a_n)-a_n(u+v)+a_n^2 \\
 &\leq 1+2x+x^2+2a_n(x-a_n)+a_n^2\leq 2+4x+x^2,
\end{aligned}
$$
provided $|u-a_n|\leq x,|v-a_n|\leq x$ and $\lambda_{n}\leq x$.

It remains to prove that $g$ is not semi-convex with any modulus $\omega$ satisfying
\begin{equation}\label{modulpodm}
\int_0^1\frac{\omega(t)}{t}\;dt<\infty.
\end{equation}

So suppose that $g$ is semi-convex with such a modulus $\omega$.
This gives us
$$
g(\alpha x+(1-\alpha)y)\leq \alpha g(x)+(1-\alpha)g(y)+\alpha(1-\alpha)|x-y|\omega(|x-y|)
$$
which can be for $\alpha=\frac{1}{2}$ rewritten as
\begin{equation}
-\frac{g(x)+g(y)-2g\left(\frac{x+y}{2}\right)}{|x-y|^2}\leq \frac{\omega(|x-y|)}{2|x-y|}.
\end{equation}
Considering $x=a_n+t$ and $y=a_n-t$ we then obtain
\begin{equation}
-\frac{g(a_n+t)+g(a_n-t)-2g(a_n)}{4t^2}\leq \frac{\omega(2t)}{4t}.
\end{equation}
This implies that, using \eqref{lambda},
$$
\begin{aligned}
\int_0^{2\lambda_n}\frac{\omega(t)}{t}\;dt&=2\int_0^{\lambda_n}\frac{\omega(2t)}{2t}\;dt\\
&\geq -\int_0^{\lambda_n}\frac{g(a_n+t)+g(a_n-t)-2g(a_n)}{t^2}\;dt\geq 2+2
\end{aligned}
$$
for every $n$, which is a contradiction with \eqref{modulpodm}.
\end{example}

\end{document}